\DeclareMathAlphabet{\mathpzc}{OT1}{pzc}{m}{it}
\newtheorem{te}{Theorem}[section]
\newtheorem{os}[te]{Remark}
\newtheorem{prop}[te]{Proposition}
\newtheorem{lem}[te]{Lemma}
\newtheorem{ex}[te]{Example}
\numberwithin{equation}{section}
\def \l { \left( }
\def \r {\right) }
\def \ll { \left\lbrace }
\def \rr { \right\rbrace }
\def \mfB { \mathfrak{B} }
\newcommand{\bt}[1]{{\textcolor{black}{#1}}}
\newcommand{\rev}[1]{{\textcolor{black}{#1}}}
\newcommand{\revbt}[1]{{\textcolor{black}{#1}}}
\begin{document}

	\title[]{From semi-Markov random evolutions to scattering transport and superdiffusion}
	\author[]{Costantino Ricciuti}
	\address[]{Dipartimento di Scienze Statistiche, Sapienza - Universit\`{a} di Roma}
	\email[]{costantino.ricciuti@uniroma1.it}
	\author[]{Bruno Toaldo}
		\address[]{Dipartimento di Matematica ``Giuseppe Peano'' - Universit\`{a} degli Studi di Torino, Via Carlo Alberto 10 - 10123, Torino (Italy)}
	
	\email[]{bruno.toaldo@unito.it}
	\keywords{Random evolutions, semi-Markov processes, linear Boltzmann equation, Bochner subordination, continuous time random walks, superdiffusion}
	\date{\today}
	\subjclass[2010]{60K15, 60K40, 60G50, 35Q20}

		\begin{abstract}
We here study random evolutions on Banach spaces, driven by a class of semi-Markov processes. The expectation (in the sense of Bochner) of such evolutions is shown to solve some abstract Cauchy problems. Further, the abstract telegraph (damped wave) equation is generalized to the case of semi-Markov perturbations.
A special attention is devoted to semi-Markov models of scattering transport processes which can be represented through these evolutions. In particular, we consider random flights with infinite mean flight times which turn out to be governed by a semi-Markov generalization of a linear Boltzmann equation; their scaling limit is proved to converge to superdiffusive transport processes.
\end{abstract}

	\maketitle

\tableofcontents

\section{Introduction}
In \cite{kac}, Mark Kac observed that if one considers a particle moving on a straight line at velocity $v$ and reversing direction at Poisson paced times, then this motion is governed by a pair of partial differential equations (PDEs). Most notably, he also realized that these two PDEs can be merged in the so-called telegraph (or damped waves) equation
\begin{align}
\partial^2_tq   +2\theta \partial_tq = v^2 \partial^2_xq.
\end{align}
The process above is called the telegraph process and, to the best of our knowledge, the first remarks in the modern mathematical literature appears in Goldstein \cite{goldstein}.

The $n$-dimensional version of such a process is the isotropic (Markovian) transport process (e.g., \cite{monin, Papanicolaou, Watanabe}). This is the uniform motion of a particle that chooses a new  direction with uniformly distributed angles, at any jump times of a Poisson process. The position-velocity density function solves a linear Boltzmann equation, (e.g., \cite{Watanabe}); moreover, by central limit arguments, a Brownian motion arises in the limit of large times and rapid jumps.

The above mentioned transport process can be seen in an abstract way: there is a running evolution (e.g., translation at velocity $v$) that changes mode of evolving (e.g., translation at velocity $v^\prime$) after exponentially distributed waiting times. This abstract idea led Griego and Hersh \cite{griegohersh} to formulate  the notion of Random Evolutions (see also \cite{Pinsky} and references therein).  Indeed one can imagine that there is a phenomenon whose instantaneous state is represented  by an element $u$ of a Banach space $\mfB$. The modes of time evolution are given by semigroups $\l T_v(t) \r_{t \geq 0}$, $v \in S$, on $\mfB$, and there is a random mechanism (e.g., a Markov chain $V(t)$ on $S$) which changes the mode of evolution  from $T_v(t)$ to $T_{v^\prime}(t)$ after an exponentially distributed waiting time. The authors realized the connection of these random evolutions with (systems of) abstract equations, for $v \in S$, ($S$ finite)
\begin{align*}
\partial_t q_v = G_v q_v + \theta_v\sum_{v^\prime \in S}  \l q_{v^\prime} - q_v\r h_{vv^\prime},  
\end{align*}
where $G_v$ generates $T_v(t)$ and $h_{vv^\prime}$ is the probability that a random jump of $V(t)$ starts from $v$ and arrives to $v^\prime$. The general formulation, for uncountable $S$, is
\begin{align}
\partial_t q_v = G_v q_v + \theta_v\int_{S}\l q_{v^\prime}-q_v \r h_v(dv^\prime),  
\label{eq primo ordine}
\end{align}
which reduces to a Boltzmann-type equation when $G_v = v \cdot \nabla_x$ on a suitable Banach space (see \cite[Corollary 3.1]{koro} for the general statement).
 Moreover, when $S= \ll v, -v \rr$,  they established that the PDEs above can be combined in the abstract telegraph equation
\begin{align}
\partial^2_tq  + 2\theta \partial_t q = v^2 G_v^2q, \label{eq secondo ordine}
\end{align}
where $q=1/2(q_v+q_{-v})$.

\bt{Exponential waiting times are typical in several physical systems, but this pattern can be distorted in many situations. From the probabilistic point of view this means that it is useful to relax the Markov assumption (with exponential time intervals)} in order to allow arbitrarily distributed waiting times between different
modes of evolution. Thus Korolyuk and Swishchuk had the idea of having $V(t)$ a semi-Markov process and they developed the theory of the so-called semi-Markov Random Evolutions  (see \cite{koro} and references therein). However, since in this case the Markov property is lost, the classical connection to abstract Cauchy problems is no longer true and a new theory in this sense has not yet been developed.

\rev{This paper makes a remarkable progress in this direction. Indeed, in Theorem \ref{teeqf} we find an abstract equation representing the semi-Markov counterpart of eq. \eqref{eq primo ordine} and therefore, it consists in a generalization of the abstract (linear) Boltzmann-type equation . We will also find the semi-Markov analogue of equation \eqref{eq secondo ordine} for the two-state model.}
It turns out that these equations are non-local in the time variable, as a consequence of the memory effect induced by the semi-Markov perturbation.

An application of the above theory gives us the possibility to develop a semi-Markov model of scattering transport: we consider a  semi-Markov version of the isotropic transport process,  i.e., whose flight times are not exponentially distributed. 
If such flight times have finite mean and variance, then this process is again an approximation of a Brownian motion, just like in the Markov case. Instead, the asymptotic behaviour in case of infinite mean and variance is more complicated and is not included in the limit theorems developed so far. 

Therefore we consider a random flight process whose flight times have infinite expectation and belong to the domain of attraction of a stable law.
First we show that this model of scattering transport is described by an integro-differential equation exhibiting a pseudo-differential operator in both space and time variables; such equation represents the  semi-Markov counterpart of the linear Boltzmann equation holding for the Markov flights.
We show that a suitable scaling of our transport process converges (in distribution) to a transport process with superdiffusive behaviour. At time $t$ this process is supported on the $d$-dimensional ball centered in the starting point and with radius $t$. Superdiffusive means that the mean square displacement of the limit process spreads, when $t \to \infty$, as $Kt^\gamma$, with $\gamma>1$ and $K>0$. In our case we will find that $\gamma=2$. This last result is obtained by adapting the limit theorems for coupled continuous time random walks developed in \cite{meer annals probab, meerstra}. It is noteworthy that the limit process is still a scattering transport process, performing, on any finite interval of time, a countable infinity of displacements shorter than $\epsilon>0$ and a finite number longer than $\epsilon$, for some $\epsilon>0$.
We stress that superdiffusion is empirically observed in many physical systems, like     turbulent diffusion, quantum optics, bacterial motions and many others (see \cite{Metzler} and references therein for an overview on this subject).

\section{Assumptions and preliminaries}
We briefly introduce here the foundation of the theory which will be used throughout the paper and we outline the basic assumptions under which our theory takes shape. We also here establish the notations used in the whole manuscript.
\subsection{Markov and Semi-Markov random evolutions} 
\label{sec21}
We refer to \cite{griegohersh, koro, Pinsky} for the basic theory.
Let $V(t)$ be a regular stepped semi-Markov process in the sense of \cite[Chapter 1]{koro}. Hence let $\l S, \mathfrak{S} \r$ be a metric space and let $ v_n$, $n \in \mathbb{N}$, be a discrete-time Markov chain on it which is embedded in $\l V(t), t \geq 0 \r$. The transition probabilities will be denoted as 
\begin{align}
\mathfrak{S} \ni E \mapsto h_v(E) := P^v \l v_1 \in E  \r    = P \l v_n \in E \mid v_{n-1} = v \r.
\end{align}
 Let $J_n$, $n \in \mathbb{N}$, be a sequence of non-negative r.v.'s with the distribution, for any $n \in \mathbb{N}$,
\begin{align}
F_v(w):= P^v \l J_1 \leq w \r:=P \l J_1 \leq w \mid v_1=v \r = P \l J_n \leq w \mid v_{n} = v \r.
\label{defF}
\end{align}
We will assume that \eqref{defF} are absolutely continuous, for any $v \in S$, and we will denote $g_v(w)$ a density. Further we will use the notation $\overline{F}_v(w):=1-F_v(w)$.
 Let $\tau_0:=0$ and $\tau_n:= \sum_{i=1}^n J_i$ for $n \in \mathbb{N}$. Hence denote
\begin{align}
V(t) = v_n \qquad   \tau_{n-1} \leq t  < \tau_{n}, \qquad n \in \mathbb{N}.
\label{defVntc}
\end{align}
The assumption of $V$ being regular means that it doesn't accumulate an infinity of jumps, i.e., if we define $N(t):= \max \ll n \in \mathbb{N} : \tau_n \leq t \rr$ we have $N(t)<\infty$, $P^v$-a.s. for any $v \in S$ and any $t>0$.

Now let $\mathcal{T}(t)$ be a semi-Markov random evolution in the sense of \cite{koro}. Hence, for each $v \in S$ let $\l T_v(t), t \geq 0 \r$, be a family of operators and assume that it forms a strongly continuous semigroup on a Banach space $\l \mfB, \left\| \cdot \right\| \r$. Now define the random operator on $\mfB$
\begin{align}
\mathcal{T}(t) \, := \, T_{V(t)}(t-\tau_{N(t)}) \cdots T_{v_2}(J_2)T_{v_1}(J_1).
\label{deft}
\end{align}
In the framework introduced in \cite{koro} the operator $\mathcal{T}(t)$ is called a `continuous' semi-Markov random evolution (see \cite[Def 3.2]{koro}).
Denote $\l G_v, \mathfrak{B}_0 \r$ the generators of $T_v(t)$ and suppose that $\mathfrak{B}_0  \subset \mfB$ is the common domain of definition of the operators $G_v$.
We remark that \eqref{deft} has the (stochastic) representation (see \cite[Lemma 3.1]{koro})
\begin{align}
\mathcal{T}(t) - \mathds{I} \, = \, \int_0^t G_{V(s)} \, \mathcal{T}(s) \, ds
\label{reprsemicont}
\end{align}
which must be meant on $\mathfrak{B}_0$, $\mathds{I}$ denoting the identity operator.

One of the most important objects in this paper is the mean value of a semi-Markov random evolution, i.e., for a function $u:S \mapsto \mfB$, the mapping
\begin{align}
 t \mapsto q_v(t) \, : = \, \mathds{E}^v \mathcal{T}(t)  u(V(t)) \in \mfB, \qquad t \geq 0,
\label{meanvalue}
\end{align}
where integration $\mathds{E}^v$ is meant in the Bochner sense.

If the $\l J_n, n \in \mathbb{N} \r$ are such that \eqref{defF} is the cdf of an exponential distribution with parameters $\theta_v$ then the process $V$ is a continuous time Markov chain and the operator $\mathcal{T}(t)$ in \eqref{deft} defines a Markov evolution in the sense of \cite{griegohersh, Pinsky}. In this case we will denote the process with $W(t)$ in place of $V(t)$; one can prove (e.g., \cite[Theorem 2]{griegohersh} for finite $S$ or \cite[Corollary 3.1]{koro} for general $S$) that $q_v(t)$ satisfies
\begin{align}
\partial_t q_v(t) \, = \, G_v q_v(t) + \theta_v \int_S \l q_{v^\prime}(t) - q_v(t) \r h_v(dv^\prime), \qquad q_v(0)=u(v).
\label{boltzmann}
\end{align}
Eq. \eqref{boltzmann} has the same form of a linear Boltzmann equation; indeed it reduces to a linear Boltzmann equation in case the evolution is given by translation semigroups on $\mathbb{R}^d$ at velocity $v$ ($G_v = v \cdot \nabla_x$), for an appropriate choice of $h_v$. We remark that $G_v$ and the integral operator on the right hand side of \eqref{boltzmann} act upon different variables and that the equation is meant on $\mfB_0$.

\subsection{Assumptions} 
\label{assumptions}
From now on we consider a special class of semi-Markov random evolutions, which is defined by the following assumptions.
\begin{itemize}
\item[A1)] For any $v\in S$, the family  $\l T_v(t), t \in \mathbb{R} \r$ forms a strongly continuous group of operators on $\l \mathfrak{B}, \left\| \cdot \right\| \r$,  such that  $\left\|T_v(t)u \right\|\leq \left\| u \right\|$ for all $t \in \mathbb{R}$ and $u \in \mfB$. 

\item[A2)] The semi-Markov process $V(t)$ is constructed as a continuous time Markov chain time-changed by the inverse of an independent driftless subordinator with infinite activity (i.e., a strictly increasing pure jump L\'evy process).
\end{itemize}
\textit{In Sections \ref{secre} and \ref{secdw} the assumptions A1) and A2) will be always considered fulfilled, without needing to specify it further.}
\subsubsection{Discussion}
We now discuss assumptions A1) and A2). Moreover, we introduce and discuss some further minor technical assumptions which will be sometimes requested (saying it expicitly).

First note that $A1)$ includes the remarkable case where $T_v$ represents a translation in  $\mathbb{R}^d$ at velocity $v$ (on suitable function spaces like $L^1 \l \mathbb{R}^d \r$ or $C_0 \l  \mathbb{R}^d\r$) and many others.
Since we work under the assumptions that the family $\l T_v(t) \r_{t \in \mathbb{R}}$ forms a group for any fixed $v$ we have that $-G_v$ generates $\l T_v(-t) \r_{t \geq 0}$ as well as $\l G_v \r$ generates $\l T_v(t)\r_{t \geq 0}$ in the sense of semigroups (see, for example, \cite[Section 3.11]{engelnagel}).

We now explain in detail the time-change construction for $V(t)$ to which we refer in A2).
Let $\sigma (t)$ be a subordinator, i.e., a one-dimensional L\'evy process with non-decreasing sample paths.  Its distribution is defined, for $\lambda >0$, by
\begin{align}
\mathds{E}e^{-\lambda \sigma (t)}= e^{-tf(\lambda)},
\end{align}
where $f(\lambda)$ is a Bernstein function (see more on subordinators in \cite{bertoins, librobern}). Hence $f(\lambda)$ has the form
\begin{align}
f(\lambda)= a+b\lambda+\int _0^\infty (1-e^{-\lambda s}) \nu (ds)
\label{reprbern}
\end{align}
where $a,b$ are non-negative constants and $\nu$ a sigma-finite measure fulfilling the integrability condition 
\begin{align*}
\int_0^\infty (s\wedge 1) \nu (ds) <\infty.
\end{align*}
The measure $\nu$ is said to be the L\'evy measure of $\sigma (t)$.
We here assume that $a=0$, which implies that $P(\sigma (t)<\infty)=1$ for all $t>0$ and that $b=0$, which implies that $\sigma (t)$ is a pure jump process with no drift. Since there is not drift, in order to require that the process $\sigma$ is strictly increasing we assume that $\nu(0, +\infty)=\infty$, i.e., the process has infinite activity.
Now, let 
\begin{align}
L(t)= \inf \{x>0: \sigma (x)>t\} \label{inverso}
\end{align}
be the inverse process of $\sigma (t)$. 
Moreover, let $\l W(t), t \geq 0 \r$ be a continuous time Markov chain on $S$, independent on \eqref{inverso}, which is completely defined by the embedded chain $( v_n, n\in \mathbb{N})$ and by waiting times having  exponential distribution with parameter $\theta_v$ such that 
\begin{align}
\sup_v \theta_v < \infty.
\label{sufin}
\end{align}
Condition \eqref{sufin} implies that $W(t)$ is non-explosive. In other words the process $W(t)$ is the Markov chain
\begin{align}
W(t) = v_n, \qquad \bar{\tau}_{n-1} \leq t < \bar{\tau}_{n}, \qquad n \in \mathbb{N}.
\end{align}
Hence, assumption A2) means that our semi-Markov process is defined by the following time-change
\begin{align}
\l V(t) \r_{t\geq 0}= \l W(L(t))\r_{t\geq 0}, \label{processo semi markov}
\end{align}
which means
\begin{align}
V(t) = W(s), \qquad \sigma(s-) \leq t < \sigma(s).
\label{tcq}
\end{align}
It is easy to see from \eqref{tcq} that the epochs (jump times) $\l \tau_n, n \in \mathbb{N} \r$ of $V(t)$ are a transformation of $\l \bar{\tau}_n,n \in \mathbb{N})\r$, i.e., $\tau_n = \sigma (\bar{\tau}_n-)$ a.s. However, by a simple conditioning argument, using independence and the fact that $\sigma(t)$ has no fixed discontinuity, i.e., $\sigma(t)-\sigma(t-)=0$, a.s., one has 
\begin{align}
\mathds{E}e^{-\lambda \sigma(\bar{\tau}_n-)} = \mathds{E} \mathds{E}\left[ e^{-\lambda \sigma(\bar{\tau}_n-)} \mid \bar{\tau}_n \right] = \mathds{E}e^{-\lambda \sigma(\bar{\tau}_n)}.
\end{align}
It follows that \eqref{defVntc} can be rewritten as
\begin{align}
V(t) = v_n \qquad \sigma(\bar{\tau}_{n-1}) \leq t < \sigma(\bar{\tau}_{n}), \qquad n \in \mathbb{N}.
\end{align}
Hence \eqref{processo semi markov} is characterized by the same embedded Markov chain $\{v_n\} _{n\in \mathbb{N}}$ of $W(t)$ but it  exhibits new waiting times $J_n$ such that 
\begin{align}
J_n= \tau_{n}-\tau_{n-1}= \sigma (\bar{\tau}_{n})-\sigma(\bar{\tau}_{n-1}), \qquad n \in \mathbb{N}. \label{intertempi}
\end{align}
By stationarity of increments of subordinators and since $\bar{\tau}_{n}-\bar{\tau}_{n-1}$ are exponentially distributed it is clear that
\begin{align}
F_v(w) =& P \l \sigma (\bar{\tau}_{n}-\bar{\tau}_{n-1}) \leq w \mid v_{n}=v\r \notag \\
= \, & 1- P \l \sigma(\bar{\tau}_{n}-\bar{\tau}_{n-1}) > w \mid v_{n} = v \r \notag \\
= \, & 1- P \l \bar{\tau}_{n}-\bar{\tau}_{n-1} > L(w) \mid v_{n}=v \r \notag \\
= \, & 1-\mathds{E} e^{-\theta_v L(w)}.
\label{217}
\end{align}
Since we assume that $F_v(w)$ has a Lebesgue density we will consider only subordinators whose one-dimensional marginal has a Lebesgue density. We will denote the density of $\sigma(t)$ with the symbol $\mu_t(w)$, i.e., 
\begin{align}
P \l \sigma(t) \in dw \r \, = \, \mu_t(w) dw.
\label{219}
\end{align}

A further quantity which is typical of semi-Markov processes is the \revbt{density defined by
\begin{align}
\mathpzc{h}_v(t)  \, dt \, := \, P^v \l \cup_{n } \ll \tau_n \in dt \rr \r,
\label{rendensgeneral}
\end{align}
which gives the probability that there is at least one jump during $dt$.} 
In what follows we will assume that
\begin{align}
t\mapsto \mathpzc{h}_v(t) \text{ is in } L^1_{\text{loc}} \l \mathbb{R}^+ \r,
\label{223}
\end{align}
for any $v$ \revbt{and (since we are working with non-explosive processes) also $\mathpzc{h}_v(t)= \sum_n \mathpzc{h}_v^n(t) := \sum_n P^v \l \tau_n \in dt \r/dt$}.
It is immediate to compute \eqref{rendensgeneral} whenever the process $N(t)$ is a renewal counting process, i.e., $\theta_v=\theta$, for some $\theta>0$. In this case the function $\mathpzc{h}_v(w)$ is the renewal density of $N(t)$ in the classical sense of renewal theory (e.g. \cite[page 26]{cox}) \revbt{and we have that}
\begin{align}
\mathpzc{h}_v(t)= \lim_{\Delta t \to 0}\frac{\mathds{E}^v \l N(t+\Delta t) - N(t) \r}{\Delta t}.
\label{rendensdef}
\end{align}
Since the process $N(t)$ is non-explosive the probability to have more than one jump in the interval $\Delta t$ is $o(\Delta t)$, and therefore the numerator of \eqref{rendensdef} can be interpreted as the probability of having one, or more, jump in the interval $\Delta t$. \revbt{In our case the computation can be conducted by exploiting the time-change construction, as follows.} 
By \eqref{219} we have that the renewal measure for our subordinators
\begin{align}
u^f(dw) := \mathds{E} \int_0^\infty \mathds{1}_{\left[ \sigma (t) \in dw \right]} dt
\label{potmeassub}
\end{align}
always has a density
\begin{align}
u^f(w):= \int_0^\infty \mu_s(w) ds.
\label{220}
\end{align}
This density is proportional to the renewal density $\mathpzc{h}_v(w)$ of the renewal counting process $N(t)$. Indeed,
\begin{align}
\mathpzc{h}_v(w)= \partial_w \mathds{E}N(w) \, = \, \partial_w \mathds{E}\mathcal{N}(L(w)) \, = \, \partial_w\theta \mathds{E}L(w)
\end{align}
where $\mathcal{N}$ is a Poisson process independent on $L(w)$. Since
\begin{align}
\mathds{E}L(w) \, = \, \mathds{E} \int_0^\infty \mathds{1}_{\ll \sigma (t) \leq w \rr}dt
\end{align}
it follows that $\mathpzc{h}_v(w)=\theta u^f(w)$ for any $v$. Further the density $u^f(w)$ is clearly in $L^1_{\text{loc}}\l \mathbb{R}^+ \r$.

\subsection{PDEs connection}
There is a wide literature considering processes of assumption A2) and their generalizations, in particular on the governing equations \cite{fracCauchy, baemstra, costaluisa, toniazzi, kololast, kochu,  magda, meerbounded, meertoa, orsrictoapota, orsrictoasemi, costafede, pierre, toaldopota, toaldodo}, their interplay with telegraph-type equations \cite{dovtel, orstel}, their interpretation as scale limit of CTRWs \cite{meer annals probab, KoloCTRW, meertri, meer phys, meerpoisson, meerstra, rictoa, strakavariable}, their distributional and path properties \cite{ascione, pierremladen, savtoa} and applications in different fields \cite{nicos, hairer, Metzler, scalas}.
In our scenario we can outline, heuristically, the PDEs connection as follows. Consider the fractional derivative $^MD_t^\alpha$ defined in the sense of Marchaud (also called generator definition \cite[eq. (2.15) page 30]{FCbook})
\begin{align}
^MD_t^\alpha u(t) \, = \, \frac{\alpha}{\Gamma (1-\alpha)} \int_0^\infty \l u(t)-u(t-s) \r s^{-\alpha -1} ds
\label{genform}
\end{align}
where $u(t-s)$ is meant to be zero as $t-s<0$. Note that in order to get from \eqref{genform} the canonical fractional derivative $D_t^\alpha$ one has to regularize \eqref{genform} as
\begin{align}
D_t^\alpha u(t) \, = \, \frac{\alpha}{\Gamma (1-\alpha)} \int_0^\infty \l u(t)-u(t-s) \r s^{-\alpha -1} ds -  \frac{s^{-\alpha}}{\Gamma (1-\alpha)} u(0).
\label{fracreg}
\end{align}
As the measure $\nu(ds)=\alpha s^{-\alpha-1}ds/\Gamma (1-\alpha)$ is the L\'evy measure associated with the Bernstein function $f(\lambda) = \lambda^\alpha$ and $s^{-\alpha}/\Gamma (1-\alpha) = \nu(s, \infty) =: \bar{\nu}(s)$, the operator \eqref{fracreg} can be easily generalized to an arbitrary Bernstein function $f$ with L\'evy measure $\nu(\cdot)$ as
\begin{align}
D_t^f u(t) \, = \, &  \, ^MD_t^f u(t) - \bar{\nu}(t) u(0)  \notag \\
= \, & \int_0^\infty \l u(t) - u(t-s) \r \nu(ds) - \bar{\nu} (t) u(0).
\label{226}
\end{align}
The mean value 
\begin{align*}
\pi(v,t)=\mathds{E}^v  u(V(t)), \qquad u\in C_0 (\mathbb{R}),
\end{align*}
is known to solve \rev{(see, in particular, \cite[Section 4]{KoloCTRW} or, for a slightly different approach, \cite[Section 5]{kochu}) the following equation}
\begin{align}
D_t^f \pi (v,t)  =  \theta_v\int _S( \pi (v',t)-\pi(v,t)) h _v(dv'), \qquad \pi(v,0)= u(v).\label{eq V(t)}
\end{align}
Clearly (\ref{eq V(t)}) is a generalization of  the Kolmogorov backward equation holding in the Markov case
\begin{align*}
\partial_t \pi (v,t)  = \theta_v \int _S( \pi (v',t)-\pi(v,t)) h _v(dv'), \qquad \pi(v,0)= u(v),
\end{align*}
whose unique solution is $\pi (v,t)= \mathds{E}^v u(W(t))$.

\section{Boltzmann-type equations}
\label{secre}
In this section we derive rigorously the governing equation for the expectation function $q_v(t)$ defined in \eqref{meanvalue}. Hence we should make rigorous the following assertion: the function $q_v(t)$ satisfies, on $\mfB$,
\begin{align}
f \l \partial_t-G_v  \r \l q_v(t) - T_v(t) q_v(0) \r\, = \, \theta_v\int_S \l q_{v^\prime} (t)- q_v(t) \r h_v(dv^\prime), 
\label{oureq}
\end{align}
subject to $q_v(0) = u(v)$, where $f$ is the Laplace exponent defined in \eqref{reprbern}. 
Whenever $f(\lambda) = \lambda$ (hence there is no time-change and $V(t)$ is Markovian) one recovers, formally, the linear Boltzmann-type equation
\begin{align}
\partial_t q_v(t) - G_v q_v(t)\, = \,  \theta_v\int_S \l q_{v^\prime} (t)- q_v(t) \r h_v(dv^\prime).
\end{align}
In this section we proceed as follows. In Section \ref{secdeff} we address the problem of defining the operator $f \l \partial_t - G_v  \r$ appearing in the left hand side of \eqref{oureq}; as $f$ is a Bernstein function, we take inspiration from the theory of Bochner subordination, and the corresponding functional calculus, whose basic facts will be outlined at the beginning of Section \ref{secdeff} (the reader can consult \cite[Chapter 12]{librobern} or \cite[Chapter 2]{jacob} for a thorough discussion on Bochner subordination). Then, in Section \ref{secexp}, we obtain some technical properties of $q_v(t)$ which will be needed throughout the paper.

\subsection{The operator $f \l \partial_t - G_v  \r$ through Bochner subordination}
\label{secdeff}
Take a family of operators $\l T_t \r_{t \geq 0}$ and suppose that it forms a strongly continuous contraction semigroup on $\mfB$ and define
\begin{align}
Au:=\lim_{t \to 0+} \frac{T_tu-u}{t} 
\label{defgen}
\end{align}
the generator of $T_t$ on the domain Dom$(A)$, i.e.,
\begin{align}
\text{Dom}(A):=\ll u \in \mfB : \text{ the limit  \eqref{defgen} exists as strong limit} \rr.
\end{align}
Then take a subordinator $\sigma(t)$, $t\geq 0$, with Laplace exponent $f$, and define the family $( T_t^f )_{t \geq 0}$ as
\begin{align}
T_t^f u : = \int_0^\infty T_s u \, P \l \sigma(t) \in ds \r
\label{bochsub}
\end{align}
where the integral \eqref{bochsub} is meant as a Bochner integral. The Phillips' theory states that the family $T_t^f$ is still a semigroup (this comes from Markov property of $\sigma (t)$) and that the generator of $T_t^f$ is given by the operator $ A^f $ (with domain Dom$(A^f)$) such that
\begin{align}
A^fu \big|_{\text{Dom}(A)} \, = \, -f (-A) u \, := \, \int_0^\infty \l T_s u -u \r \nu(ds)
\label{phil}
\end{align}
and, in general, $\text{Dom}(A^f) \supset \text{Dom}(A)$ (for all the assertions above, see \cite[Propositions 12.1, 12.5 and Theorems 12.6]{librobern}). \rev{We observe that formula \eqref{phil} uses the representation \eqref{reprbern} for the Bernstein function $f$ (with $a=b=0$) and the basic definition of pseudo-differential operators.}

We note that, on functions $L^1 \l [0,T]; \mathbb{R} \r$, for any $T>0$, the operator $^MD^f_t$ defined in \eqref{226} can be interpreted by means of the above theory.
Indeed, if we define the family of operators $\l \Gamma_t \r_{t \geq 0}$ as
\begin{align}
\Gamma_s u (t) \, = \, \begin{cases} u(t-s), \qquad &s \leq t, \\ 0, & s > t,
\end{cases}
\label{defgamma}
\end{align}
then it is well known that this family forms a strongly continuous contraction semigroup on $L^1\l [0,T]; \mathbb{R} \r$, for $T>0$. Hence one might be tempted to write, in the spirit of Bochner subordination, $^MD_t^f u (t)=f \l -A^\Gamma \r$, where $A^\Gamma$ denotes the generator of $\Gamma_t$. However the generator of the (killed) translation $\Gamma_s$ is defined on functions that are differentiable, in appropriate sense, and such that $u(0)=0$. So we apply Phillips' formula as in \eqref{phil} to the function $u^0(t):=u(t)-u(0)$ and this yields the representation
\begin{align}
D_t^f u (t) \,  = \,& -\int_0^\infty \l  u(t-s) \mathds{1}_{[s \leq t]} -u(t)\r \, \nu (ds) - \bar{\nu}(t) u(0) \notag \\
= \, &- \int_0^\infty \l \Gamma_s u^0(t) - u^0(t) \r \nu(ds) \notag \\
= \, & f \l -A^\Gamma \r \l u(t)-u(0) \r
\label{spieg}
\end{align}
and the integral makes sense as a Bochner integral because of \eqref{phil}, under suitable assumptions on $u(t)$.

In this paper we consider a generalization of \eqref{spieg} on $L^1 \l [0,T]; \mfB \r$, $T>0$, i.e., the space of functions $[0,T] \ni t \mapsto u(t) \in \mfB$ with finite $L^1$ norm $\int_0^T \left\| u(t) \right\|dt$. \rev{The reader can consult [28, Section 5.13] for a general theory of semigroups acting on Banach-valued functions (see in particular Proposition 5.13.1 and Theorem 5.13.1 of which the forthcoming Lemmas 3.1 and 3.3 are analogs for translations on $L^1 \l [0,T], \mathfrak{B} \r$).}

Indeed consider the operators
\begin{align}
\Gamma_s u (t) \, = \, \begin{cases} u(t-s), \qquad &s \leq t, \\ \bm{0}, & s > t,
\end{cases}
\label{defgammagen}
\end{align}
on $L^1 \l [0,T]; \mfB \r$, $T>0$.
It turns out that this family is a strongly continuous contraction semigroup, as it is outlined in the forthcoming result; in the following the derivative is meant on absolutely continuous functions $u:[0, T] \mapsto \mathfrak{B}$ with the representation
\begin{align}
u(t)-u(0) = \int_0^t u^\prime (s) ds
\label{reprac}
\end{align}
for any $t \in [0,T]$.
We remind that in this framework a function $u$ is said to be absolutely continuous if for every $\epsilon >0$ there exists $\delta>0$ such that for every finite collection $\ll (a_i, b_i) \rr$ of disjoint intervals in $[0,T]$ with $\sum_i(b_i-a_i)< \delta$ it is true that $\sum_i \left\| u(b_i)-u(a_i) \right\| < \epsilon$. We denote this class of functions as $AC([0,T];\mathfrak{B})$. We remark that the fundamental theorem of calculus as in the scalar-valued case does not hold for Bochner integral, which has instead a weaker version of it (see \cite[Proposition 1.2.3]{abhn}.) Hence a function $u \in AC\l [0,T]; \mathfrak{B} \r$ is not necessarily a.e. differentiable and the representation \eqref{reprac} is not necessarily true.

In view of the previous heuristic discussion it is clear that we will need the following technical Lemma.
\begin{lem}
\label{lemmatrasl}
The family of operators $\Gamma_t$, $t \geq 0$, defined in \eqref{defgammagen} forms a strongly continuous contraction semigroup on $L^1 \l [0,T]; \mathfrak{B} \r$, for any $T>0$. The generator $\l A^\Gamma, \text{Dom} (A^\Gamma) \r$ is given by $A^\Gamma u = - u^\prime$ with $\text{Dom} (A^\Gamma) = W$ where
\begin{align}
W := &\ll u \in L^1 \l [0,T], \mathfrak{B}  \r : u(t)-u(0) = \int_0^t u^\prime(s) ds\,  \text{ for all } t \in [0,T]  \text{ and } u(0) = \bm{0}  \rr.
\end{align}
\end{lem}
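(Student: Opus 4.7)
The plan is to verify in turn the semigroup law, the contraction bound, strong continuity at zero, and then to identify the generator through its resolvent. The semigroup law $\Gamma_s \Gamma_t = \Gamma_{s+t}$ is checked by unwinding the piecewise definition \eqref{defgammagen}: writing $\Gamma_s(\Gamma_t u)(r)$ and splitting on $r \geq s$ and $r - s \geq t$ yields $u(r-s-t)\mathds{1}_{[r \geq s+t]}$, which is $\Gamma_{s+t} u(r)$. The contraction estimate is then immediate from the substitution $r \mapsto r-t$:
\begin{align*}
\int_0^T \| \Gamma_t u(r) \|\, dr \, = \, \int_t^T \| u(r-t) \|\, dr \, = \, \int_0^{T-t} \| u(s) \|\, ds \, \leq \, \| u \|_{L^1}.
\end{align*}

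For strong continuity at zero I would first prove $\| \Gamma_t u - u \|_{L^1} \to 0$ on a dense subclass, namely the simple functions $u = \sum_i c_i \mathds{1}_{I_i}$ with $c_i \in \mathfrak{B}$ and $I_i \subset [0,T]$ a Borel set, which are dense by the very construction of the Bochner integral. For such $u$, translation invariance of Lebesgue measure together with the fact that $(I_i + t) \cap [t,T]$ tends to $I_i$ in measure forces $\| \Gamma_t u - u \|_{L^1} \to 0$, and a standard $3\varepsilon$-argument exploiting the contraction bound just obtained extends the convergence to every $u \in L^1([0,T]; \mathfrak{B})$.

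To identify the generator I would avoid passing directly to the limit in $(\Gamma_t u - u)/t$, since in the Bochner setting an absolutely continuous function need not be differentiable almost everywhere, and would instead compute the resolvent. For $\zeta > 0$ and $f \in L^1([0,T]; \mathfrak{B})$, Fubini gives
\begin{align*}
R(\zeta) f (r) \, := \, \int_0^\infty e^{-\zeta s} \Gamma_s f(r)\, ds \, = \, \int_0^r e^{-\zeta(r-\sigma)} f(\sigma)\, d\sigma.
\end{align*}
By Hille--Yosida, $R(\zeta)$ is a bijection from $L^1([0,T]; \mathfrak{B})$ onto $\text{Dom}(A^\Gamma)$, and setting $u := R(\zeta) f$ one reads off directly from the explicit integral that $u(0) = \bm{0}$, that $u$ admits the representation \eqref{reprac} with $u'(r) = f(r) - \zeta u(r) \in L^1$, and that $\zeta u - A^\Gamma u = f$. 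This proves simultaneously $\text{Dom}(A^\Gamma) \subseteq W$ with $A^\Gamma u = -u'$, and the reverse inclusion $W \subseteq \text{Dom}(A^\Gamma)$, since any $u \in W$ can be written as $R(\zeta)(u' + \zeta u)$.

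The main obstacle is exactly the one just flagged: in the vector-valued setting membership in $AC([0,T]; \mathfrak{B})$ does not guarantee almost-everywhere differentiability, so the direct route of computing $\lim_{t \downarrow 0} (\Gamma_t u - u)/t$ pointwise and then identifying the limit with $-u'$ for an arbitrary $u \in \text{Dom}(A^\Gamma)$ is not available. Routing the identification of the domain through the resolvent bypasses this difficulty, because the explicit convolution $R(\zeta) f$ is regular enough for its derivative to be read off by direct calculation, and the Hille--Yosida bijection then pins down the full domain.
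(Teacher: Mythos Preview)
Your proof is correct and close in spirit to the paper's, but the identification of the generator is organized differently. The paper first shows $\text{Dom}(A^\Gamma)\subset W$ by integrating the difference quotient: for $u\in\text{Dom}(A^\Gamma)$ it writes $-h^{-1}\int_{t-h}^t u(s)\,ds = h^{-1}\int_0^t(\Gamma_h u(s)-u(s))\,ds$, passes to the limit, and reads off $u(t)=\int_0^t(-A^\Gamma u)(s)\,ds$ with $u(0)=\bm{0}$. Only then does it compute the resolvent of the candidate operator $(-u',W)$, observe that its resolvent set meets $\rho(A^\Gamma)$, and invoke the abstract fact (Engel--Nagel, Exercise IV.1.21.(5)) that an extension with a common resolvent point must be an equality. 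Your route is more self-contained: you compute the semigroup resolvent $R(\zeta)f(r)=\int_0^r e^{-\zeta(r-\sigma)}f(\sigma)\,d\sigma$ directly via the Laplace formula and then identify $\text{Dom}(A^\Gamma)$ as the range of $R(\zeta)$, checking both inclusions by explicit manipulation of the convolution. This avoids the external extension lemma, at the price of a small Fubini verification when showing $R(\zeta)(u'+\zeta u)=u$ for $u\in W$. For strong continuity the paper uses $C_c$ functions as the dense class rather than simple functions; either choice works.
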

\begin{os} \normalfont
We remark that $W$ coincides with $AC\l [0,T], \mfB \r$ (with $u(0)=\bm{0}$), whenever the Banach space $\mathfrak{B}$ has the Radon-Nikodym property, i.e., when the Banach space is such that absolutely continuous functions are a.e. differentiable (see \cite[Definition 1.2.5]{abhn}). Otherwise $W$ is a subset of it.
\end{os}
\begin{proof}[Proof of Lemma \ref{lemmatrasl}]
Fix $T>0$ arbitrarily.
First we prove that $\Gamma_t$ is a contraction. We have that
\begin{align}
\int_0^T \left\| \Gamma_s u (t) \right\| \, dt \, = \,& \int_0^T \left\| u(t-s) \right\| \mathds{1}_{[s \leq t]} \, dt \notag \\
= \, & \mathds{1}_{[s \leq T]} \int_0^{T-s} \left\| u(t) \right\| dt \notag \\
\leq \, & \int_0^T \left\| u(t) \right\| dt.
\end{align}
Strong continuity of $\Gamma_s$ on $L^1 \l [0,T], \mathfrak{B}  \r$ can be proved by checking it first on the set of continuous functions with compact support which is dense in $L^1 \l [0, T], \mathfrak{B} \r$ and then extending to the whole Banach space (see, for example, \cite[Proposition 5.3]{engelnagel} or the discussion in \cite[page 14]{abhn}). To obtain the generator and its domain we proceed as follows. Let $\l \widehat{A}, \text{Dom}(\widehat{A})\r$ be the operator $\widehat{A}u = -u^\prime$ on $W$, we first check that Dom$(A^\Gamma) \subset W$ and that $\widehat{A}|_{\text{Dom}(A^\Gamma)}=A^\Gamma$. Note that, for any $t\in [0,T]$ and $u \in \text{Dom}(A^\Gamma)$ one has
\begin{align}
- h^{-1}  \int_{t-h}^{t} u(s) ds \, = \, h^{-1} \int_{0}^{t} (u(s-h)\mathds{1}_{[s-h \geq 0]} - u(s) ) ds.
\end{align} 
As $h \to 0$ the rhs converges to $\int_0^t g(s) ds$, where $g(s):=A^\Gamma u(s)$, since integration over compact intervals is continuous on $L^1$ while the lhs instead converges to $-u(t)$ for almost all $t\in [0, T]$. If we define appropriately $u$ on a null set we get
\begin{align}
u(t) = \int_0^t (-g(s)) ds
\end{align}
and thus $u$ is an $L^1 \l [0,T], \mathfrak{B} \r$ function and is absolutely continuous (according to \cite[Proposition 1.2.2]{abhn}) with derivative $-A^\Gamma u(s)$ and $u(0) = 0$. This shows that Dom$(A^\Gamma) \subset W$ and that $\widehat{A}|_{\text{Dom}(A^\Gamma)}=A^\Gamma$.
Denote now $\rho(\widehat A)$ and $\rho(A^\Gamma)$ the resolvent sets of the two operators. It is easy to see that the resolvent operator of $\widehat{A}$ is,
\begin{align}
R(\lambda, \widehat{A}) u(t)\, = \, \int_0^t e^{-\lambda (t-s)} u(s)ds, \qquad t \in [0,T],
\end{align}
for $\lambda >0$. It follows easily that $\rho(\widehat A) \cap \rho(A^\Gamma) \neq \emptyset$ and thus from \cite[Exercise IV.1.21.(5)]{engelnagel} we can conclude that $\l A^\Gamma, \text{Dom}(A^\Gamma) \r = \l \widehat{A}, W \r$.
\end{proof}
We are now in position to define the operator $f \l \partial_t-G_v \r$. 

\emph{As $-A^\Gamma$ is the differentiation operator on absolutely continuous functions, in the rest of the paper we will use the notation $-\partial_t$ in place of $A^\Gamma$.}

For any $v \in S$ we consider the family $\l U_s^v \r_{s \geq 0}$ of operators on functions $u \in L^1 \l [0,T]; \mathfrak{B} \r$ given by
\begin{align}
U_s^v u (t) \, := \, T_v(s) \Gamma_s u(t)
\end{align}
As a consequence of Lemma \ref{lemmatrasl} it is easy to prove that the operators $U_s^v$ form a strongly continuous contraction semigroup on $L^1 \l [0,T]; \mathfrak{B} \r$ whose generator has the form $A^U_vu(t)= -\partial_t u(t) + G_vu(t)$ on an appropriate subset of $ \text{Dom}(A^U_v)$, and thus we will use Phillips formula to define
\begin{align}
 f\l -(-\partial_t + G_v) \r u (t)\, := \,&f (-A^U_v)u (t) \notag \\ := \,  & -\int_0^\infty \left[U_s^vu(t)-u(t) \right] \nu(ds),
\label{defop}
\end{align}
for suitable functions $u$ (and thus with $u(0)=\bm{0}$).
\begin{lem}
\label{lemmadt}
Fix $T>0$ arbitrarily. The family $\l U_s^v \r_{s \geq 0}$ forms a strongly continuous contraction semigroup on $L^1 \l [0,T]; \mathfrak{B} \r$. The generator $\l A^U_v, \text{Dom}(A^U_v) \r$ is such that
\begin{align}
H^v \, := \, \ll u \in L^1 \l [0,T]; \mathfrak{B}\r :  u \in W, u(t) \in \mfB_0 \text{ for any } t \geq 0 \text{ and } G_vu  \in L^1 \l [0,T]; \mathfrak{B}\r  \rr
\label{defhv}
\end{align}
is a subset of $\text{Dom}(A^U_v)$ and $A^U_v \big|_{H^v} = -\partial_t + G_v$.
\end{lem}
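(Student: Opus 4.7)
The plan is to reduce the three assertions---contraction, semigroup property, and strong continuity---to the corresponding properties of $T_v$ and of $\Gamma_s$ already established in Lemma \ref{lemmatrasl}, and then to identify the generator on $H^v$ by a Leibniz-type decomposition of the difference quotient.

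First I would verify the semigroup property by direct computation. Since $T_v(s_2)$ acts only on the $\mathfrak{B}$-value and not on the scalar argument $t$, one has the commutation identity
\begin{align*}
\Gamma_{s_1}\bigl(T_v(s_2)\Gamma_{s_2}u\bigr)(t) \, = \, T_v(s_2)u(t-s_1-s_2)\mathds{1}_{[s_1+s_2 \leq t]} \, = \, T_v(s_2)\Gamma_{s_1+s_2}u(t),
\end{align*}
and combining with the group property of $T_v$ from A1) yields $U_{s_1}^v U_{s_2}^v u = U_{s_1+s_2}^v u$. Contraction is then immediate: $\|U_s^v u\|_{L^1} \leq \|\Gamma_s u\|_{L^1} \leq \|u\|_{L^1}$, using that $T_v(s)$ is a $\mathfrak{B}$-contraction by A1) together with Lemma \ref{lemmatrasl}.

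For strong continuity at $s = 0$ I would split
\begin{align*}
U_s^v u - u \, = \, T_v(s)\bigl(\Gamma_s u - u\bigr) + \bigl(T_v(s)u - u\bigr).
\end{align*}
The first summand has $L^1$-norm bounded by $\|\Gamma_s u - u\|_{L^1}\to 0$ by Lemma \ref{lemmatrasl}. For the second, on the dense subclass of continuous compactly supported $\mathfrak{B}$-valued functions the range $\{u(t):t\in [0,T]\}$ is compact in $\mathfrak{B}$, so strong continuity of $T_v$ gives $T_v(s)u(t)\to u(t)$ uniformly in $t$ and hence in $L^1$; a standard density/uniform-bound argument extends the conclusion to all of $L^1([0,T];\mathfrak{B})$.

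Finally, for $u \in H^v$, I would use
\begin{align*}
\frac{U_s^v u - u}{s} \, = \, T_v(s)\,\frac{\Gamma_s u - u}{s} + \frac{T_v(s)u - u}{s}.
\end{align*}
For the first term Lemma \ref{lemmatrasl} yields $s^{-1}(\Gamma_s u - u)\to -\partial_t u$ in $L^1$; the contraction bound on $T_v(s)$ absorbs the error, while the already-proved strong continuity lets one replace $T_v(s)(-\partial_t u)$ by $-\partial_t u$ in the limit. For the second term, since $u(t)\in \mathfrak{B}_0 = \mathrm{Dom}(G_v)$ for every $t$, the classical semigroup identity $T_v(s)u(t)-u(t) = \int_0^s T_v(r) G_v u(t)\,dr$ gives the pointwise bound $\|s^{-1}(T_v(s)u(t)-u(t))\| \leq \|G_v u(t)\|$ together with pointwise convergence to $G_v u(t)$; since $G_v u \in L^1([0,T];\mathfrak{B})$ by the definition of $H^v$, dominated convergence yields $L^1$-convergence to $G_v u$. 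Collecting both limits gives $A^U_v u = -\partial_t u + G_v u$ on $H^v$.

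The main technical obstacle is ensuring that strong continuity of the operator family $T_v(\cdot)$ on $\mathfrak{B}$ lifts to strong continuity of the induced pointwise action on $L^1([0,T];\mathfrak{B})$: this is not automatic because $T_v(\cdot)$ need not be uniformly continuous on unbounded sets, and it is precisely where the density argument via compactly supported continuous functions, together with the dominated-convergence reasoning invoked in the generator computation, plays the decisive role.
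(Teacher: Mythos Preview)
Your proof is correct and follows essentially the same route as the paper's: the same Leibniz-type splitting $U_s^v u - u = T_v(s)(\Gamma_s u - u) + (T_v(s)u - u)$ is used both for strong continuity and for the generator computation, with the two terms handled via Lemma~\ref{lemmatrasl} and properties of $T_v$ on $\mathfrak{B}$ respectively. The only cosmetic difference is that for the term $T_v(s)u-u$ the paper invokes dominated convergence directly with the majorant $2\|u(t)\|$, whereas you pass through a density argument via continuous compactly supported functions; both are standard and equivalent here.
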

\begin{proof}
Properties follow from Lemma \ref{lemmatrasl}. Indeed
\begin{align}
\int_0^T \left\| U_s^v u(t) \right\| dt\, = \,& \int_0^T \left\| T_v(s)\Gamma_su(t) \right\| dt \notag \\
\leq \, & \int_0^T\left\| \Gamma_su(t) \right\| dt \notag\\
\leq \, & \int_0^T\left\| u(t) \right\| dt
\end{align}
where the last inequality follows from Lemma \ref{lemmatrasl} while the second last from the fact that $T_v(s)$ is a contraction on $\mathfrak{B}$.
The property $U_s^vU_t^vu = U_{s+t}^vu$ is easily checked. Strong continuity instead can be checked by observing that
\begin{align}
&\int_0^T \left\| U_s^vu(t)-u(t) \right\| dt \notag \\ = \,& \int_0^T \left\| T_v(s)\Gamma_su(t)-u(t) \right\| dt \notag \\
\leq \, & \int_0^T \left\| T_v(s)\Gamma_su(t)-T_v(s)u(t)\right\| dt+ \int_0^T \left\| T_v(s)u(t)-u(t) \right\| dt \notag \\
\leq \, & \int_0^T \left\| \Gamma_s u(t) -u(t) \right\| dt + \int_0^T \left\| T_v(s)u(t)-u(t) \right\| dt
\label{verophil}
\end{align}
and then sending $s \to 0$ in \eqref{verophil}: the first member goes to zero by Lemma \ref{lemmatrasl} while the second goes to zero by dominated convergence (e.g., \cite[Theorem 1.1.8]{abhn}) since $s \mapsto T_v(s)u(t)$ is continuous as a function $[0,\infty] \ni s \mapsto \mathfrak{B}$ and $\left\| T_v(s)u-u \right\| \leq 2 \left\| u(t) \right\|$. 

Supposing now that $u \in H^v$, we have that
\begin{align}
\lim_{h \downarrow 0} \frac{U_h^vu-u}{h}
\end{align}
exists as strong limit in $L^1 \l [0,T]; \mathfrak{B} \r$.
Note that, indeed,
\begin{align}
\lim_{h \downarrow 0} \frac{U_h^v u-u}{h} \, = \, &\lim_{h \downarrow 0} \frac{T_v(h) \Gamma_h u - T_v(h)u+T_v(h)u-u}{h} \notag \\
= \, & \lim_{h \downarrow 0} T_v(h)\frac{ \Gamma_h  - I}{h} u+\frac{T_v(h)u-u}{h}  \notag \\
= \, & \lim_{h \downarrow 0} \left[ T_v(h) A^\Gamma_h u  + \frac{T_v(h)u-u}{h} \right],
\label{eddau}
\end{align}
where $A_h^\Gamma u := \l \Gamma_h - I \r/h$.
The last member in \eqref{eddau} is $G_vu$ as $h \to 0$ by dominated convergence since, whenever $u(t) \in \mfB_0$, one has $h^{-1}\left\| T_v(h)u(t)-u(t) \right\| \leq \left\| G_vu \right\|$. For the first note that
\begin{align}
&\int_0^T \left\| T_v(h) A_h^\Gamma u(t) - \l -\partial_t \r u(t) \right\| dt \notag \\
= \,& \int_0^T \left\| T_v(h) A_h^\Gamma u(t) -T_v(h) \l -\partial_t \r u(t)  +T_v(h) \l -\partial_t \r u(t) - \l -\partial_t \r u(t)\right\| dt \notag \\
\leq \, & \int_0^T \left\| A_h^\Gamma u(t) - \l -\partial_t \r  u(t)\right\| dt  + \int_0^T \left\| T_v(h)\l -\partial_t \r u(t) - \l -\partial_t \r u(t) \right\| dt
\end{align}
goes to zero as $h \to 0$:  the first member goes to zero since $u(t)\in \text{Dom}(-\partial_t)$ while the second goes to zero by dominated convergence for Bochner integrals since $T_v(h)$ is strongly continuous (on $\mathfrak{B}$).
\end{proof}
With the properties of $U_s^v$ at hand, we are finally able to define $f \l \partial_t -G_v \r$ as
\begin{align}
f \l \partial_t -G_v \r  \,: = \, f \l -A^U_v \r \big|_{H^v} \, = \, -\int_0^\infty \l U_s^v - \mathds{I} \r \, \nu(ds),
\end{align}
since on $H^v$ the generator $A^U_v$ reduces to $-\partial_t+G_v$.
In the following Lemma we clarify this in terms of Bochner subordination.
\begin{lem}
\label{domfg}
The subordinate semigroup
\begin{align}
\mathfrak{U}^v_t \, := \, \int_0^\infty U^v_s P \l \sigma (t) \in ds \r
\end{align}
is generated by $\mathfrak{A}_v$ with domain $\text{Dom}(\mathfrak{A}_v) \supset H^v $, such that
\begin{align}
\mathfrak{A}_v \big|_{H^v} \, = \,- f \l \partial_t -G_v \r.
\end{align}
\end{lem}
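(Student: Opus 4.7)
The plan is a direct application of the Phillips functional-calculus machinery for Bochner subordination, now in the vector-valued setting $L^1([0,T];\mathfrak{B})$. Lemma \ref{lemmadt} has already done the essential work: it establishes that $(U_s^v)_{s\ge 0}$ is a strongly continuous contraction semigroup on $L^1([0,T];\mathfrak{B})$ with generator $(A_v^U,\text{Dom}(A_v^U))$ satisfying $H^v\subset\text{Dom}(A_v^U)$ and $A_v^U\big|_{H^v}=-\partial_t+G_v$. This is exactly the hypothesis required to invoke Phillips' theorem as recalled in Section \ref{secdeff}.

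First I would verify that the Bochner integral $\mathfrak{U}_t^v u=\int_0^\infty U_s^v u\,P(\sigma(t)\in ds)$ is well defined on $L^1([0,T];\mathfrak{B})$: strong measurability of $s\mapsto U_s^v u$ follows from strong continuity of $U_s^v$, and integrability from the contraction property $\|U_s^v u\|_{L^1}\le \|u\|_{L^1}$ together with $\int_0^\infty P(\sigma(t)\in ds)=1$. Markov property of $\sigma(t)$ then yields the semigroup law $\mathfrak{U}_s^v\mathfrak{U}_t^v=\mathfrak{U}_{s+t}^v$, and strong continuity at $0$ follows in the standard way from dominated convergence using $\sigma(t)\to 0$ in probability as $t\downarrow 0$. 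This places us exactly in the framework of \cite[Proposition 12.1]{librobern}, transferred verbatim to our Banach space.

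Next, by Phillips' formula (\cite[Theorem 12.6]{librobern}), the generator $\mathfrak{A}_v$ of $\mathfrak{U}_t^v$ satisfies $\text{Dom}(\mathfrak{A}_v)\supset\text{Dom}(A_v^U)$ and on $\text{Dom}(A_v^U)$ one has
\begin{equation*}
\mathfrak{A}_v u\,=\,-f(-A_v^U)u\,=\,\int_0^\infty\bigl(U_s^v u-u\bigr)\,\nu(ds),
\end{equation*}
where the integral is a Bochner integral. Since $H^v\subset\text{Dom}(A_v^U)$, we obtain $H^v\subset\text{Dom}(\mathfrak{A}_v)$. Comparing with the definition
\begin{equation*}
f(\partial_t-G_v)u\,:=\,f(-A_v^U)u\big|_{H^v}\,=\,-\int_0^\infty\bigl(U_s^v u-u\bigr)\,\nu(ds)
\end{equation*}
given just before the statement, we conclude $\mathfrak{A}_v\big|_{H^v}=-f(\partial_t-G_v)$, which is the claim.

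The only non-routine point I expect is checking that the functional-calculus identities from \cite{librobern}, which are usually stated for contraction semigroups on an abstract Banach space, transfer without modification to $L^1([0,T];\mathfrak{B})$; but since Phillips' construction only uses the abstract contraction-semigroup structure (already supplied by Lemma \ref{lemmadt}) and Bochner integrability of $s\mapsto U_s^v u$, no new difficulty arises. The rest is a bookkeeping identification of $-f(-A_v^U)\big|_{H^v}$ with the operator $-f(\partial_t-G_v)$ defined above.
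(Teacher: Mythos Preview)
Your proposal is correct and follows essentially the same route as the paper's own proof: apply Phillips' theorem to obtain $\text{Dom}(A_v^U)\subset\text{Dom}(\mathfrak{A}_v)$ with $\mathfrak{A}_v\big|_{\text{Dom}(A_v^U)}=-f(-A_v^U)$, then restrict to $H^v$ via Lemma~\ref{lemmadt}. You include more verification of the hypotheses (well-definedness of the Bochner integral, applicability of Phillips' theorem on $L^1([0,T];\mathfrak{B})$) than the paper does, but the argument is the same.
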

\begin{proof}
Phillips' theorem implies that $\text{Dom}(A_v^U) \subset\text{Dom}(\mathfrak{A}_v)$ and
\begin{align}
\mathfrak{A}_v \big|_{\text{Dom}(A_v^U)} = \, & -f \l -A_v^U \r \notag \\
= \, & \int_0^\infty \l U_s^v -\mathds{1} \r \nu(ds).
\end{align}
Since $H^v \subset \text{Dom}\l A_v^U \r$ and, by Lemma \eqref{lemmadt}, $A_v^U \big|_{H^v} = -\partial_t +G_v$ then
\begin{align}
\mathfrak{A}_v \big|_{H^v} \, = \, - f \l \partial_t -G_v \r.
\end{align}
\end{proof}

\subsection{The expectation of a semi-Markov random evolution}
\label{secexp}
We obtain here some properties of
\begin{align}
q_v(t) \, : = \, \mathds{E}^v \mathcal{T}(t) u(V(t))
\end{align}
for suitable functions $u:S \mapsto \mathfrak{B}$.

\begin{lem}
\label{qindomg}
Let $u:S \mapsto \mathfrak{B}_0$ be such that
\begin{align}
\sup_{\substack{  v_1 \cdots v_{n} \\ t_1 \cdots t_{n} \\ \rev{n \in \mathbb{N}}}} \left\| G_{v_1} T_{v_{n}}(t_{n}) \cdots T_{v_1}(t_1) u(v_n)  \right\| < \infty.
\label{assbrutta}
\end{align}
Then we have that $q_v(t) \in \mfB_0$ for any $v \in S$ and $t \geq 0 $ and further $G_vq_{v} \in L^1 \l [0,T]; \mathfrak{B} \r $ for any $v \in S$.
\end{lem}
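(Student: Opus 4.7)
The strategy is to invoke the uniform bound of hypothesis~\eqref{assbrutta} in order to pull each generator $G_w$ inside the Bochner expectation that defines $q_v(t)$, relying on the closedness of $G_w$.

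Writing out the expectation explicitly, under $\mathds{P}^v$ we have $v_1 = v$ almost surely and
\begin{align*}
\mathcal{T}(t) u(V(t)) \, = \, T_{v_{N(t)+1}}(t-\tau_{N(t)}) \cdots T_{v_2}(J_2) T_{v_1}(J_1) u(v_{N(t)+1}).
\end{align*}
The first step is to observe that hypothesis~\eqref{assbrutta}, read with the free dummy $v_1$ ranging over $S$, provides a constant $C$ such that the random element $G_w \mathcal{T}(t) u(V(t))$ is well defined for every $w \in S$, path-by-path, and satisfies
\begin{align*}
\left\| G_w \mathcal{T}(t) u(V(t)) \right\| \leq C
\end{align*}
uniformly in $t \geq 0$, in the random realization, and in $w \in S$. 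Equivalently, the random element $\mathcal{T}(t) u(V(t))$ lies almost surely in $\bigcap_{w \in S} \text{Dom}(G_w) = \mfB_0$ with a uniform graph bound.

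The next step is to pass to the expectation. Each $G_w$ is closed on $\mfB$, being the generator of a strongly continuous contraction group by A1. Since both the random element $\mathcal{T}(t) u(V(t))$ and the image $G_w \mathcal{T}(t) u(V(t))$ are uniformly norm bounded, and hence Bochner integrable under $\mathds{P}^v$, the standard commutation rule between a closed operator and a Bochner integral with dominated image (Hille's theorem, see \cite{abhn}) yields
\begin{align*}
q_v(t) \in \text{Dom}(G_w), \qquad G_w q_v(t) \, = \, \mathds{E}^v G_w \mathcal{T}(t) u(V(t)), \qquad \left\| G_w q_v(t) \right\| \leq C.
\end{align*}
Letting $w$ vary over $S$ proves the first assertion $q_v(t) \in \mfB_0$.

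For the second assertion, specializing to $w = v$, the uniform bound $\| G_v q_v(t) \| \leq C$ holds for every $t \geq 0$, so $t \mapsto G_v q_v(t)$ belongs to $L^\infty \l [0,T]; \mfB \r \subset L^1 \l [0,T]; \mfB \r$ for every finite $T>0$. The only genuinely delicate step in this scheme is the path-by-path existence and uniform boundedness of $G_w \mathcal{T}(t) u(V(t))$: this is precisely what the strong assumption~\eqref{assbrutta} is tailored to deliver, after which the remainder of the argument reduces to routine closedness of generators combined with dominated convergence for Bochner integrals.
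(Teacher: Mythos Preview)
Your proof is correct and follows essentially the same route as the paper: use hypothesis~\eqref{assbrutta} to obtain a uniform bound $\|G_w\mathcal{T}(t)u(V(t))\|\le C$ pathwise, then invoke closedness of the generator to commute $G_w$ with the Bochner expectation and conclude $q_v(t)\in\mfB_0$ with $\|G_vq_v(t)\|\le C$, hence $G_vq_v\in L^1([0,T];\mfB)$. Your version is in fact slightly more explicit than the paper in justifying membership in the common domain $\mfB_0$ by letting the outer index $w$ range freely (via the relabeling that inserts a trivial factor $T_w(0)$), whereas the paper states this step without comment.
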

\begin{proof}[Proof of Lemma \ref{qindomg}]
We have by the assumptions that $\mathcal{T}(t,\omega) u(V(t,\omega)) \in \mathfrak{B}_0$ and further that $\left\| G_v\mathcal{T}(t) u(V(t)) \right\| < C$ where $C$ does not depend on $\omega$ and $t$. It follows that $G_v \mathcal{T}(t)u(V(t))$ is $P^v$-Bochner integrable\rev{. T}hen $G_v \mathds{E}^v \mathcal{T}(t)u(V(t)) = \mathds{E}^v G_v\mathcal{T}(t)u(V(t))$ since $G_v$ is linear and closed. It follows that  $\mathds{E}^v \mathcal{T}(t)u(V(t)) \in \mfB_0$ and also
\begin{align}
\int_0^T \left\| G_vq_v(t) \right\| dt \, \leq \, CT.
\label{331}
\end{align}
\end{proof}
Assumption \eqref{assbrutta} is satisfied in many situations of interest, such as the cases of translation and rotation groups, as shown in the following examples.
\begin{ex}[Translation] \normalfont
\label{extrans}
Let $S$ be finite and let $T_v(t)$ be a translation on $\mfB=L^1(\mathbb{R})$ at velocity $v$, say $T_v(t)h(x)= h(x+vt)$ (see, e.g.,  the case of the random evolution driven by the ``telegraph'' process, treated in  section 4, where $S$ contains two elements). Use the notation $u(v_n)=h_{v_n}$. Then, for $h_{v_n} \in AC (\mathbb{R})$, such that $h'_{v_n}\in L^1(\mathbb{R})$ we have
\begin{align}
\left\| G_{v_1} T_{v_n}(t_n)\dots T_{v_1} (t_1)h _{v_n} \right\|_{L^1} =    \int _{\mathbb{R}} \bigl |v_1 \partial_x h_{v_n}  (x+v_1t_1+\dots +v_nt_n) \bigr |dx =|v_1|\, \left\|h'_{v_n} \right\|_ {L^1}   
\end{align}
as the $L^1$ norm is translation-invariant. Hence we have
\begin{align}
\sup_{\substack{ v_1 \cdots v_{n} \\ t_1 \cdots t_{n} \\ \revbt{n \in \mathbb{N}} }} \left\| G_{v_1} T_{v_n}(t_n) \dots T_{v_1} (t_1) h _{v_n} \right\| _{L^1} = \sup _{v_1, v_n }|v_1| \,  \left\| h'_{v_n} \right\|_ {L^1}<\infty
\end{align}
the last inequality holding since $S$ is finite. A similar argument applies to the case where $T_v(t)$ is a translation on $\mfB=C_0(\mathbb{R})$, endowed with the sup-norm $ \left\|\cdot \right\|_{\infty}$. Indeed, for $h_{v_n}\in C_0^1(\mathbb{R})$, we have
\begin{align}
\left\|G_{v_1} T_{v_n}(t_n)\dots T_{v_1}(t_1)h _{v_n} \right\| _{\infty} =  \sup _x  |v_1 \frac{\partial}{\partial x} h_{v_n}  (x+v_1t_1+\dots +v_nt_n)| =|v_1|\,  \left\|h'_{v_n}  \right\|_ {\infty}   
\end{align}
since the sup-norm is invariant under translations. Then, by finiteness of $S$, we have
\begin{align}
\sup_{\substack{ v_1 \cdots v_{n} \\ t_1 \cdots t_{n} \\ \revbt{n \in \mathbb{N}}}} \left\| G_{v_1} T_{v_n}(t_n)\dots T_{v_1}(t_1)h _{v_n} \right\| _{\infty} = \sup _{v_1, v_n }|v_1| \,  \left\| h'_{v_n} \right\|_ {\infty}<\infty.
\end{align}
\end{ex}
\begin{ex}[Rotation] \normalfont
Let  $T_v(t)$ be  a rotation in $L^1 \l \mathbb{R}^2 \r$ (or $C_0 \l \mathbb{R}^2 \r$) defined by 
\begin{align}
T_v(t) h(x_1, x_2)= h(x_1\cos t+v x_2\sin t\, ,  \,  -vx_1\sin t+x_2\cos t), \qquad t\geq 0, \qquad v=\pm 1, \label{rotazione}
\end{align}
where $v=\pm 1$ respectively denote the clockwise and the counterclockwise rotation, with generator
\begin{align*}
G_v h(x_1, x_2)= v \l x_2\partial_{x_1} h(x_1, x_2)-x_1 \partial_{x_2} h(x_1, x_2) \r.
\end{align*}
Similarly to Example \ref{extrans}, it is easy to prove that  assumption \eqref{assbrutta} is satisfied in this case of rotation operators, because the $L^1$ and the sup-norm are invariant even under rotations; we leave the proof to the interested reader.
\end{ex}

The following result characterizes continuity and differentiability properties of $q_v(t)$.
\begin{lem}
\label{lemmadiff} 
Assume that $u: S \mapsto \mathfrak{B}_0$ is such that $\sup_v \left\| u(v) \right\| <   \infty$ and that the assumptions in Lemma \ref{qindomg} are satisfied. Suppose further that $\mathpzc{h}_v(w)$ satisfies \eqref{223}. Then, for any $v$ and $T>0$, it is true that, $q_v(t) \in AC\l [0,T]; \mathfrak{B} \r$, $q_v^\prime$ exists a.e. and is Bochner integrable on $[0,T]$ and further
\begin{align}
q_v(t) - q_v(0) \, = \, \int_0^t q_v^\prime (s) ds, \qquad t \in [0,T].
\end{align} 
\end{lem}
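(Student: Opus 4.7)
My plan is to bound the increment $\left\| q_v(t+h) - q_v(t) \right\|$ by splitting on whether $V$ has jumped in $(t, t+h]$, deduce absolute continuity, and then construct a Bochner--integrable candidate derivative $\psi_v$ for which $q_v(t) - q_v(0) = \int_0^t \psi_v(s) ds$; a.e.\ differentiability will then follow from the Lebesgue differentiation theorem for Bochner integrals.

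The starting decomposition is
\begin{align*}
q_v(t+h) - q_v(t) \, = \, \mathds{E}^v \left[ \Delta \, \mathds{1}_{\ll N(t+h)=N(t) \rr} \right] + \mathds{E}^v \left[ \Delta \, \mathds{1}_{\ll N(t+h) > N(t) \rr} \right],
\end{align*}
with $\Delta := \mathcal{T}(t+h)u(V(t+h)) - \mathcal{T}(t)u(V(t))$. On the no--jump event, pathwise $V(t+h) = V(t)$ and $\mathcal{T}(t+h) = T_{V(t)}(h)\mathcal{T}(t)$, so Lemma \ref{qindomg} lets me write $\Delta = \int_0^h T_{V(t)}(r) G_{V(t)} \mathcal{T}(t) u(V(t)) \, dr$; the integrand is uniformly norm-bounded by assumption \eqref{assbrutta} and A1), so this piece is $O(h)$. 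On the jump event, the crude bound $\left\| \Delta \right\| \leq 2 \sup_v \left\| u(v) \right\|$ combined with $P^v \l N(t+h) > N(t) \r \leq \mathds{E}^v [N(t+h) - N(t)] = \int_t^{t+h} \mathpzc{h}_v(s) ds$ yields a contribution bounded by $2 \sup_v \left\| u(v) \right\| \int_t^{t+h} \mathpzc{h}_v(s) ds$. The resulting estimate
\begin{align*}
\left\| q_v(t+h) - q_v(t) \right\| \, \leq \, C h + 2 \sup_v \left\| u(v) \right\| \int_t^{t+h} \mathpzc{h}_v(s) ds
\end{align*}
gives absolute continuity on $[0,T]$ via the standard $\epsilon$--$\delta$ criterion, using \eqref{223}.

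To promote AC to the integral representation, I would identify $\psi_v \in L^1 \l [0,T]; \mathfrak{B} \r$ with $q_v(t) = q_v(0) + \int_0^t \psi_v(s) ds$; once this is in place, \cite[Proposition 1.2.3]{abhn} and the strong Lebesgue point theorem for Bochner integrals give $q_v \in AC \l [0,T]; \mathfrak{B} \r$ with $q_v^\prime = \psi_v$ a.e. Letting $h \to 0$ in the no-jump piece points to $\mathds{E}^v \left[ G_{V(t)} \mathcal{T}(t) u(V(t)) \right]$ as one summand of $\psi_v$, whose Bochner integrability is immediate from \eqref{assbrutta}. The jump piece should contribute a ``rate $\mathpzc{h}_v(t)$ times expected post-minus-pre-jump value'' term, which I would extract via the strong Markov property at the first jump epoch after $t$ together with the waiting-time structure \eqref{217}--\eqref{220}; its integrability on $[0,T]$ is then a consequence of \eqref{223} and uniform boundedness of the evolution.

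The hardest part will be making the jump contribution rigorous and verifying the claimed integral representation: the first-jump renewal decomposition does not close as a Volterra equation for $q_v$ alone, because the semigroup $T_v(s)$ active before the first jump persists as an extra operator factor acting on the post-jump evolution. I expect to handle this by introducing an auxiliary two-parameter functional that tracks the pre-jump operator separately, and a Fubini argument leveraging \eqref{assbrutta} to bound integrands uniformly and \eqref{223} to integrate the jump contribution, so that $\psi_v$ can be identified and proved Bochner integrable on $[0,T]$.
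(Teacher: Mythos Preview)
Your absolute--continuity estimate is correct and self-contained: the no-jump/jump split, the $O(h)$ bound from \eqref{assbrutta}, and the control of the jump probability by $\int_t^{t+h}\mathpzc{h}_v(s)\,ds$ via \eqref{223} are all fine. But the lemma asks for more than $q_v\in AC([0,T];\mfB)$: since $\mfB$ need not have the Radon--Nikodym property (cf.\ the Remark after Lemma~\ref{lemmatrasl}), you must exhibit an explicit $\psi_v\in L^1([0,T];\mfB)$ with $q_v(t)-q_v(0)=\int_0^t\psi_v$. This is precisely the part you leave as a sketch, and your description of the jump contribution (``rate $\mathpzc{h}_v(t)$ times expected post-minus-pre-jump value'') is not yet a formula you can verify integrates to $q_v(t)-q_v(0)$. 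Carrying this out from the increment split would amount to rederiving a Dynkin-type martingale formula for the semi-Markov evolution.

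The paper short-circuits exactly this difficulty by invoking such a formula directly (\cite[Corollary~2.5]{suisciucbook}): it starts from the pathwise identity
\[
q_v(s)-q_v(0)=\mathds{E}^v\int_0^s\Bigl(\tfrac{g_{V(y)}(\gamma(y))}{\overline{F}_{V(y)}(\gamma(y))}\,\mathcal{T}(y)Bu(V(y))+\mathcal{T}(y)G_{V(y)}u(V(y))\Bigr)\,dy,
\]
with $\gamma(y)=y-\tau_{N(y)}$ and $Bu(v)=\int_S(u(v')-u(v))h_v(dv')$, and then only needs to justify swapping $\mathds{E}^v$ and $\int_0^s$. The hazard-rate factor $g/\overline{F}$ plays the role of your ``rate'' term, and its $L^1_{\mathrm{loc}}$ bound comes from estimating the law of $\gamma(y)$ by $\delta_y+\mathpzc{h}_v(y-\cdot)$ and using that $u^f\in L^1_{\mathrm{loc}}$. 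So your two summands for $\psi_v$ are morally the right ones, but the paper's route gives the integral representation for free and reduces the work to an integrability check; your route would have to reprove the representation before that check can even begin.
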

\begin{proof}
Denote $\gamma (t) : = t-\tau_{N(t)}$ and $Bu(v):=\int_S \l u(v^\prime)-u(v) \r h_v(dv^\prime)$ and use the Dynkin-type representation of the semi-Markov random evolution $\mathcal{T}(t)$ which can be found (for example) in \cite[Corollary 2.5]{suisciucbook}: for $s>0$,
\begin{align}
q_v(s) - q_v(0) \, = \, &\mathds{E}^v\int_0^s   \l \frac{g_{V(y)}(\gamma(y))}{\overline{F}_{V(y)}(\gamma(y))} \mathcal{T}(y) Bu(V(y)) + \mathcal{T}(y) G_{v} u(V(y))\r \, dy \notag \\
= \, & \int_0^s  \mathds{E}^v \l \frac{g_{V(y)}(\gamma(y))}{\overline{F}_{V(y)}(\gamma(y))} \mathcal{T}(y) Bu(V(y)) + \mathcal{T}(y) G_{v} u(V(y))\r \, dy,
\label{516}
\end{align}
where $g$ and $\overline{F}$ are the densities and the survival functions of the waiting times according to Section \ref{sec21}.
The last equality is justified by the following arguments.
Since $T_v(t)u(v^\prime) \in \mathfrak{B}_0$ for any $v$, $v^\prime$ and $t\geq 0$, by our assumptions, we can find constants, say it $C_1$ and $C_2$, such that
\begin{align} 
&\left\| \mathcal{T}(y) Bu(V(y)) \right\| \, \leq \, \left\| Bu(V(y)) \right\| \, \leq 2 \sup_v\left\| u(v) \right\| \, \leq \, C_1 < \infty \label{c1} \\
& \left\| \mathcal{T}(y) G_v u(V(y)) \right\| \, \leq \, \left\| G_vu(V(y)) \right\| \, \leq \, C_2 < \infty \label{c2}
\end{align}
and further $C_1$ and $C_2$ depend on $u$ but are independent from $y$.
Therefore, using that $\overline{F}_v$ is non increasing and that a.s. $\gamma(y)\in [0,y]$,
\begin{align}
& \int_0^s \left\| \mathds{E}^v \l \frac{g_{V(y)}(\gamma(y))}{\overline{F}_{V(y)}(\gamma(y))} \mathcal{T}(y) Bu(V(y)) + \mathcal{T}(y) G_vu(V(y))\r\right\| dy \notag \\
\leq \,&  \frac{C_1}{\overline{F}_\star(s)} \int_0^s \mathds{E}^v g_{V(y)}(\gamma(y)) dy  + s C_2.
\label{herewe}
\end{align}
where, using \eqref{217}, it is easy to see that $\overline{F}_\star(s):= \inf_v \overline{F}_v(s)>0$ because $\sup_v \theta_v < \infty$.
Since the waiting times between jumps are absolutely continuous r.v.'s, then the distribution of $\gamma (y)$ can be estimated as in \cite[page 61]{cox} to be 
\begin{align}
P^v \l \gamma (y) \in dw \r \leq \delta_y(dw) +   \mathpzc{h}_v(y-w) dw.
\end{align}
\revbt{It follows that
\begin{align}
\mathds{E}^vg_{V(y)}(\gamma(y)) \, = \,& \int_S\int_0^y g_z(w) P^v \l \gamma(y) \in dw, V(y) \in dz \r \notag \\
= \, &\int_S \int_0^y \int_0^{+\infty} \mu_s(w) \theta_ze^{-\theta_zs}ds P^v \l \gamma(y) \in dw, V(y) \in dz \r \notag \\
\leq \, &  \sup_z \theta_z \int_0^y u^f (w)P^v \l \gamma (y) \in dw \r \notag \\
\leq \, & \sup_z \theta_z \int_0^y u^f(w) \l \delta_y(dw) + \mathpzc{h}_v(y-w)  dw \r \notag \\
= \,& \sup_z\theta_z \l  u^f(y) + \int_0^y u^f (w)  \mathpzc{h}_v(y-w) dw\r
\label{stimamediagg}
\end{align}
}where $u^f$ is defined in \eqref{220}. Since $u^f \in L^1_{\text{loc}}\l \mathbb{R}^+ \r$ it follows that the second term  in \eqref{stimamediagg} is in $L^1 \l \mathbb{R}^+ \r$ by the properties of Laplace convolution (e.g. \cite[page 22]{abhn}). Therefore $y \mapsto \mathds{E}^v \l g_{V(y)} (\gamma(y)) \r$ is in $L^1_{\text{loc}} \l \mathbb{R}^+ \r$ for any $v$ and thus  the last term in \eqref{herewe} is finite.
Hence the integrand on the lhs of \eqref{516} is the (Bochner integrable) function (a.e. $q_v^\prime(t)$) we were looking for and further we have by \cite[Proposition 1.2.2]{abhn} that $q_v(t)$ is also $AC\l [0,T]; \mathfrak{B} \r$ and a.e. differentiable.
\end{proof}
Before stating the governing equation we need the following techical Lemma.
\begin{lem}
\label{lemmaexist}
Under the assumptions of Lemmas \ref{qindomg}, \ref{lemmadiff} we have that the function $[0,T] \ni t \mapsto q_v^0(t):= q_v(t)-T_v(t)u(v)$ is in $H^v$ (see \eqref{defhv}), for any $v \in S$.
\end{lem}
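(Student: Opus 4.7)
The plan is to verify directly the four defining conditions of the set $H^v$ (see \eqref{defhv}) for the function $q_v^0(t) = q_v(t) - T_v(t)u(v)$: (a) $q_v^0 \in L^1([0,T]; \mathfrak{B})$, (b) $q_v^0 \in W$, meaning $q_v^0(0) = \bm{0}$ and the absolutely continuous representation holds, (c) $q_v^0(t) \in \mathfrak{B}_0$ for every $t$, and (d) $G_v q_v^0 \in L^1([0,T]; \mathfrak{B})$.

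Conditions (a), (c) and (d) are the easy ones and should follow almost immediately from material already collected. For (c), Lemma \ref{qindomg} places $q_v(t)$ in $\mathfrak{B}_0$, while $T_v(t)u(v) \in \mathfrak{B}_0$ because $u(v) \in \mathfrak{B}_0$ by hypothesis and a strongly continuous (semi)group leaves the domain of its generator invariant; the difference is thus still in $\mathfrak{B}_0$. For (d), Lemma \ref{qindomg} gives $G_v q_v \in L^1([0,T];\mathfrak{B})$, and for the second piece I use $G_v T_v(t)u(v) = T_v(t)G_v u(v)$ together with the contraction assumption A1 to bound $\|G_v T_v(t)u(v)\| \leq \|G_v u(v)\|$, which is integrable over $[0,T]$. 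Condition (a) is then automatic since both $q_v$ (bounded by $\sup_v \|u(v)\|$) and $T_v(t)u(v)$ (bounded by $\|u(v)\|$) are bounded in norm on $[0,T]$.

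Condition (b) is the one that requires a small computation. The initial value is trivial: $q_v^0(0) = q_v(0) - T_v(0)u(v) = u(v) - u(v) = \bm{0}$. For the integral representation, Lemma \ref{lemmadiff} gives $q_v(t) - q_v(0) = \int_0^t q_v^\prime(s)\,ds$, while the classical semigroup identity on the domain of the generator (applicable because $u(v) \in \mathfrak{B}_0$) gives $T_v(t)u(v) - u(v) = \int_0^t T_v(s) G_v u(v)\,ds$. Subtracting and using linearity of the Bochner integral yields
\begin{align*}
q_v^0(t) - q_v^0(0) \, = \, \int_0^t \bigl( q_v^\prime(s) - T_v(s) G_v u(v) \bigr)\,ds,
\end{align*}
which is exactly the representation required for membership in $W$, with a.e. derivative $(q_v^0)^\prime(s) = q_v^\prime(s) - T_v(s) G_v u(v)$ that is Bochner integrable on $[0,T]$ by the bounds above.

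I do not expect a serious obstacle here: the statement is essentially bookkeeping that combines Lemma \ref{qindomg} (which puts $q_v$ in the domain of $G_v$ and controls $G_v q_v$ in $L^1$), Lemma \ref{lemmadiff} (which gives the absolutely continuous representation of $q_v$), and the standard fact that $T_v(t)u(v)$ is smooth in $t$ precisely because $u(v) \in \mathfrak{B}_0$. The only subtle point is the subtraction of the initial datum, whose purpose is exactly to force $q_v^0(0) = \bm{0}$ so that $q_v^0 \in W$ rather than merely $AC([0,T]; \mathfrak{B})$; this is the reason the statement is phrased in terms of $q_v^0$ rather than $q_v$.
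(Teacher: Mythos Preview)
Your proof is correct and follows essentially the same approach as the paper: the paper also checks $q_v^0(0)=\bm{0}$, writes $q_v^0(t)=\int_0^t q_v^\prime(s)\,ds-\int_0^t T_v(s)G_v u(v)\,ds$ using Lemma \ref{lemmadiff} and the semigroup integral representation for $u(v)\in\mathfrak{B}_0$, and then invokes Lemma \ref{qindomg} for the $G_v$-integrability. Your version is simply more explicit in separately verifying the four defining conditions of $H^v$.
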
 
\begin{proof}
Note that $q_v^0(0) = \bm{0}$ and further
\begin{align}
q_v^0(t) \, = \, &u(v) + \int_0^t q^\prime_v(s) ds - \l u(v)+ \int_0^t T_v(s)G_vu(v) ds \r \notag \\
= \, & \int_0^t q^\prime_v(s) ds - \int_0^t T_v(s)G_vu(v) ds
\end{align}
where we used Lemma \eqref{lemmadiff} and the well known integral representation of semigroups for $u(v) \in \mfB_0$. Then since we know from Lemma \ref{qindomg} that $G_vq(t) \in L^1 \l [0,T]; \mathfrak{B} \r$ it follows from Lemma \ref{lemmadt} that $q_v^0 \in H^v$.
\end{proof}

\subsection{The governing equation}
In this section we give the rigorous result on the governing equation of $q_v(t)$. 
\begin{te}
\label{teeqf}
Let the assumptions of Lemma \ref{lemmaexist} prevail. Then $q_v(t)$ satisfies the following problem on $\mfB$
\begin{align}
f \l \partial_t-G_v \r \l q_v(t)-T_v(t)q_v(0) \r \, = \, \theta_v \int_{S} \l q_{v^\prime}(t)- q_v(t) \r h_v(dv^\prime), \qquad q_v(0) = u(v),
\label{eqtee}
\end{align}
for any $v \in S$ and $t \geq 0$.
\end{te}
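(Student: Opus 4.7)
The plan is to prove \eqref{eqtee} by Laplace transform in $t$, reducing it to an algebraic identity which is then verified through the Phillips/Bochner subordination calculus of Section \ref{secdeff} combined with the Markovian governing equation \eqref{boltzmann}.

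First I would transform the LHS. Since by Lemma \ref{lemmaexist} the function $q_v^0 := q_v(\cdot) - T_v(\cdot) u(v)$ belongs to $H^v$, Lemma \ref{domfg} yields the Bochner representation $f(\partial_t - G_v) q_v^0 = \int_0^\infty (\mathds{I} - U_s^v) q_v^0\, \nu(ds)$. Applying Fubini -- justified by the integrability of $\nu$ near zero and infinity together with the bounds of Lemma \ref{lemmaexist} -- and noting that $\int_0^\infty e^{-\lambda t} U_s^v q_v^0(t)\, dt = e^{-\lambda s} T_v(s)\, \tilde q_v^0(\lambda)$, I would obtain
\begin{align*}
\int_0^\infty e^{-\lambda t} f(\partial_t-G_v) q_v^0(t)\, dt \, = \, \int_0^\infty \l \mathds{I} - e^{-\lambda s} T_v(s)\r \nu(ds)\, \tilde q_v^0(\lambda) \, = \, f(\lambda-G_v)\, \tilde q_v^0(\lambda),
\end{align*}
where the last equality is Phillips' theorem applied to the strongly continuous contraction semigroup $e^{-\lambda s} T_v(s)$, whose generator is $G_v - \lambda$. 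The RHS of \eqref{eqtee} Laplace-transforms to $\theta_v B\, \tilde q_v(\lambda)$, with $Bu(v) := \int_S (u(v')-u(v)) h_v(dv')$, so \eqref{eqtee} reduces to the algebraic identity
\begin{align*}
f(\lambda - G_v)\bigl[\tilde q_v(\lambda) - R(\lambda, G_v) u(v)\bigr] \, = \, \theta_v B\, \tilde q_v(\lambda).
\end{align*}

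To verify this identity I would introduce the Markov companion $p_v(t) := \mathds{E}^v \widetilde{\mathcal{T}}(t) u(W(t))$ (random evolution driven by the underlying CTMC $W(t)$), which by \eqref{boltzmann} satisfies $(\lambda - G_v - \theta_v B)\, \tilde p_v(\lambda) = u(v)$, and compute $\tilde q_v(\lambda)$ directly from the subordination structure $V(t) = W(L(t))$, $J_n = \sigma(\bar\tau_n)-\sigma(\bar\tau_{n-1})$. Splitting $\int_0^\infty e^{-\lambda t} \mathcal{T}(t) u(V(t))\, dt$ over flight intervals $[\tau_n, \tau_{n+1})$ and using the semigroup identity $\int_0^{J} e^{-\lambda s} T_w(s) x \, ds = R(\lambda, G_w)\bigl[x - e^{-\lambda J} T_w(J) x\bigr]$ for $x \in \mfB_0$, the integrand telescopes into resolvents and boundary terms. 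Taking expectations, independence of $\sigma$ from the chain and stationarity of its increments permit one to average $e^{-\lambda J_n} T_{v_n}(J_n)$ against the $\text{exp}(\theta_{v_n})$-distributed $\bar\tau_n - \bar\tau_{n-1}$ via Bochner subordination applied to $e^{-\lambda s}T_{v_n}(s)$: this produces $f(\lambda-G_{v_n})$-resolvents in place of the $\lambda - G_{v_n}$ that would appear in the Markov case. Matching the resulting closed form for $\tilde q_v(\lambda)$ against the Markov relation for $\tilde p_v$ yields the required identity, and Laplace uniqueness concludes.

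The main obstacle is this last step: the pathwise identity $\mathcal{T}(t) = \widetilde{\mathcal{T}}(L(t))$ \emph{fails}, since the in-flight durations $J_n$ are subordinator increments rather than time-changes of the Markov waiting times. The Laplace computation must therefore be performed flight by flight, at each level invoking Phillips' theorem to replace $\lambda - G_v$ by $f(\lambda - G_v)$. Once this is done, the integrability supplied by Lemma \ref{lemmaexist} together with uniqueness of the Laplace transform on $L^1([0,T];\mfB)$ upgrade the identity to the pointwise statement \eqref{eqtee}.
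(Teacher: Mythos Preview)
Your overall strategy---Laplace transform in $t$, combined with the Phillips/Bochner calculus to identify the symbol of $f(\partial_t-G_v)$---is the same as the paper's. Your computation of the Laplace transform of the left-hand side, arriving at $f(\lambda-G_v)\,\tilde q_v^0(\lambda)$ via the semigroup $e^{-\lambda s}T_v(s)$, is correct and corresponds to what the paper does. The difference lies entirely in how the Laplace-space identity is \emph{verified}.

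The paper does not attempt a flight-by-flight expansion. Instead it uses the first-step renewal equation
\[
q_v(t)=T_v(t)u(v)\,P^v(J_1>t)+\int_0^t\!\int_S T_v(s)q_{v'}(t-s)\,h_v(dv')\,P^v(J_1\in ds),
\]
applies $T_v(-t)$ to both sides (introducing $\varphi_{v,v'}(t):=T_v(-t)q_{v'}(t)$), and only then takes the Laplace transform. The point of the $T_v(-t)$ twist is that the resulting Laplace identity involves only \emph{scalar} multipliers---namely $f(\lambda)/\lambda$ and $\theta_v/(\theta_v+f(\lambda))$, the known transforms of $P^v(J_1>t)$ and $P^v(J_1\in ds)$---rather than the operator $f(\lambda-G_v)$. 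This scalar identity is rearranged, inverted to an integrated equation in $t$, upgraded from ``almost all $t$'' to ``all $t$'' by continuity, and finally differentiated and hit with $T_v(t)$ to recover \eqref{eqtee}.

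Your plan, by contrast, stays in operator form and proposes to verify $f(\lambda-G_v)[\tilde q_v(\lambda)-R(\lambda,G_v)u(v)]=\theta_v B\,\tilde q_v(\lambda)$ by summing over all flight intervals and invoking Phillips at each level. This is not wrong in principle, but it is substantially harder than the first-step analysis, and the passage you label ``the main obstacle'' is genuinely unresolved: the telescoping you describe does not close up cleanly (the boundary terms involve $u(v_{n+1})$ versus $u(v_{n+2})$), and the introduction of the Markov companion $p_v$ does not obviously help once you have correctly noted that $\mathcal{T}(t)\neq\widetilde{\mathcal{T}}(L(t))$. The renewal equation sidesteps all of this in one stroke. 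You also skip the continuity-and-differentiation step needed to pass from the Laplace identity back to the pointwise equation for every $t\geq 0$; ``Laplace uniqueness'' alone gives only an a.e.\ statement.
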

\begin{proof}
Use \cite[Theorem 3.1]{koro} to say that
\begin{align}
q_v(t) \, =  \, &\mathds{E}^v \mathcal{T}(t)u(V(t))\mathds{1}_{\ll J_1 > t \rr}+ \mathds{E}^v \mathcal{T}(t)u(V(t))\mathds{1}_{\ll J_1 \leq t \rr} \notag 	\\	 
= \, &T_v(t)u(v) P^v(J_1>t)+  \int _0^t  \int _S T_v(s)q_{v^\prime}(t-s)\, h_v(dv^\prime) \,  P^v \l J_1\in ds\r.
\label{this}
\end{align}
Since $\left\| u(V(t)) \right\| \leq \sup_v \left\| u(v) \right\|< \infty$ we have that $u(V(t))$ is $P^v$-Bochner integrable and since $\left\| \mathcal{T}(t)u(V(t))\mathds{1}_{\ll J_1 \leq t \rr} \right\| \leq \left\|u(V(t)) \right\|$ we have that $ \mathcal{T}(t)u(V(t))\mathds{1}_{\ll J_1 \leq t \rr}$ is $P^v$-Bochner integrable. It follows that also $T_v(-t)\mathcal{T}(t)u(V(t))\mathds{1}_{\ll J_1 \leq t \rr}$ is $P^v$-Bochner integrable and thus we can apply $T_v(-t)$ to both sides of \eqref{this} and move $T_v(-t)$ inside the integral to get
\begin{align}
T_v(-t)q_v(t)= u(v) P^v(J_1>t)+  \int_0^t   \int _S  T_v(-(t-s))q_{v^\prime} (t-s) \,  h_v(dv') \, P^v(J_1\in ds).
\end{align}
Introduce the notation $\varphi_{v, v^\prime}(t):=T_{v}(-t)q_{v^\prime}(t)$. We have that
\begin{align}
\varphi_{v, v}(t)= u(v) P^v(J_1>t)+  \int_0^t   \int _S  \varphi_{v, v^\prime}(t-s) \,  h_v(dv') \, P^v(J_1\in ds).
\label{take}
\end{align}
Since $q_v \in L^1 \l [0,T]; \mfB \r$ for any $T>0$, it follows that $q_v \in L^1_{\text{loc}} \l \mathbb{R}^+; \mfB \r$ and thus we can take the Laplace transform $(t \mapsto \lambda)$ in both sides of \eqref{take} in the sense of \cite[chapter 1.4]{abhn}. As the last term in \eqref{take} is a convolution, we can use \cite[Proposition 1.6.4]{abhn} to get that, for $\lambda >0$,
\begin{align}
\widetilde{\varphi}_{v,v}(\lambda)= \frac{f(\lambda)}{\lambda} \frac{1}{\theta_v +f(\lambda)}u(v) +  \frac{\theta_v}{\theta_v +f(\lambda)} \int _S  \widetilde{\varphi}_{v,v^\prime}(\lambda) h_v(dv').
\label{lapl1}
\end{align}
For the Laplace transform of $P^v \l J_1 \in ds \r$ and $P^v \l J_1 > t \r$ see \cite[eqs (4.5) and (4.6)]{meerpoisson}.

Now multiply by $\lambda^{-1}$ both sides of \eqref{lapl1} and rearrange to get
\begin{align}
\frac{f(\lambda)}{\lambda}  \widetilde{\varphi}_{v,v}(\lambda)   -  \frac{f(\lambda)}{\lambda^2} u(v) = \frac{1}{\lambda} \theta_v\int _S  \l  \widetilde{\varphi}_{v,v^\prime}(\lambda) -\widetilde{\varphi}_{v, v}(\lambda) \r h_v(dv').
\label{laplspace}
\end{align}
Let $\varphi^0(w):= \varphi_{v,v}(w)-u(v)$. Then the following equation
\begin{align}
-&\int_0^t \int_0^\infty \l \Gamma_s\varphi^0(w)- \varphi^0(w) \r \nu (ds)dw \, = \, \int_0^t \theta_v \int_S \l \varphi_{v,v^\prime}(w) - \varphi_{v,v}(w)  \r \, h_v(dv^\prime) \, dw,
\label{534}
\end{align}
where $\Gamma_s$ is the operator defined in \eqref{defgammagen},
coincides in the Laplace space $(t \mapsto \lambda)$ with \eqref{laplspace} and this proves that \eqref{534} is verified for almost all $t \geq 0$. First note that the lhs is an element of $\mathfrak{B}$ since the integrand is such that
\begin{align}
\int_0^\infty \l \Gamma_s\varphi^0(w)- \varphi^0(w) \r \nu (ds) \, = \,  \int_0^\infty T_v(-w) \l U_s^vq^0_v(w)- q^0_v(w) \r \nu (ds)
\label{353}
\end{align}
where $q_v^0(w):=q_v(w) - T_v(w)u(v)= T_v(w) \varphi^0(w)$, and the rhs of \eqref{353} is in $L^1\l [0,T]; \mathfrak{B} \r$ by Lemma \ref{lemmaexist}.
For the rhs note that
\begin{align}
 \theta_v \int_{S}  \left\| \l q_{v^\prime}(t)- q_v(t) \r \right\| h_v(dv^\prime)  \, \leq \, 2 \theta_v \sup_v\left\| q_v(t) \right\| \, \leq \, 2  \theta_v \sup_v \left\| u(v) \right\| < \infty.
\end{align} 
For checking the Laplace transform of the rhs one just need to apply Fubini theorem for Bochner integral (e.g. \cite[Theorem 1.1.9]{abhn}) together with \cite[Corollary 1.6.5]{abhn}. For the lhs \revbt{the existence of Laplace transform can be ascertained by standard arguments (e.g., \cite[Theorem 13.6]{librobern})
using the definition of $A^U_v$ together with the estimates in \eqref{331}, \eqref{herewe} and \eqref{stimamediagg}. Then} we compute 
\begin{align}
&-\int_0^\infty e^{-\lambda t} \int_0^t \int_0^\infty \l \Gamma_s\varphi^0 (w) - \varphi^0(w) \r \nu (ds) \, dw \, dt \notag \\ = \, &- \frac{1}{\lambda} \int_0^\infty e^{-\lambda w} \int_0^\infty \l \Gamma_s\varphi^0 (w) - \varphi^0(w)  \r \nu (ds) \, dw \notag \\
= \, &  -\frac{1}{\lambda} \int_0^\infty e^{-\lambda w} \int_0^\infty \l \varphi_{v,v} (w-s) \mathds{1}_{\ll s \leq w \rr} - \varphi_{v,v} (0) \mathds{1}_{\ll s\leq w \rr} -  	 \varphi_{v,v} (w) + \varphi_{v,v}(0) \r \nu(ds) \, dw \notag \\
= \, & -\frac{1}{\lambda} \int_0^\infty e^{-\lambda w} \int_0^\infty \l \varphi_{v,v} (w-s) \mathds{1}_{\ll s \leq w \rr} + \varphi_{v,v} (0) \mathds{1}_{\ll s>w \rr} - \varphi_{v,v} (w) \r \nu(ds) \, dw  \notag \\
= \, & -\frac{1}{\lambda}  \int_0^\infty \l e^{-\lambda s} \widetilde{\varphi}_{v,v}(\lambda) + \lambda^{-1}\varphi_{v,v}(0) \l 1 -e^{-\lambda s}\r - \widetilde{\varphi}_{v,v}(\lambda) \r \nu(ds) \notag \\
= \, & - \frac{1}{\lambda} \int_0^\infty \l \l \widetilde \varphi_{v,v}(\lambda)-\lambda^{-1}  \varphi_{v,v}(0)\r \l e^{-\lambda s}-1 \r \r \nu(ds) \notag \\
= \, & \frac{f(\lambda)}{\lambda} \widetilde \varphi_{v,v}(\lambda) \, - \, \frac{f(\lambda)}{\lambda^2} u(v)
\end{align}
where in the last step we used \eqref{reprbern}, in the third last we used Fubini while in the second line we used again \cite[Corollary 1.6.5]{abhn}. 

 The fact that the equality \eqref{534} is true for any $t \geq 0$ (and not only for almost all $t \geq 0$) comes from (strong) continuity of both sides which is a consequence of properties of Bochner integrals (e.g. \cite[Proposition 1.2.2]{abhn}). Further $s \mapsto \varphi_{v,v^\prime}(s)-\varphi_{v,v}(s) = T_v(-s) \l q_{v^\prime}(s)-q_v(s) \r $ and, since $s \mapsto q_v(s)$ is continuous by Lemma \ref{lemmadiff}, we have that $s \mapsto \varphi_{v,v^\prime}(s)-\varphi_{v,v}(s)$ is continuous. Thus we can take the strong derivative in \eqref{534}, use representation \eqref{353} and apply $T_v(t)$ to both sides of the equation to get
\begin{align}
f \l \partial_t-G_v \r (q_v(t)-T_v(t)q_v(0))  \, = \, \theta_v \int_{S} \l q_{v^\prime}(t)- q_v(t) \r h_v(dv^\prime).
\end{align}
\end{proof}
\begin{os} \normalfont
Take a function $u \in L^1 \l [0,T]; \mfB \r$ with $u (0) \neq \bm{0}$ so that $u \notin H^v$ and define $u^0(t):=u(t) - T_v(t) u(0)$. Now $u^0(0)=\bm{0}$ so that one could have $u^0 \in H^v$. This is exactly the case of $q_v$ and $q_v^0$. Here the regularizing term $-T_v(t) u(0)$ has the same role of the regularization $-u(0)$ in the canonical fractional derivative \eqref{spieg}.
Indeed, whenever $u(t)$ is such that following computations are justified, the function $f \l \partial_t - G_v \r u^0(t)$ can be rewritten as
\begin{align}
&f\l \partial_t - G_v) \r u^0(t)\notag \\ = \,&- \int_0^\infty \left[U_s^vu^0(t)-u^0(t) \right] \nu(ds)\notag \\
= \, & - \int_0^\infty \left[U_s^vu(t)-U_s^vT_v(t)u(0)-u(t)+T_v(t) u(0) \right] \nu(ds).
\end{align}
Since
\begin{align}
U_s^vT_v(t)u(0) \, = \, &T_v(s) \Gamma_s T_v(t)u(0) \notag \\
= \, & T_v(s) T_v(t-s) u(0) \mathds{1}_{[s \leq t]} \notag \\
= \, & T_v(t)  u(0) \mathds{1}_{[s\leq t]}
\end{align}
it follows that
\begin{align}
f\l \partial_t - G_v) \r u^0(t) \, = \,-  \int_0^\infty  \left[ U_s^vu(t) - u(t) \right] \nu(ds) -\bar{\nu}(t) T_v(t)  u(0)
\label{316}
\end{align}
which has the form of the first line in \eqref{spieg}. It follows that the lhs in \eqref{eqtee} has the structure of a canonical fractional derivative.
A particular case which will be of great interest in the next sections is given by specializing $T_v(t)u(x)=u(x+vt)$ on suitable function spaces, to get from \eqref{316}
\begin{align}
 f \l \partial_t -v \cdot \nabla_x \r (u(t,x)-u(0,x+vt))\,  = \,  &\int_0^\infty \l u(t-s,x+vs) \mathds{1}_{[s \leq t]} - u(t,x) \r \nu (ds) \notag \\
& - \,\bar{\nu}(t) u(0, x+vt).
\label{358}
\end{align}
In this last case our operator provides a rigorous way to define explicitly $\l \partial_t - v \cdot \nabla_x  \r^\alpha$, which is usually understood as a pseudo-differential operator (see Section \ref{secpl} below for details and references).
\end{os}

We proved until now that the function $q_v(t)$ satisfies the Boltzmann-type equation \eqref{eqtee} where the operator $f \l \partial_t-G_v \r$ is obtained with Bochner subordination theory. This operator in the governing equation represents the ``coupling'' between the time evolution (delayed by the inverse subordinator) and the evolution on $\mfB$ (characterized by the non-exponential waiting times), induced by the non Markovian perturbations. Now we show that the function 
\begin{align}
\varphi_{v,v^\prime}(t) \, : = \, \mathds{E}^{v^\prime} T_v{(-t)} \mathcal{T}(t)u(V(t)) \, = \, T_v(-t) q_{v^\prime}(t)
\label{defvp}
\end{align}
satisfies an integro-differential equation of fractional type. We remark that applying $T_v(-t)$ to the random operator $\mathcal{T}(t)$ makes, under $P^v$, the evolution on $\mfB$ constant before the first perturbation induced by the semi-Markov process: 
\begin{align}
T_v(-t)\mathcal{T}(t) u(V(t)) \mathds{1}_{[t \leq \tau_1]} \, = \, u(v) .
\end{align}
Therefore we will see that the equation contains only the scattering component and not the operator $G_v$.

In order to write the equation, let us first recall the canonical form of the fractional derivative (said Caputo-type),
\begin{align}
\mathcal{D}_t^\alpha \phi(t) \, = \,& \partial_t \int_0^t \phi(s) \frac{(t-s)^{-\alpha}}{\Gamma (1-\alpha)} ds - \phi(0) \frac{t^{-\alpha}}{\Gamma (1-\alpha)} \notag \\
= \, & \partial_t \int_0^t \l \phi(s)- \phi(0) \r \frac{(t-s)^{-\alpha}}{\Gamma (1-\alpha)}ds,
\end{align}
which can be generalized by replacing the kernel $s^{-\alpha}/\Gamma (1-\alpha)$ with the tail of the L\'evy measure $\bar{\nu}(s)$ as
\begin{align}
\mathcal{D}_t^f \phi(t) \, := \,& \partial_t \int_0^t \phi(s) \bar{\nu}(t-s) ds - \phi(0) \bar{\nu}(t) \notag \\
= \, & \partial_t \int_0^t \l \phi(s)- \phi(0) \r \bar{\nu}(t-s) ds.
\end{align}
for suitable functions $\phi$. Here is the rigorous statement on the equation.

\begin{prop}
If $\sup_v \left\| u(v) \right\|<\infty$, the function $t \mapsto \varphi_{v,v^\prime}(t)$ defined in \eqref{defvp} satisfies the following problem on $\mfB$
\begin{align}
\mathcal{D}_t^f \varphi_{v,v}(t) \, = \,  \theta_v \int_S \l \varphi_{v,v^\prime}(t) - \varphi_{v,v}(t)  \r \, h_v(dv^\prime), \qquad \varphi_{v,v^\prime}(0) = u(v^\prime),
\label{eqvecchiate}
\end{align}
for any $v, v^\prime \in S$ and $t \geq 0$.
\end{prop}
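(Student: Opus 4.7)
The strategy is to re-use the renewal equation for $\varphi_{v,v}$ that was already obtained inside the proof of Theorem \ref{teeqf}, but this time to repackage its Laplace transform into a Caputo-type equation instead of a Marchaud/Phillips one. Under the sole hypothesis $\sup_v \left\| u(v) \right\| < \infty$, the function $u(V(t))$ is $P^v$-Bochner integrable, so \cite[Theorem 3.1]{koro} applies and, after sending $T_v(-t)$ through the Bochner integrals just as in \eqref{this}--\eqref{take}, one gets
\begin{align*}
\varphi_{v,v}(t) \, = \, u(v) \, P^v(J_1 > t) \, + \, \int_0^t \int_S \varphi_{v,v^\prime}(t-s) \, h_v(dv^\prime) \, P^v(J_1 \in ds).
\end{align*}
This is the starting point; note that $\left\| \varphi_{v,v^\prime}(t) \right\| \leq \left\| q_{v^\prime}(t) \right\| \leq \sup_v \left\| u(v) \right\|$ so every function in sight is bounded and thus locally Bochner integrable with well-defined scalar-type Laplace transform.

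Next I would take the Laplace transform $t \mapsto \lambda$ as in \eqref{lapl1}, using \cite[Proposition 1.6.4]{abhn} for the convolution and the formulas in \cite[eqs (4.5)--(4.6)]{meerpoisson} for the waiting-time transforms. Multiplying both sides of the resulting identity by $\lambda^{-1}$ and rearranging (exactly the manipulation producing \eqref{laplspace}, but kept in this form) yields
\begin{align*}
\frac{f(\lambda)}{\lambda} \, \widetilde{\varphi}_{v,v}(\lambda) \, - \, \frac{f(\lambda)}{\lambda^2} \, u(v) \, = \, \frac{\theta_v}{\lambda} \int_S \l \widetilde{\varphi}_{v,v^\prime}(\lambda) - \widetilde{\varphi}_{v,v}(\lambda) \r h_v(dv^\prime).
\end{align*}
The key observation is that, because $a=b=0$, Fubini gives $\int_0^\infty e^{-\lambda t} \bar{\nu}(t) \, dt = f(\lambda)/\lambda$, so the left hand side is precisely the Laplace transform of $t \mapsto \int_0^t (\varphi_{v,v}(s) - u(v)) \, \bar{\nu}(t-s) \, ds$, while the right hand side is the Laplace transform of $t \mapsto \theta_v \int_0^t \int_S (\varphi_{v,v^\prime}(s) - \varphi_{v,v}(s)) h_v(dv^\prime) \, ds$.

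By uniqueness of the Laplace transform (Bochner version, \cite[Theorem 1.7.3]{abhn}), I get the integrated identity
\begin{align*}
\int_0^t \l \varphi_{v,v}(s) - u(v) \r \bar{\nu}(t-s) \, ds \, = \, \theta_v \int_0^t \int_S \l \varphi_{v,v^\prime}(s) - \varphi_{v,v}(s) \r h_v(dv^\prime) \, ds
\end{align*}
for almost all $t \geq 0$. Both sides are strongly continuous in $t$ (LHS by the continuity properties of Bochner convolution, RHS because its integrand is bounded by $2\theta_v \sup_v \left\| u(v) \right\|$ and is strongly continuous in $s$, since $s \mapsto \varphi_{v,v^\prime}(s) = T_v(-s) q_{v^\prime}(s)$ inherits continuity from the strong continuity of the group $T_v$ and the continuity of $q_{v^\prime}$ supplied by Lemma \ref{lemmadiff} — here one only needs continuity, and boundedness of $u$ suffices). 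Hence the identity holds for all $t \geq 0$. Since the RHS is manifestly the integral of a strongly continuous function, the LHS is strongly differentiable and its derivative equals the integrand of the RHS at $t$; invoking the definition of $\mathcal{D}_t^f$ with $\phi(0) = \varphi_{v,v}(0) = u(v)$ gives \eqref{eqvecchiate}, and the initial condition $\varphi_{v,v^\prime}(0) = T_v(0) q_{v^\prime}(0) = u(v^\prime)$ is immediate.

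The only delicate point is verifying the continuity of $s \mapsto \varphi_{v,v^\prime}(s)$ without invoking the heavier assumptions of Lemma \ref{lemmadiff}: a direct argument based on the renewal representation above (plus strong continuity of $T_v(-s)$) suffices, since $q_{v^\prime}(s)$ depends on $s$ only through the Bochner averages of the group evaluations $T_v(s)$, $P^v(J_1>s)$, and the convolution in $s$, each of which is strongly continuous under the standing hypothesis $\sup_v \left\| u(v) \right\| < \infty$. Everything else is essentially a bookkeeping repetition of the Laplace-space steps already executed in the proof of Theorem \ref{teeqf}.
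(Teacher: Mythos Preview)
Your proposal is essentially the same approach as the paper's: start from the Laplace-space identity \eqref{laplspace} (derived from the renewal equation), invert it to the integrated identity
\[
\int_0^t \l \varphi_{v,v}(s)-u(v) \r \bar{\nu}(t-s)\,ds \;=\; \theta_v \int_0^t \int_S \l \varphi_{v,v^\prime}(s)-\varphi_{v,v}(s) \r h_v(dv^\prime)\,ds,
\]
upgrade from a.e.\ to all $t$ by continuity, and differentiate. The paper carries out exactly these steps, citing \cite[Proposition 1.3.2]{abhn} for continuity of the convolution on the left and referring back to the proof of Theorem \ref{teeqf} for differentiability on the right.

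You are right to flag the continuity of $s\mapsto\varphi_{v,v^\prime}(s)$ as the delicate point: the paper's proof appeals to ``we already proved this in Theorem \ref{teeqf}'', but that argument went through Lemma \ref{lemmadiff}, whose hypotheses are strictly stronger than $\sup_v\|u(v)\|<\infty$. Your proposed direct argument (via the renewal representation and strong continuity of the group) is the natural way to close this, though the sketch you give is not yet a proof; in particular you should make explicit why $s\mapsto q_{v^\prime}(s)$ is strongly continuous under the sole assumption of bounded $u$, since the Dynkin-type representation used in Lemma \ref{lemmadiff} is unavailable without $u(v)\in\mfB_0$.
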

\begin{proof}
Take the representation in the Laplace space given in \eqref{laplspace}
\begin{align}
\frac{f(\lambda)}{\lambda}  \widetilde{\varphi}_{v,v}(\lambda)   -  \frac{f(\lambda)}{\lambda^2} u(v) = \frac{1}{\lambda} \theta_v\int _S  \l  \widetilde{\varphi}_{v,v^\prime}(\lambda) -\widetilde{\varphi}_{v, v}(\lambda) \r h_v(dv').
\label{laplspace2}
\end{align}
Note that \eqref{laplspace2} is still valid here since it comes from the renewal equation \eqref{this}. In the same spirit of the Theorem \ref{teeqf} we first prove that the equation
\begin{align}
\int_0^t \l \varphi_{v,v}(s)- \varphi_{v,v}(0) \r \, \bar{\nu}(t-s) ds \, = \, \int_0^t \theta_v \int_S \l \varphi_{v,v^\prime}(w)  - \varphi_{v,v}(w) \r \, h_v(dv^\prime) dw
\label{intvecchia}
\end{align}
coincides in Laplace space $(t \mapsto \lambda)$ with \eqref{laplspace2}. For the right-hand side we already did this in the proof of Theorem \ref{teeqf}. For the left-hand side first note that since $t \mapsto \varphi_{v,v}(t)$ is in $L^1 \l [0,T]; \mathfrak{B} \r$ for any $T>0$ and $t \mapsto \bar{\nu}(t)$ is in $L^1 \l[0,T]; \mathbb{R}\r$ for any $T>0$, then their Laplace convolution is in $L^1\l [0,T]; \mfB \r$ (see the discussion at the end of \cite[page 22]{abhn}). Then, by an application of the convolution theorem for Laplace transform for Bochner integrals (e.g. \cite[Proposition 1.6.4]{abhn}), using that
\begin{align}
\int_0^\infty e^{-\lambda t} \bar{\nu}(t) dt \, = \, \frac{f(\lambda)}{\lambda},
\end{align}
the equality follows.
We already proved that the rhs of \eqref{intvecchia} is differentiable in Theorem \ref{teeqf}. The lhs is instead continuous since, using that $\bar{\nu}(s) \in L^1\l [0,T], \mathbb{R} \r$, for any $T>0$, and $\left\| \varphi_{v,v}(t) \right\| \leq \sup_v \left\| u(v) \right\|$, we can apply \cite[Proposition 1.3.2 (and the discussion below)]{abhn}. Hence both sides of \eqref{intvecchia} can be differentiated to get, for any $t >0$,
\begin{align}
\mathcal{D}_t^f \varphi_{v,v}(t) \, = \, \theta_v\int_S \l \varphi_{v,v^\prime}(t)  - \varphi_{v,v}(t) \r \, h_v(dv^\prime).
\label{finito}
\end{align}
\end{proof}

\section{Abstract wave equation with semi-Markov damping}
\label{secdw}
We find here a generalized wave equation with damping which governs the expected value of a particular class of random evolutions. The results in this section provide the semi-Markov counterpart of the theory valid in the Markov case, given in \cite[Section 4]{griegohersh}.

In this section we still work under A1) and A2). However, we further assume that:
\begin{enumerate}
\item $\mathcal{S}= \{ v_1, v_2\}$
\item $\theta_{v_1} = \theta_{v_2}=:\theta$, so that $N(t)$ is a renewal counting process.
\item The semigroups $T_{v_1}(t)$ and $T_{v_2}(t)$ commute.
\item The generators $G_{v_1}, G_{v_2}$ are scalar multiples of the same operator, with  the form $G_{v_1}=G$ and $G_{v_2}=-G$, where $G$ generates the group $T(t), t \in \mathbb{R}$.
\end{enumerate}
Assumption $(3)$ means that    $G_{v_1} $ generates $T_{v_1}(t) =T(t)$, $t \geq 0$,  the forward evolution for $T$, while $G_{v_2}=-G$ generates $T_{v_2}(t)=T(-t)$, $t \geq 0$, the backward evolution for $T$.
As a remarkable example, the above hypoteses include the translation group on $C_0(\mathbb{R})$ or $L^1 \l \mathbb{R} \r$,  such that $T(t)u(x)= u(x+t)$ and $G=d/dx$ (in suitable sense).

Without losing the generality of the above hypotheses, we will assume $\mathcal{S}=\{1,-1\}$. Hence, the underlying semi-Markov process can be written in the ``telegraph'' form 
\begin{align}
V(t)= V_0 (-1)^{N(t)} \label{processo del telegrafo}
\end{align}
where $V_0$ is a random variable with values in $\mathcal{S}$ and $N(t)$ is the number of renewals up to time $t$.

As before use $\mathcal{T}(t)$ for the random evolution operator and define
\begin{align*} 
q_{1}(t):= \mathds{E}^1\bigl ( \mathcal{T}(t) u(V(t)) \bigr ) \qquad q_{-1}(t):= \mathds{E}^{-1}\bigl ( \mathcal{T}(t) u(V(t))\bigr ).
\end{align*}
If $V_0$ is a r.v. with $P(V_0=1)=P(V_0=-1)=1/2$, hence
\begin{align}
q(t) \, := \,& \mathds{E} \mathds{E}^{V_0}(\mathcal{T}(t)u(V(t))  ) \notag\\
= \,  & q_1(t)P(V_0=1)+q_{-1} (t)P(V_0=-1) \notag \\
= \, & \frac{q_1(t)+q_{-1}(t)}{2}.\label{media}
\end{align}
In \cite{griegohersh} the authors studied the special case of Markov random evolutions and found a second order abstract equation. By using our notations, we re-formulate their result in the following theorem (for the proof we refer to the original paper). 
\begin{te}\cite[Section 4]{griegohersh}
Let  $u(1)=u(-1):=u$, so that  $u(V(t))=u$ for any $t\geq 0$, where $u\in Dom (G^2)$.
Moreover, let  $P(V_0=1)= P(V_0=-1)=1/2$ and let $\mathcal{N}(t)$ be a Poisson process with intensity $\theta >0$. Then \eqref{media} solves the abstract telegraph equation
\begin{align}
\l \partial_t^2 -G^2 \r q(t)=-2 \theta \partial_t q(t) \label{Damped wave equation},
\end{align}
under the initial conditions $q(0)=u$ and $q'(0)=0$.
\end{te}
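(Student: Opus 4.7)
The plan is to invoke the Markov special case of the Boltzmann-type system (equation \eqref{boltzmann}) to reduce the problem to a coupled first-order system, then decouple it by taking sum and difference, and finally differentiate once more to produce the second-order equation. Since the waiting times are exponential with common rate $\theta$, the operator $f \l \partial_t - G_v \r$ in Theorem \ref{teeqf} reduces (via $f(\lambda) = \lambda$) to $\partial_t - G_v$, and equation \eqref{eqtee} becomes the classical system
\begin{align*}
\partial_t q_1(t) &= G q_1(t) + \theta \l q_{-1}(t) - q_1(t)\r, \\
\partial_t q_{-1}(t) &= - G q_{-1}(t) + \theta \l q_1(t) - q_{-1}(t)\r,
\end{align*}
with $q_{\pm 1}(0) = u$. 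Before exploiting this I would record a regularity remark: since $u \in \text{Dom}(G^2)$ and $T(\pm t)$ commutes with $G$ on $\text{Dom}(G)$ and leaves $\text{Dom}(G^2)$ invariant, the random element $\mathcal{T}(t)u$ lies in $\text{Dom}(G^2)$ almost surely with $G^k \mathcal{T}(t) u$ uniformly bounded in $\omega$, so by closedness of $G$ and $G^2$ and the properties of the Bochner integral we get $q_{\pm 1}(t) \in \text{Dom}(G^2)$ with $G^k q_{\pm 1}(t) = \mathds{E}^{\pm 1} G^k \mathcal{T}(t) u$ for $k=1,2$.

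Next, introduce the auxiliary quantity $p(t) := \l q_1(t) - q_{-1}(t)\r/2$ alongside $q(t) = \l q_1(t) + q_{-1}(t)\r/2$. Adding and subtracting the two equations of the system yields the decoupling
\begin{align*}
\partial_t q(t) &= G p(t), \\
\partial_t p(t) &= G q(t) - 2 \theta p(t),
\end{align*}
since the scattering terms cancel in the sum and the generator terms align with opposite signs in the difference. The initial values are $q(0) = u$ and $p(0) = 0$, which immediately gives $\partial_t q(0) = G p(0) = 0$, verifying the claimed initial conditions.

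Finally, differentiating $\partial_t q = G p$ in time (legitimate since $p \in \text{Dom}(G^2)$ and strong differentiability of $q_{\pm 1}$ follows from the Markov case of Lemma \ref{lemmadiff}) and substituting the equation for $\partial_t p$ gives
\begin{align*}
\partial_t^2 q(t) = G \partial_t p(t) = G\l G q(t) - 2 \theta p(t) \r = G^2 q(t) - 2 \theta G p(t) = G^2 q(t) - 2 \theta \partial_t q(t),
\end{align*}
which rearranges to \eqref{Damped wave equation}.

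The only subtlety, and the step I would be most careful about, is justifying that the strong derivatives and the application of $G$ commute freely: one needs $p(t) \in \text{Dom}(G)$ to form $G p$, and $p'(t) \in \text{Dom}(G)$ (equivalently $q_{\pm 1}(t) \in \text{Dom}(G^2)$) to differentiate $\partial_t q = G p$ a second time. Both follow from the closedness argument outlined above, together with the assumption $u \in \text{Dom}(G^2)$; once this is in place the algebraic manipulation is essentially the same as in the finite-dimensional telegraph process computation.
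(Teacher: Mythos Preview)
Your argument is correct and is essentially the classical derivation: write the two-state Markov system, form the sum $q$ and difference $p$, obtain $\partial_t q = Gp$ and $\partial_t p = Gq - 2\theta p$, and eliminate $p$ by differentiating once more. The algebra and the initial-condition check are fine, and your remark about $q_{\pm 1}(t)\in\text{Dom}(G^2)$ via closedness of $G$ under the Bochner integral is the right justification for interchanging $G$ with $\partial_t$.

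Two small comments. First, the paper itself does \emph{not} give a proof of this statement: it is quoted from \cite{griegohersh} and the authors write ``for the proof we refer to the original paper'', so there is no in-paper proof to compare against; your derivation is precisely the standard one used in that reference. Second, be careful with the sentence ``the operator $f(\partial_t - G_v)$ in Theorem~\ref{teeqf} reduces (via $f(\lambda)=\lambda$) to $\partial_t - G_v$'': Theorem~\ref{teeqf} is stated under assumption A2), which requires a driftless subordinator with infinite activity, so $f(\lambda)=\lambda$ is not literally covered there. It is cleaner to invoke \eqref{boltzmann} directly (as you in fact do at the outset), since that equation is stated independently for the Markov case with references to \cite{griegohersh,koro}.
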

Equation \eqref{Damped wave equation} is also called abstract wave equation with damping. The reason of this name is that, in the case of translation groups on $C_0(\mathbb{R})$, where $G^2= d^2/dx^2$, the abstract equation \eqref{Damped wave equation} is formally equivalent to the classical damped wave equation \eqref{damped markov} as we will see in the next section (the equivalence is only formal because \eqref{Damped wave equation} is  an abstract equation while \eqref{damped markov} is a classical equation).

We observe that (\ref{Damped wave equation}) has an interesting connection 
  to the abstract wave equation
\begin{align}
\partial^2_tw(t)=G^2w(t) \qquad w(0)=u \qquad w'(0)=0,\label{equazione onde}
\end{align}
where $u\in Dom (G^2)$.
It is immediate to verify that  (\ref{equazione onde})  is solved by  the ``free'' evolution 
\begin{align}
w(t)= \frac{1}{2} \bigl ( T(t)u+T(-t)u \bigr ). \label{free evolution}
\end{align}
It is proved by Griego and Hersh \cite{griegohersh} that the solution to \eqref{Damped wave equation} is related with $w(t)$ by the following equality
\begin{align}
q(t) \, = \, \mathds{E} w \l \int_0^t (-1)^{\mathcal{N}(\tau)}  d\tau \r
\end{align}
where $\mathcal{N}(\tau)$ is the underlying Poisson process pacing the jumps of the velocity.
Heuristically, this means that the solution to \eqref{Damped wave equation} is obtained by perturbing the free evolution  \eqref{free evolution} at random times. This causes the origin of the damping term  in  eq. (\ref{Damped wave equation}), making eq.  (\ref{Damped wave equation}) different from eq. (\ref{equazione onde}). This is proved in \cite[Thm. 4]{griegohersh}.
 
In the following Theorem, we show that $q(t)$ in \eqref{media}, also in our general semi-Markov setting, is again the average of the free evolution \eqref{free evolution} with the time $t$ suitably randomized.

\begin{te}
Let $q(t)$ and $w(t)$ be defined in  (\ref{media})       and  (\ref{free evolution}). The following representation holds:
 $$q(t)=\mathds{E}\, w\biggl (\int _0^t (-1)^{N(\tau)}d\tau \biggr ),$$
 where $N(\tau)$ is the underlying renewal counting process.
\end{te}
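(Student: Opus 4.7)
The strategy is to exploit assumptions $(3)$--$(4)$ to collapse the random product of operators in \eqref{deft} into a single element of the group $T$, evaluated at the pathwise integrated velocity $\int_0^t V(s)\,ds$, and then to take expectations under the two possible initial conditions and average.

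First I would establish the pathwise identity
\begin{align*}
\mathcal{T}(t)\,u \;=\; T\!\Bigl(\int_0^t V(s)\,ds\Bigr)\,u, \qquad u \in \mfB.
\end{align*}
Writing out \eqref{deft} and using $T_{1}(\cdot)=T(\cdot)$, $T_{-1}(\cdot)=T(-\cdot)$, the commutativity from $(3)$ and the group law, the random product telescopes into $T\bigl(V(t)(t-\tau_{N(t)})+\sum_{i=1}^{N(t)}v_iJ_i\bigr)$. On each interval $[\tau_{i-1},\tau_i)$ the process $V$ is constantly equal to $v_i$, so $v_iJ_i=\int_{\tau_{i-1}}^{\tau_i}V(s)\,ds$; summing these contributions together with the leftover piece $\int_{\tau_{N(t)}}^{t}V(s)\,ds=V(t)(t-\tau_{N(t)})$ rebuilds $\int_0^t V(s)\,ds$. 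An equivalent, more robust route is to verify that both sides satisfy the Dynkin equation \eqref{reprsemicont} on $\mfB_0$: by assumption $(4)$ one has $G_{V(s)}=V(s)G$, and commutativity makes $s\mapsto T\bigl(\int_0^s V(r)\,dr\bigr)u$ a pathwise solution, so the identity follows by uniqueness and then extends to all of $\mfB$ by contraction and density.

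Using $u(V(t))\equiv u$ and the representation $V(s)=V_0(-1)^{N(s)}$ from \eqref{processo del telegrafo}, one writes $\int_0^t V(s)\,ds=V_0\,I(t)$ with $I(t):=\int_0^t (-1)^{N(s)}\,ds$. Since $N$ is independent of $V_0$, the law of $I(t)$ is the same under $P^{1}$ and $P^{-1}$, and the Bochner expectations below are well defined because $T$ is a contraction. Averaging with the weights $P(V_0=\pm 1)=1/2$ gives
\begin{align*}
q(t)\;=\;\frac{1}{2}\,\mathds{E}\bigl[T(I(t))u+T(-I(t))u\bigr]\;=\;\mathds{E}\,w\bigl(I(t)\bigr),
\end{align*}
which is the claim. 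I expect the only delicate step to be the rigorous proof of the pathwise collapse; the argument via \eqref{reprsemicont} and uniqueness is cleaner than an explicit induction on $N(t)$ because it handles the boundary term $V(t)(t-\tau_{N(t)})$ automatically and does not require separate treatment of the regularity of $u$.
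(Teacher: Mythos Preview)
Your proof is correct and follows essentially the same approach as the paper: the paper collapses the random product via occupation times $H_t^{\pm 1}=\int_0^t\mathds{1}_{\{V(s)=\pm 1\}}ds$, writing $\mathcal{T}(t)u=T_1(H_t^1)T_{-1}(H_t^{-1})u=T(H_t^1-H_t^{-1})u$, which is exactly your $T\bigl(\int_0^t V(s)\,ds\bigr)u$, and then averages over $V_0$ just as you do. Your alternative verification via the Dynkin representation \eqref{reprsemicont} is a nice addition not present in the paper, though unnecessary for this short argument.
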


\begin{proof}
Let us denote the occupation measure of $V(t)$ in the state $v$ by
\begin{align}
H_t^v \, = \, \int_0^t \mathds{1}_{\ll V(s) = v \rr}ds\rev{,} \qquad v=\pm1\rev{,}
\end{align} 
and observe that the difference of occupation times can be written as
\begin{align}
H_t^1 -H_t^{-1}=V_0 \int _0^t (-1)^{N(\tau)}d\tau.
\end{align}
Taking into account that $T_1(t)$ and $T_{-1}(t)$ commute, and using the semigroup property, the random evolution can be re-written as
\begin{align}
\mathcal{T}(t)u = T_1 (H_t^1)\, T_{-1}(H_t^{-1})u \label{r1}
\end{align}
so that the waiting times $J_i$ are no more relevant.     Using the assumption that $T_{\pm 1}(t)=T(\pm t)$, where $T(t)$ is a group, we can write $(\ref{r1})$ as 
\begin{align}
\mathcal{T}(t)u = T (H_t^1)\, T(-H_t^{-1})u =T( H_t^1-H_t^{-1})u = T\biggl (V_0 \int _0^t (-1)^{N(\tau)}d\tau \biggr )u  .  \label{r2}
\end{align}
Let $\gamma _t=\int _0^t (-1)^{N(\tau)}d\tau$.  The expected value of (\ref{r2}) can be written as
\begin{align}
q(t)= \mathds{E} (\mathcal{T}(t)u) &= \frac{1}{2} \biggl ( \mathds{E} (\mathcal{T}(t)u |V_0=1)  +\mathds{E} (\mathcal{T}(t)u|V_0=-1) \biggr ) \notag \\
&= \frac{1}{2}  \biggl (    \mathds{E} \bigl (T (\gamma _t)   u \bigr  )  +\mathds{E} \bigl ( T(-\gamma _t)u         \bigr )   \biggr )   \notag   \\
& = \frac{1}{2}\mathds{E} \biggl ( T(\gamma _t)u + T(-\gamma _t) u  \biggr ) \notag \\
&= \mathds{E}w(\gamma _t)
\end{align}
and the proof is complete.
\end{proof}
We derive now the semi-Markov version of the damped wave equation \eqref{Damped wave equation}. Let us first note that the d'Alembert-type operator on the lhs of \eqref{Damped wave equation} is, formally,
\begin{align}
\square:=\l \partial_t^2-G^2 \r \, = \, \l \partial_t -G \r \l \partial_t +G \r.
\end{align}
In order to write down the governing equation of $q(t)$ in our semi-Markov setting we introduce the compact notation
\begin{align}
&\mathcal{D}^+ :=f \l \partial_t +G \r \label{413}  \\
& \mathcal{D}^- := f \l \partial_t -G \r \label{414}
\end{align}
where the operators on the rhs of \eqref{413} and \eqref{414} are defined as in Section \ref{secdeff}.
Then we define the operator
\begin{align}
\mathcal{D}_{\square} := \mathcal{D}^+ \mathcal{D}^- 
\label{55}
\end{align}
on $L^1 \l [0,T];\mfB \r$ with domain $D \subset L^1 \l [0,T];\mfB \r$.

We observe that when $f(\lambda) = \lambda^\alpha$, the operator in \eqref{55} has the form of the fractional power
\begin{align}
\square^\alpha := \l \partial_t^2 -G^2 \r^\alpha.
\end{align}

Let us first note that, under the more restrictive assumptions in this section, the sets $H^v$, $v \in \ll  -1,1 \rr$ defined in \eqref{defhv} are the same set for any $v$. Hence we will use the notation $H$.

\begin{lem}
\label{commute}
The operators $\mathcal{D}^-$ and $\mathcal{D}^+$ commute on $H \cap D$.
\end{lem}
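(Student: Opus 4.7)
The plan is to reduce the commutativity of $\mathcal{D}^+$ and $\mathcal{D}^-$ to that of the underlying semigroups $U^{\pm 1}_s$ from Section~\ref{secdeff}, and then transfer the conclusion via Phillips' representation. Under the present hypotheses, $T_1(s) = T(s)$ and $T_{-1}(s) = T(-s)$, so $\mathcal{D}^- = f(\partial_t - G_1)$ is the Phillips subordinate of $U^{1}_s u(t) = T(s) u(t-s) \mathds{1}_{[s \leq t]}$, whereas $\mathcal{D}^+ = f(\partial_t - G_{-1})$ is that of $U^{-1}_s u(t) = T(-s) u(t-s) \mathds{1}_{[s \leq t]}$. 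By Lemma~\ref{domfg}, on $H$ both operators admit the explicit form $\mathcal{D}^\pm u = -\int_0^\infty (U^{\mp 1}_s u - u) \, \nu(ds)$.

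The crucial step is to verify that $U^{1}_r$ and $U^{-1}_s$ commute on $L^1([0,T]; \mfB)$ for every $r, s \geq 0$. Unwinding the definitions gives
\begin{align*}
U^{1}_r U^{-1}_s u(t) \, = \, T(r) T(-s) u(t - r - s) \, \mathds{1}_{[r + s \leq t]},
\qquad
U^{-1}_s U^{1}_r u(t) \, = \, T(-s) T(r) u(t - r - s) \, \mathds{1}_{[r + s \leq t]},
\end{align*}
and since $T(\cdot)$ is a group, $T(r) T(-s) = T(r-s) = T(-s) T(r)$, making the two expressions coincide.

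The commutation is then lifted to the operators $\mathcal{D}^\pm$ in one of two equivalent ways. One can pass to the Bochner subordinates $\mathfrak{U}^\pm_t = \int_0^\infty U^{\mp 1}_s \, P(\sigma(t) \in ds)$, which commute as bounded operators by a routine Fubini (both families are contractions and $\sigma(t)$ has a scalar law), and then differentiate the identity $\mathfrak{U}^+_t \mathfrak{U}^-_s u = \mathfrak{U}^-_s \mathfrak{U}^+_t u$ in $s$ and $t$ while invoking closedness of the generators, reaching $\mathfrak{A}^+ \mathfrak{A}^- u = \mathfrak{A}^- \mathfrak{A}^+ u$; or one can work directly at the level of the Phillips integrals and write
\begin{align*}
\mathcal{D}^+ \mathcal{D}^- u \, = \, \int_0^\infty \int_0^\infty (I - U^{-1}_r)(I - U^{1}_s) u \, \nu(ds) \, \nu(dr),
\end{align*}
whose integrand is symmetric in $(r, s)$ by the previous step, so Fubini swaps the order of integration and produces $\mathcal{D}^- \mathcal{D}^+ u$. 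Translating via $\mathfrak{A}^\pm|_H = -\mathcal{D}^\pm$ closes the loop.

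The main obstacle is a domain bookkeeping issue: Phillips' theorem only asserts the inclusion $\mathrm{Dom}(\mathfrak{A}^\pm) \supset H$, while $\mathcal{D}^\pm$ is defined strictly on $H$, so the generator-level commutation must be upgraded to live inside $H$. The cleanest workaround is to justify absolute convergence of the double Bochner integral above using the estimate $\|(I - U^{-1}_r)(I - U^{1}_s) u\|_{L^1} \leq r s \, \|A^{U_{-1}} A^{U_1} u\|_{L^1}$ near the origin, valid on $H \cap D$ since the commutativity of $U^{\pm 1}$ with the opposite generator on $H$ is an immediate consequence of step one, together with the global bound $4 \|u\|_{L^1}$; both are integrable against $\nu \otimes \nu$ thanks to the Bernstein condition $\int_0^\infty (s \wedge 1) \nu(ds) < \infty$.
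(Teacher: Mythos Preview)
Your proposal is correct and follows essentially the same route as the paper: both arguments write $\mathcal{D}^\pm$ via the Phillips integral against $\nu$, exploit the commutativity $U^1_r U^{-1}_s = U^{-1}_s U^1_r$ (coming from $T(r)T(-s)=T(-s)T(r)$, i.e.\ the group property), and conclude by Fubini for the double Bochner integral, with membership in $D$ supplying the integrability. The paper merely expands the double integral explicitly and kills the boundary terms using $\phi(0)=\bm{0}$ before invoking Fubini, whereas you phrase the same computation abstractly as $(I-U^{-1}_r)(I-U^{1}_s)=(I-U^{1}_s)(I-U^{-1}_r)$; the content is the same.
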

\begin{proof}
Note that
\begin{align}
f \l \partial_t -G \r f \l \partial_t+G \r \phi (t) \,
= \, &\int _0^\infty \int _0^\infty \biggl [  \phi (t)-T(s')\phi (t-s')\mathds{1}_{[s'<t]}-T(t)\phi (0) \mathds{1}_{[s'\geq t]} \notag\\   
& -T(-s)\phi  (t-s)\mathds {1}_{[s<t]} +\mathds{1}_{[s<t]}\mathds{1}_{[s'<t-s]}T(s'-s) \phi (t-s-s') \notag \\
&  +\mathds{1}_{[s<t]} \mathds{1}_{[s'\geq t-s]} T(t-2s)\phi (0)\biggr ] \nu (ds)\nu(ds').\label{integrale doppio} 
\end{align}
Since $\phi \in H$, it follows that $\phi (0) = \bm{0}$ and therefore we have from \eqref{integrale doppio}
\begin{align}
 f \l \partial_t -G \r f \l \partial_t+G \r \phi (t) = \, & \int _0^\infty \int _0^\infty \biggl [  \phi (t)-T(s')\phi (t-s')\mathds{1}_{[s'<t]}  
 -T(-s)\phi  (t-s)\mathds {1}_{[s<t]} \notag \\
 & + \mathds{1}_{[s<t]}\mathds{1}_{[s'<t-s]}T(s'-s) \phi (t-s-s') \bigg] \nu (ds)\nu(ds')  \notag \\
 = \, & f \l \partial_t +G \r f \l \partial_t-G \r \phi(t)
\end{align}
where exhange in the order of integrals 
is guaranteed by Fubini Theorem for double Bochner integrals (\cite[Thm. 1.1.9]{abhn}), since $\phi \in D$.
\end{proof}

We are now in position to state the following result, which is a semi-Markov counterpart of eq. (\ref{Damped wave equation}). Before stating the following theorem we introduce the functions $w^{\pm}: [0, \infty) \mapsto \mfB$ defined as
\begin{align}
w^{\pm}(t):= T(\pm t) u.
\end{align}
We remark that $w^{\pm}$ are the unique solutions to the wave equation, under suitable initial conditions:
\begin{align}
\l \partial_t^2 - G^2 \r \phi(t) = 0, \qquad \phi(0) = u, \qquad \phi^\prime (0) = \pm Gu.  
\end{align}

\begin{te}  \label{teorema finale}
Let  $u(1)=u(-1):=u$, so that  $u(V(t))=u$ for any $t\geq  0$, where $u\in Dom (G^2)$.
Moreover, let  $P(V_0=1)= P(V_0=-1)=1/2$. Then
\eqref{media}
satisfies the equation on $\mfB$
\begin{align} \label{Equazione secondo ordine}
\mathcal{D}_\square & \l q(t)- w(t) \r \, = \,  -\theta \mathcal{D}^-  \l q(t)- w^+(t) \r - \theta \mathcal{D}^+ \l q(t)-w^-(t)\r,
\end{align}
under the initial condition $q(0)=u$, where $w^{\pm}(t) = T(\pm t) q(0)$ and $w(t)$ are the free evolution (see \eqref{free evolution} for $w(t)$), i.e., the unique solutions to the abstract wave equation \eqref{equazione onde} under appropriate initial conditions, according to the above definitions.
\end{te}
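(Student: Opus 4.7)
The plan is to reduce \eqref{Equazione secondo ordine} to the pair of first-order coupled equations already furnished by Theorem \ref{teeqf} and then assemble them into second-order form by means of Lemma \ref{commute}. With $\mathcal{S} = \ll -1, 1 \rr$, $h_1(\ll -1 \rr) = h_{-1}(\ll 1 \rr) = 1$, $G_1 = G$, $G_{-1} = -G$, $T_1(t) = T(t)$, $T_{-1}(t) = T(-t)$, and $u(1) = u(-1) = u$, Theorem \ref{teeqf} gives at once, on $\mfB$,
\begin{align*}
\mathcal{D}^- \l q_1(t) - w^+(t) \r \, & = \, \theta \l q_{-1}(t) - q_1(t) \r, \\
\mathcal{D}^+ \l q_{-1}(t) - w^-(t) \r \, & = \, \theta \l q_1(t) - q_{-1}(t) \r,
\end{align*}
and these two identities will be the only ingredients fed into the argument.

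Next, I would apply $\mathcal{D}^+$ to the first equation and $\mathcal{D}^-$ to the second. By Lemma \ref{commute} both compositions coincide with $\mathcal{D}_\square$ on the relevant domain, so summing and using the identity $2(q - w) = (q_1 - w^+) + (q_{-1} - w^-)$ (clear from \eqref{media} and from $w = (w^+ + w^-)/2$) yields
\begin{align*}
2\, \mathcal{D}_\square \l q(t) - w(t) \r \, = \, \theta\, \mathcal{D}^-(q_1 - q_{-1})(t) \, - \, \theta\, \mathcal{D}^+(q_1 - q_{-1})(t).
\end{align*}
For the right-hand side of \eqref{Equazione secondo ordine} the key observation is the algebraic splitting $q - w^+ = (q_1 - w^+) + (q_{-1} - q_1)/2$ and symmetrically $q - w^- = (q_{-1} - w^-) + (q_1 - q_{-1})/2$; applying $\mathcal{D}^-$ and $\mathcal{D}^+$ respectively and substituting the two first-order equations above for $\mathcal{D}^-(q_1 - w^+)$ and $\mathcal{D}^+(q_{-1} - w^-)$ yields
\begin{align*}
\mathcal{D}^-(q - w^+)(t) \, & = \, \theta\l q_{-1}(t) - q_1(t)\r + \tfrac{1}{2}\mathcal{D}^-(q_{-1} - q_1)(t), \\
\mathcal{D}^+(q - w^-)(t) \, & = \, \theta\l q_1(t) - q_{-1}(t)\r + \tfrac{1}{2}\mathcal{D}^+(q_1 - q_{-1})(t).
\end{align*}
Summing these two lines the two scalar $\theta$-terms cancel, and multiplying by $-\theta$ reproduces precisely the expression already obtained for $2\,\mathcal{D}_\square(q - w)$, giving \eqref{Equazione secondo ordine}. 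The initial condition $q(0) = u$ follows from \eqref{media} and $q_{\pm 1}(0) = u$.

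The main technical obstacle is checking that each function on which $\mathcal{D}^\pm$ or $\mathcal{D}_\square$ is actually applied lies in the appropriate set. The functions $q_v - w^v$ for $v \in \ll -1, 1 \rr$ belong to the common set $H := H^1 = H^{-1}$ by Lemma \ref{lemmaexist}; the remaining differences $q_1 - q_{-1} = (q_1 - w^+) - (q_{-1} - w^-) + (w^+ - w^-)$ lie in $H$ provided $t \mapsto T(t)u - T(-t)u$ does, which in turn follows from $u \in \mathrm{Dom}(G^2)$ together with the representation of $T(\pm t)u$ as solution of the abstract wave equation (so that $w^{\pm}(0) = u$, $w^+ - w^-$ vanishes at $0$ and has $L^1$ derivative on any $[0,T]$). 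Once these membership checks are in place, Lemma \ref{commute} legitimately exchanges $\mathcal{D}^+$ and $\mathcal{D}^-$ in the computations above, and $\mathcal{D}_\square$ is well-defined on every function appearing in \eqref{Equazione secondo ordine}, closing the argument.
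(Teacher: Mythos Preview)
Your proof is correct and follows essentially the same route as the paper's: both start from the two first-order equations supplied by Theorem \ref{teeqf}, verify the domain memberships via Lemma \ref{lemmaexist} together with $w^+-w^-\in H$, apply $\mathcal{D}^\pm$ to each, invoke Lemma \ref{commute}, and then combine. The only difference is bookkeeping: the paper first rewrites the right-hand sides in terms of $q_1-w^+$, $q_{-1}-w^-$ and $w^+-w^-$, then uses the cross-relation $\mathcal{D}^+(q_{-1}-w^-)=-\mathcal{D}^-(q_1-w^+)$ to decouple before summing, whereas you sum first and decompose $q-w^\pm$ afterwards; your organisation is marginally shorter. One wording slip: after multiplying by $-\theta$ you obtain $\mathcal{D}_\square(q-w)$, i.e.\ \emph{half} of the expression you had for $2\,\mathcal{D}_\square(q-w)$, which is exactly \eqref{Equazione secondo ordine} --- so the conclusion is right even though the sentence misstates the factor.
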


\begin{proof}

The equations given in Theorem \ref{teeqf} split into
\begin{align}
&\mathcal{D}^- \l q_{1}(t) -T(t)q_1(0) \r    = -\theta  q_{1}(t)+\theta  q_{-1}(t) ,\label{Z11} \\      
&\mathcal{D}^+\l q_{-1}(t) -T(-t)q_{-1}(0) \r =  \theta  q_{1}(t)-\theta  q_{-1}(t) . \label{Z22}
\end{align}
under $q_{1}(0)=q_{-1}(0)=u$.
By adding and subtracting the appropriate terms we have
\begin{align}
&\mathcal{D}^- \l q_{1}(t) -T(t)q_1(0) \r    = -\theta  \l q_{1}(t) - T(t)q_1(0)\r+\theta  \l q_{-1}(t) - T(-t) q_{-1}(0) \r  \notag \\ & - \theta\l T(t) - T(-t) \r u , \label{521}\\      
&\mathcal{D}^+\l q_{-1}(t) -T(-t)q_{-1}(0) \r =  \theta \l  q_{1}(t) - T(t)q_1(0)\r-\theta  \l q_{-1}(t)-T(-t)q_{-1}(0)\r \notag \\
& + \theta \l T(t) - T(-t) \r u . \label{522}
\end{align}
By Lemma \ref{lemmaexist} we know that both $q_1 (t)-T(t)q_1(0)$ and $q_{-1}(t)-T(-t)q_{-1}(0)$ are in $H$ and therefore they lie in the domain of $\mathcal{D}^+$ and $\mathcal{D}^-$ and further, by linearity, also their linear combination does. Further, since $u \in \mfB_0$ it is clear that $\l T(t)-T(-t) \r u \in H$ and thus by Lemma \ref{domfg} it also lies in the domain of $\mathcal{D}^+$. So we can apply  $\mathcal{D}^+$ to both sides of \eqref{521}. By analogous considerations, we can apply $\mathcal{D}^-$ to both sides of \eqref{522}. It follows that $\l q_1 (t)-T(t)q_1(0) \r$ and $q_{-1}(t)-T(-t)q_{-1}(0)$ are elements of $D \cap H$.
Hence we can apply Lemma \ref{commute} to get
\begin{align}
&\mathcal{D}_{\square} \l q_{1}(t) -T(t)q_1(0) \r    = -\theta \mathcal{D}^+  \l q_{1}(t) - T(t)q_1(0)\r+\theta \mathcal{D}^+ \l q_{-1}(t) - T(-t) q_{-1}(0) \r\notag \\ & -\theta\mathcal{D}^+\l T(t) - T(-t) \r u ,\label{426}\\      
&\mathcal{D}_{\square}\l q_{-1}(t) -T(-t)q_{-1}(0) \r =  \theta \mathcal{D}^-\l  q_{1}(t) - T(t)q_1(0)\r-\theta  \mathcal{D}^-\l q_{-1}(t)-T(-t)q_{-1}(0)\r \label{427}\notag \\
& +\theta\mathcal{D}^- \l T(t) - T(-t) \r u . 
\end{align}
Using now \eqref{521} and \eqref{522} we see that
\begin{align}
\mathcal{D}^+ \l q_{-1}(t) - T(-t) q_{-1}(0) \r \, = \, -\mathcal{D}^- \l q_{1}(t) - T(t) q_{1}(0) \r
\end{align}
and substituting in \eqref{426} and \eqref{427} we obtain
\begin{align}
&\mathcal{D}_{\square} \l q_{1}(t) -T(t)q_1(0) \r    = -\theta \mathcal{D}^+  \l q_{1}(t) - T(t)q_1(0)\r-\theta \mathcal{D}^- \l q_{1}(t) - T(t) q_{1}(0) \r\notag \\ & - \theta\mathcal{D}^+\l T(t) - T(-t) \r u , \\      
&\mathcal{D}_{\square}\l q_{-1}(t) -T(-t)q_{-1}(0) \r = - \theta \mathcal{D}^+\l  q_{-1}(t) - T(-t)q_{-1}(0)\r-\theta  \mathcal{D}^-\l q_{-1}(t)-T(-t)q_{-1}(0)\r \notag \\
& +\theta\mathcal{D}^-  \l T(t) - T(-t) \r u ,
\end{align}
and again by linearity
\begin{align}
&\mathcal{D}_{\square} \l q_{1}(t) -T(t)q_1(0) \r    = -\theta \mathcal{D}^+  \l q_{1}(t) - T(-t)u\r-\theta \mathcal{D}^- \l q_{1}(t) - T(t) u \r \label{sum1} \\ 
&\mathcal{D}_{\square}\l q_{-1}(t) -T(-t)q_{-1}(0) \r = - \theta \mathcal{D}^+\l  q_{-1}(t) - T(-t)u\r-\theta  \mathcal{D}^-\l q_{-1}(t)-T(t)u\r. \label{sum2}
\end{align}
By summing \eqref{sum1} and \eqref{sum2} we get
\begin{align}
\mathcal{D}_\square\l 2q(t) - (T(t)+T(-t))u \r \, = \, -\theta \mathcal{D}^+ \l 2q(t) - 2 T(-t)u \r - \theta \mathcal{D}^- \l 2q(t) -2 T(t)u \r
\label{div2}
\end{align}
and then dividing by 2 both sides of \eqref{div2} we obtain the result.
\end{proof}

\section{Transport with infinite mean flight times and superdiffusion}

We here consider a model for scattering transport such that the waiting times between velocity changes (collisions) have infinite expectation. Therefore this model substantially differs from the classical Markov case with exponentially distributed time intervals, in particular for what concerns scaling limits. It is a classical result (e.g. \cite{Watanabe}) that a transport process with uniform velocity changes paced by i.i.d. exponential r.v.'s converges, after a suitable scaling limit, to a Brownian motion, therefore exhibiting a diffusive behaviour (the mean square displacement grows like $Ct$, $C>0$, as $t \to \infty$) and infinite velocity. However the exponential waiting time is not crucial to have this behaviour: if one takes arbitrary finite mean waiting times, then in the long run (or after a scaling limit) the convergence is still to a diffusive process (see, for example, \cite[chapter 3]{koro}). However the infinite expectation case seems to be always ruled out by classical assumptions in this literature. In this section we are able to deal with the infinite expectation case using some CTRWs scaling limit theory. In particular we show that a suitable scaling yields to a superdiffusive transport process whose one-dimensional distribution is supported (when the process starts from the origin) on
\begin{align}
\mathpzc{B}^{d}_t := \ll z \in \mathbb{R}^d: \left\| z\right\|_d \leq t \rr.
\end{align}
This agrees with the intuition because the longer flight times tend to be (on average) longer (heavy tailed) than in the exponential (or finite mean) case and permits a space scaling as fast as the time scaling.

Here is a more rigorous discussion.

Consider a semi-Markov model of transporting particle in $\mathbb{R}^d$ \bt{as follows}. It is assumed that the particle originating at $x\in \mathbb{R}^d$ moves along the unit vector $v_1$ with constant velocity $1$, until it has a collision after a random waiting time $J_1$; then the particle moves (again with constant velocity $1$) along the unit direction $v_2$ for a random time $J_2$, and so on. Let the $v_i$ have uniform distribution on the unit sphere 
\begin{align}
S^{d-1}:= \ll v  \in \mathbb{R}^d : \left\| v \right\|_d=1  \rr,
\end{align}
where $\left\| \cdot \right\|_d$ stands for the euclidean norm, independently on the past history, and let the $J_i$ be i.i.d. random variables in $\mathbb{R}^+$.
Hence, let  $V(t)$ be a semi-Markov chain on $S^{d-1}$, representing the unit velocity vector of the moving particle, i.e.,
\begin{align*}
V(t)= v_n  \qquad      \tau _{n-1}    \leq  t < \tau _{n}, \qquad n \in \mathbb{N}
\end{align*}
where $\tau _n= \sum _{i=1}^{n} J_i$ denotes the time of the $n$-th scattering, with $\tau _0=0$.

Let, for $x \in \mathbb{R}^d$,
\begin{align}
X(t)= x+\int _0^ t V(\tau)d\tau \label{processo posizione}
\end{align}
 be a continuous additive functional of $V$, representing the position of the particle.
With the usual notation
\begin{align*}
N(t) = \max \{ n\geq 0: \tau _n\leq t\},
\end{align*}
the number of scatterings up to time $t$, we can re-write (\ref{processo posizione}) as 
\begin{align}
X(t)=  x+\sum _{i=1} ^{N(t)} J_iv_i + \l t-\tau _{N(t)} \r v_{N(t)+1} .
\label{processo posizione 2}
\end{align}

We call $(X(t), V(t))$ semi-Markov isotropic transport process.

For any $x\in \mathbb{R}^d$ and $v\in  S^{d-1}$ we denote the one-dimensional distribution of the process, started at $x$ and $v$, as
\begin{align}
p_t(dz, dw|x,v):= P(X(t)\in dz, V(t)\in dw| V(0)=v), \label{transition probability}
\end{align}
which is compactly supported on the set 
$$\{(z,w)\in \mathbb{R}^d\times \mathcal{S}^{d-1}:||z-x||_d\leq t, w\in \mathcal{S}^{d-1}\}.$$
Denoting by  $\mathds{E}^{v}$ the integration with respect to the measure (\ref{transition probability}), we define the mean value
\begin{align}
g(x,v,t)=  \mathds{E}^{v} h(X(t), V(t)), \label{semigruppo semi markov}
\end{align}
where $h\in C _0  (\mathbb{R}^d \times \mathcal{S}^{d-1})$. Clearly,  (\ref{semigruppo semi markov}) is nothing more that the mean value defined in (\ref{meanvalue}) 
with  reference to the particular random evolution
\begin{align}
T_{V(t)} (t- \tau _{N(t)})     \cdots T_{v_2}(J_2)  T_{v_1} (J_1)
\end{align}
where $T_{v_i}$ are translations groups on  $C_0(\mathbb{R}^d \times \mathcal{S}^{d-1})$ acting on the coordinate $x \in \mathbb{R}^d$, namely $T_v(t)h(x,v^\prime):= h(x+vt,v^\prime)$, for any $v,v^\prime \in \mathcal{S}^{d-1}$.

In the special case where the waiting times have exponential law with mean $1/\theta$,  then $V(t)$ is a continuous time Markov chain and also $(X(t), V(t))$ is jointly Markov. Such a process has been studied in several papers (e.g. \cite{monin, Watanabe}). In this case it is true that \eqref{semigruppo semi markov} defines  a strongly continuous contraction semigroup on $ C_0(\mathbb{R}^d \times \mathcal{S}^{d-1})$ equipped with the sup-norm (consult \cite[section 2]{Watanabe});  for all  $h\in C _0^1 (\mathbb{R}^d \times \mathcal{S}^{d-1})$ the following equation holds
\begin{align}
\partial_t g(x,v,t)= v\cdot \nabla _x g(x,v,t) + \theta \int _{S^{d-1}} (g(x,v',t)-g(x,v,t))\mu (dv'), \label{equazione coppia}
\end{align}
under the condition $g(x,v,0)= h(x,v)$, where $\mu(\cdot)$ is the uniform measure on $\mathcal{S}^{d-1}$. 
Eq. \eqref{equazione coppia}, also known as linear Boltzmann equation, is the backward equation for the Markov process $(X(t), V(t))$ and the operator acting on the right side is the infinitesimal generator.

In the framework of statistical physics, one of the things that makes this Markov  process important is the fact that it is an approximation of a diffusion process; indeed $X(t)$ converges to a Brownian motion by re-scaling the space variable as $x\to cx$ and the time variable as $t\to t c ^2$ and letting $c \to \infty$.  For a discussion on this point, consult, for example, \cite[sections 3 and 4]{Watanabe}.

We stress again that, alongside this classic result, there is another important fact, which perhaps is not so well known: a large class of semi-Markov transport processes shares the same asymptotic property which leads to a limit diffusion.  This means that, in the scaling limit of small and rapid jumps, the exponential distribution of waiting times (and the consequent Markovianity of the process) is not a crucial condition to have convergence to Brownian motion. Rather, the only thing that matters is that the waiting times have finite mean and variance. We refer to the next Remark (\ref{oss}) for a sketched proof, and to \cite[Section 4.3]{koro} for a more general discussion. The reader can consult also \cite{Papanicolaou} for a general theory concerning the limits of Markov transport processes. Moreover, for an example of a transport process whose flight times have finite mean but are not exponential, consult \cite{alessandro}, where the authors assume Dirichlet distributed flight times.

Thus, transport processes with infinite mean waiting times are cut  out from the above consideration. But yet, semi-Markov processes with such a property have proven to be a fundamental tool in statistical physics, especially in models of anomalous diffusions (see, for example, \cite{FCbook, Metzler} and references therein). In Section \ref{sectionconvergence}, we focus on this aspect and find new results in this direction using some CTRWs limit theory.

\subsection{Transport processes with heavy-tailed flight times}
\label{secpl}
We here consider a particular type of semi-Markov process $V(t)$ on $\mathcal{S}^{d-1}$, whose waiting times exhibit power-law decaying densities, with infinite mean and variance.

In order to construct such a process, we refer to the time-change assumption introduced in section \ref{assumptions}. Hence, assume that $\sigma ^{\alpha} (t)$ is a stable subordinator with index $\alpha \in (0,1)$, corresponding to the Bernstein function  $f(\lambda)= \lambda ^\alpha$, and let $L^{\alpha}(t)$ denote its inverse; if $W(t)$ is a continuous time Markov chain on $\mathcal{S}^{d-1}$, whose waiting times are exponentially distributed with mean $1/\theta$, we construct the time-changed process
\begin{align}
V_\alpha(t):= W(L^\alpha (t)).\label{qqq}
\end{align}
By applying    (\ref{intertempi}), the  waiting times $J_n$ of $V_\alpha(t)$ follow the heavy tailed distribution
\begin{align}
P(J_n>t)=\mathds{E}e^ {-\theta L^\alpha (t)}= \mathcal{E}_\alpha  (-\theta t^{\alpha}) , \qquad \alpha  \in (0,1], \label{mittag}
\end{align}
where
$$  \mathcal{E}_\alpha (x):= \sum _{k=0}^\infty \frac{x^k}{\Gamma (1+\alpha k)} $$
denotes the so-called Mittag Leffler function. It is not hard to see that waiting times with these distribution have infinite expectation: one has indeed by \cite[Theorem 2.1]{meertoa} that
\begin{align}
P \l J_n > t \r  \, = \, \mathcal{E}_\alpha (-\theta t^\alpha) \, \stackrel{\infty}{\sim} \, \frac{1}{\theta} \frac{t^{-\alpha}}{\Gamma (1-\alpha)}
\label{511}
\end{align}
from which it easily follows that $\mathds{E}J_n=\infty$.

Note that the Markov case is formally re-obtained by putting $\alpha =1$, whence we obtain the exponential waiting times $P(J_n>t)=e^{-\theta t}$.

Denote now
\begin{align}
X_\alpha (t) := x+ \int_0^t V_\alpha (s) ds .
\end{align}
Note that in the one-dimensional case this process has already been considered in \cite{meoli}, where the authors studied several distributional properties (e.g., the distribution of the first passage time of the process).

We now prove that in this framework the transport process $(X_\alpha(t), V_\alpha(t))$ is governed by an equation which is formally similar to the Boltzmann equation \eqref{equazione coppia}, except that the material derivative $\partial_t -v\cdot \nabla _x$ on the left-hand side will be replaced by its fractional power $(\partial_t -v\cdot \nabla _x)^\alpha$.
This operator has been considered in several papers as a pseudo-differential operator with Laplace-Fourier $(x,t) \mapsto (\xi, \lambda)$ symbol $\l \lambda + i v \cdot \xi \r^\alpha$ and associated with the so-called L\'evy flights or L\'evy walks (e.g. \cite{magdalevy, fracmat}). The reader can consult, for example, \cite{barkai, klafter} for an introduction to the theory of L\'evy walks and some interesting applications.

We provide here the mathematical background of this idea by showing how these processes are related to semi-Markov random evolutions and how this fractional material derivative can be adjusted to be included in our general framework. In other words, we reformulate part of the theory of Section \ref{secre} in terms of pseudo-differential operators. 

So, let us consider the Banach space of functions $L^1([0,T] \times \mathbb{R}^d )$ endowed with its natural norm.
Moreover, let $H$ be the subset of $L^1([0,T] \times \mathbb{R}^d )$ whose elements have value zero at $t=0$ and are absolutely continuous functions such that their first derivatives (in time and space) are in $L^1([0,T] \times \mathbb{R}^d)$. Finally, let $T_v(t)$ be the translation operator, such that $T_v(s)h(x,t)=h(x+vs,t)$, and let $\Gamma _s$ denote the killed time shift, i.e., $\Gamma _s h(x,t)=h(x,t-s)\mathds {1}_{\left[s\leq t \right]}$. 
Consider the family of operators $\{ U_s^v\} _{s\geq 0}$
 defined by
\begin{align}
U_s^v h(x,t):=T_v(s)\Gamma _sh(x,t) = \begin{cases}
h(x+vs,t-s), \qquad & s\leq t, \\ 0,&s>t.
\end{cases}
\end{align}
By Lemma \ref{lemmadt}, $(U_s^v) _{s\geq 0}$ defines a strongly continuous contraction semigroup on $L^1([0,T] \times \mathbb{R}^d )$. On functions $h\in H$, the generator of $\{ U_s^v\} _{s\geq 0}$ has the form $-\frac{\partial}{\partial t}+v\cdot \nabla _x$. Consider the subordinate semigroup $(\mathfrak{U}_{s}^v )_{s \geq 0}$  defined by the following Bochner integral
$$ \mathfrak{U}_{s}^v h= \int _0^\infty U_{s'}^v   h \,  P(\sigma^\alpha(s)\in ds').
$$
By  Phillips theorem (see \cite{librobern},   Theorem 12.6), $(\mathfrak{U}_{s}^v) _{s \geq 0}$ is again a strongly continuous contraction semigroup on $L^1([0,T]\times \mathbb{R}^d)$ and, if $A^U_v$ is the generator of $U_s^v$, then the generator of $\mathfrak{U}_{s}^v$ is $-(-A^U_v)^\alpha$ which is defined at least on $Dom(A_v^U)$. For $h\in H$, since $A_v^U$ takes the form $-\frac{\partial}{\partial t}+v\cdot \nabla _x$, then  $-(-A^U_v)^\alpha$ has the form
\begin{align}
-\l\partial_t-v\cdot \nabla _x\r^\alpha\, h(x,t)  := &\int _0^\infty (U_s^vh(x,t)-h(x,t)) \frac{\alpha s^{-\alpha -1}}{\Gamma (1-\alpha)}ds \notag \\
= \, &  \int _0^\infty (h(x+vs, t-s)\mathds{1}_{[s \leq t]}-h(x,t)) \frac{\alpha s^{-\alpha -1}}{\Gamma (1-\alpha)}ds.
\label{bochpoint}
\end{align}
It turns out that we can take the last line in \eqref{bochpoint} as a Lebesgue integral, and use it to define the operator $\l \partial_t - v \cdot \nabla_x \r^\alpha$ as a Lebesgue integral.
We are now in position to prove the following result, which extends the linear Boltzmann equation \eqref{equazione coppia} to this kind of semi-Markov processes.

\begin{prop} \label{eq random flight}
Let $x \mapsto h(x,v)$ be in $L^1 \l \mathbb{R}^d \r$ for any $v$ and such that \revbt{$\sup_{v} \left\| h(\cdot,v) \right\|_{L^1}< +\infty$ and $\sup_{v}\left\| v \cdot \nabla_x h(\cdot,v) \right\|_{L^1}< +\infty$}. Denote
\begin{align}
q(x,v,t)= \mathds{E}^{v}h(X_\alpha(t), V_\alpha(t)). \label{expected}
\end{align}
Then $q(x,v,t)$ solves the following equation
\begin{align}
\l\partial_t-v\cdot \nabla _x\r^\alpha  q(x,v,t)- \frac{t^{-\alpha}}{\Gamma (1-\alpha)} h(x+vt,v) \, = \,  \theta \int _{S^{d-1}} (q(x,v',t)-q(x,v,t))\mu (dv')
\end{align}
where $\mu$ denotes the uniform measure on $\mathcal{S}^{d-1}$ and the operator $\l\partial_t-v\cdot \nabla _x\r^\alpha$ is defined as a Lebesgue integral, in the sense that the lhs exists for all $v \in S$ and almost all $(x,t)$.
Further, \revbt{it satisfies the equation in the Fourier-Laplace space}
\begin{align}
\l\lambda +i\xi \cdot v \r^\alpha \widehat{\widehat{q}}(\xi,v, \lambda)  \, = \, \l \lambda +  i\xi \cdot v \r^{\alpha-1}  \widehat{h}(\xi, v)+ \theta \int_{S^{d-1}} (\widehat{\widehat{q}}(\xi,v^\prime, \lambda)-\widehat{\widehat{q}}(\xi,v, \lambda))\mu (dv').
\label{eqpseudo}
\end{align}
\end{prop}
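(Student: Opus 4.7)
The plan is to derive the Fourier--Laplace identity \eqref{eqpseudo} first, and then interpret the pointwise equation as its inverse transform. First, I would condition on the first scattering time $J_1$ to produce the renewal equation
\begin{align*}
q(x,v,t) \, = \, h(x+vt, v)\, P(J_1>t) + \int_0^t \int_{S^{d-1}} q(x+vs, v', t-s)\, \mu(dv')\, P(J_1 \in ds),
\end{align*}
which is the concrete version of the identity used at the start of the proof of Theorem \ref{teeqf}, with $T_v(s)$ realised as the translation $x\mapsto x+vs$ and $h_v$ replaced by the uniform measure $\mu$. Integrability of $q(\cdot, v, t)$ on $\mathbb{R}^d$ for each $v,t$ follows from $|q(x,v,t)| \leq \mathds{E}^v \phi(X_\alpha(t))$, Fubini, and translation invariance of Lebesgue measure, which gives $\left\| q(\cdot, v, t) \right\|_{L^1} \leq \left\| \phi \right\|_{L^1}$. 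Both sides of the renewal equation therefore admit a Fourier transform in $x$ and a Laplace transform in $t$.

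Next, I would apply these transforms with the convention $\widehat{f}(\xi) = \int e^{i\xi \cdot x} f(x)\, dx$, so that translation by $vs$ multiplies the Fourier transform by $e^{-i\xi \cdot vs}$. Inside each time integral, the factor $e^{-\lambda t} e^{-i\xi \cdot vs}$ converts the $s$-integration into a Laplace evaluation at the shifted argument $\lambda + i\xi \cdot v$. Combined with the stable-subordinator Laplace formulas
\begin{align*}
\int_0^\infty e^{-\lambda t}\, P(J_1 \in dt) \, = \, \frac{\theta}{\theta + \lambda^\alpha}, \qquad \int_0^\infty e^{-\lambda t}\, P(J_1 > t)\, dt \, = \, \frac{\lambda^{\alpha-1}}{\theta + \lambda^\alpha},
\end{align*}
this yields
\begin{align*}
\widetilde{\widehat{q}}(\xi, v, \lambda) \, = \, \frac{(\lambda + i\xi \cdot v)^{\alpha-1}}{\theta + (\lambda + i\xi \cdot v)^\alpha}\, \widehat{h}(\xi, v) + \frac{\theta}{\theta + (\lambda + i\xi \cdot v)^\alpha} \int_{S^{d-1}} \widetilde{\widehat{q}}(\xi, v', \lambda)\, \mu(dv').
\end{align*}
Clearing the denominator and collecting the scattering term as a difference delivers \eqref{eqpseudo}.

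For the pointwise equation I would invert \eqref{eqpseudo} term by term. The Bernstein representation $\mu^\alpha = \int_0^\infty (1-e^{-\mu s})\, \alpha s^{-\alpha - 1}/\Gamma(1-\alpha)\, ds$ with $\mu = \lambda + i\xi \cdot v$ identifies $(\lambda + i\xi \cdot v)^\alpha \widetilde{\widehat{q}}$ as the Fourier--Laplace transform of the Lebesgue integral on the right of \eqref{bochpoint} applied to $q$, since the combined translation factor $e^{-i\xi \cdot vs} e^{-\lambda s}$ matches the symbol of $U_s^v$ in Fourier--Laplace variables. The identity $\int_0^\infty e^{-(\lambda + i\xi \cdot v)t}\, t^{-\alpha}/\Gamma(1-\alpha)\, dt = (\lambda + i\xi \cdot v)^{\alpha-1}$, combined with the fact that $e^{-i\xi \cdot v t}\widehat{h}(\xi, v)$ is the Fourier transform of $h(x+vt, v)$, shows that $(\lambda + i\xi \cdot v)^{\alpha - 1}\widehat{h}(\xi, v)$ is the Fourier--Laplace transform of $t^{-\alpha}/\Gamma(1-\alpha)\cdot h(x+vt, v)$. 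Uniqueness of Fourier--Laplace inversion then produces the pointwise equation of the statement.

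The main obstacle will be justifying that the Lebesgue integral defining $(\partial_t - v\cdot\nabla_x)^\alpha q$ converges for almost every $(x,t)$. The contribution from $s > t$ equals $q(x,v,t)\, t^{-\alpha}/\Gamma(1-\alpha)$, finite wherever $q$ is finite, and the piece $s \in (\varepsilon, t)$ is controlled by the $L^1$-in-$x$ bound on $q$ together with Fubini. The delicate part is the near-zero singularity $s \downarrow 0$: the difference $q(x+vs,v,t-s) - q(x,v,t)$ must decay fast enough against $s^{-\alpha-1}$. I would extract the required regularity from the renewal representation itself, since the convolution with the density $\mu^\alpha_s(w)$ of the stable subordinator smooths $q$ in $t$, while translation invariance handles the $x$-direction, so that a Marchaud-type argument yields convergence almost everywhere consistent with the claim in the statement.
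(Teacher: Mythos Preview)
Your derivation of the Fourier--Laplace identity \eqref{eqpseudo} via the renewal equation, the shift $\lambda\mapsto\lambda+i\xi\cdot v$, and the Mittag--Leffler Laplace formulas is exactly the route the paper takes, and your identification of the two symbols $(\lambda+i\xi\cdot v)^\alpha$ and $(\lambda+i\xi\cdot v)^{\alpha-1}$ with the operator and the correction term is also the paper's.

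Where you diverge is in justifying that the Lebesgue integral defining $(\partial_t-v\cdot\nabla_x)^\alpha q$ exists for almost every $(x,t)$. You propose a pointwise Marchaud-type argument, controlling the near-zero singularity by extracting regularity of $q$ in $(x,t)$ from smoothing by the stable density. The paper does \emph{not} do this, and under the stated hypotheses your route looks hard to close: only $h(\cdot,v)\in L^1(\mathbb{R}^d)$ is assumed, with no differentiability, so there is no obvious source of the $O(s^\beta)$, $\beta>\alpha$, decay of $q(x+vs,v,t-s)-q(x,v,t)$ that a pointwise argument would need. The time-smoothing you invoke acts only through the waiting-time law and gives no direct control on the spatial increment $x\mapsto x+vs$.

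The paper instead works entirely in $L^1$. It first bounds $\int_{\mathbb{R}^d}|q(x,v,t)|\,dx\le\|\phi\|_{L^1}$ uniformly in $t$ (essentially your bound), so $q(\cdot,v,\cdot)\in L^1([0,T]\times\mathbb{R}^d)$. Then it computes the triple integral
\[
\int_0^\infty\!\!\int_{\mathbb{R}^d} e^{-\lambda t}e^{i\xi\cdot x}\Bigl(\int_0^\infty\bigl(q(x,v,t)-q(x+vs,v,t-s)\mathds{1}_{[s\le t]}\bigr)\tfrac{\alpha s^{-\alpha-1}}{\Gamma(1-\alpha)}\,ds\Bigr)dx\,dt
\]
by Fubini, obtaining $(\lambda+i\xi\cdot v)^\alpha\,\widehat{\widehat{q}}(\xi,v,\lambda)$, and infers from the finiteness of the iterated integral that the inner $s$-integral (i.e.\ the operator applied to $q$) lies in $L^1([0,T]\times\mathbb{R}^d)$, hence is finite for a.e.\ $(x,t)$. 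This sidesteps the $s\downarrow 0$ singularity completely: no pointwise regularity of $q$ is ever invoked. Your argument would be cleaner and would actually match the stated hypotheses if you replaced the Marchaud analysis by this $L^1$/Fubini step.
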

\begin{proof}
From the construction of the process and the law of total probability, by conditioning on the first jump time (see \cite[Theorem 3.1]{koro}) and using the Markov property of semi-Markov processes at jump times, the following renewal equation holds
\begin{align}
q(x,v,t) \, = \, h(x+vt,v)P^{v} \l J_1>t \r  + \int_0^t P^v \l J_1 \in ds \r \int_{S^{d-1}} q(x+vs,v^\prime, t-s) \mu(dv^\prime).
\label{5166}
\end{align}
\revbt{It follows by the assumptions on $h$ that, for any $T>0$,
\begin{align}
\int_0^T \int_{\mathbb{R}^d} |q(x,v,t)| dx \, dt \, \leq \, \sup_{v}\left\| h(\cdot,v) \right\|_{L^1 }  \, T \, < +\infty 
\end{align}
}and thus the Fourier-Laplace transform ($x \mapsto \xi, t \mapsto \lambda$) of $q(x, v, t)$ exists.
Hence we apply the Fourier transform in $x$ to both members of \eqref{5166}, to obtain
\begin{align*}
\widehat{q}(\xi,v,t)= e^{-i\xi \cdot  vt} \widehat{h}(\xi,v) P^v(J_1>t)+  \int _0^t P^v(J_1\in ds)e^{-i\xi \cdot vs}  \int _{S^{d-1}} \mu (dv')  \widehat{q}(\xi, v', t-s)  .
\end{align*}
This allows to have a convolution operator in $t$ on the right side. 
Then, by applying the Laplace transform in $t$, we get
\begin{align*}
\widehat{\widehat{q}}   (\xi,v,\lambda)= \widehat{h}(\xi,v)  \frac{(\lambda+i\xi \cdot v)^{\alpha -1}}{ \theta + (\lambda +i\xi \cdot  v)^{\alpha} }   +   \frac{\theta}{\theta + (\lambda +i\xi \cdot v)^\alpha}\int _{S^{d-1}} \mu (dv')  \widehat{\widehat{q}}(\xi, v', \lambda)  
\end{align*}
which can be re-arranged as
\begin{align*}
\widehat{\widehat{q}}   (\xi,v,\lambda) (\lambda +i\xi \cdot v)^{\alpha} =(\lambda +i\xi \cdot  v)^{\alpha -1}  \widehat{h}(\xi,v) +\theta\int _{S^{d-1}} (  \widehat{\widehat{q}}(\xi, v', \lambda)  - \widehat{\widehat{q}}(\xi, v, \lambda) )  \mu (dv').
\end{align*}
This proves that \eqref{eqpseudo} is verified.
Moreover, \revbt{by the assumption on $h$, we can apply the results of the previous section to say that} 
\begin{align}
(t,x) \mapsto \l \partial_t - v \cdot \nabla_x \r^\alpha q(x,v,t)
\end{align}
is in $L^1 \l [0,T] \times \mathbb{R}^d\r$ for any $T>0$ and $v \in \mathcal{S}^{d-1}$.
\revbt{We also note that}
\begin{align*}
&\int _0^\infty \int _{{\mathbb{R}}^d} e^{-\lambda t}e^{i\xi\cdot x} \l\partial_t-v\cdot \nabla _x\r^\alpha q(x,v,t) dxdt\\
& = \int _0^\infty \int _{\mathbb{R}^d} e^{-\lambda t}e^{i\xi\cdot x}   \biggl (   \int _0^\infty (-q(x+vs,v, t-s)\mathds{1}_{[s\leq t]}+q(x,v,t)) \frac{\alpha s^{-\alpha -1}}{\Gamma (1-\alpha)}ds        \biggr )           dxdt\\
&=\int _0^\infty e^{-\lambda t}  \biggl ( \int _0^\infty  (-e^{-i\xi\cdot v s} \widehat{q}(\xi,v, t-s)\mathds{1}_{[s\leq t]}+ \widehat {q}(\xi,v,t) )\frac{\alpha s^{-\alpha -1}}{\Gamma (1-\alpha)}ds      \biggr ) dt\\
& = \widehat{\widehat{q}}(\xi,v,\lambda) \int _0^\infty (1-e^{-s(\lambda +i\xi\cdot v)}) \frac{\alpha s^{-\alpha -1}}{\Gamma (1-\alpha)}ds\\
& = \widehat{\widehat{q}}(\xi,v, \lambda) (\lambda +i\xi\cdot v)^\alpha. 
\end{align*}

\end{proof}

\begin{os} \normalfont
We  observe that the distribution of $X_\alpha(t)$  has support on $||z-x||_d\leq t$ and  has a discrete component on the sphere $||z-x||_d=t$, the last one being the probability that $V_\alpha(t)$ has no jumps up to time $t$:
\begin{align*}
P(||X_\alpha(t)-x||_d=t)= \mathcal{E}_\alpha(-\theta t^\alpha)
\end{align*}
where $\mathcal{E}_\alpha(\cdot)$ is the Mittag-Leffler function.

Whenever \eqref{transition probability} has a Lebesgue density $p_t(z,w|x,v)$ on the open ball $||z-x||_d<t$ (this is true, for example, in the case of isotropic Markov transport process, consult \cite{stadje} for the explicit expression) by Proposition \ref{eq random flight} the following backward equation holds on the set $\left\| z-x\right\|_d\leq t$:
\begin{align}
&(\partial_t-v\cdot \nabla _x)^\alpha p_t(z,w|x,v)  -\delta(z-x-vt)\delta(w-v)\frac{t^{-\alpha}}{\Gamma (1-\alpha)}\notag \\
&= \theta \int _{S^{d-1}} ( p_t(z,w|x,v')-p_t(z,w|x,v) )\mu (dv'), 
\label{backtelfrac}
\end{align}
where $\delta(x)$ denotes the Dirac delta.
Hence, on the open set $||z-x||_d< t$, the density solves the following forward equation
\begin{align}
(\partial_t+w\cdot \nabla _z)^\alpha p_t(z,w|x,v)= \theta \int _{S^{d-1}} (p_t(z,w'|x,v)-p_t(z,w|x,v) )\mu (dw'). \label{forward}
\end{align}
\end{os}
\begin{os}
For another model of random flight process whose governing equations exhibit fractional operators, consult \cite{garra}.
\end{os}

\subsection{Convergence to a superdiffusive transport process}
\label{sectionconvergence}
We prove in this section that, under a suitable scaling, the process $X_\alpha(t)$, defined in the previous section, converges in distribution to a superdiffusive process. It turns out that this limiting process can be represented as a transport process with continuous paths.

In order to study a scaling limit  of $X_\alpha(t)$, we recall here some basic notions from the theory of CTRWs (see, for example,  \cite{meer annals probab, meerstra}). In this section we adopt the notation used for $X_\alpha(t)$, with the intepretation given in the CTRWs setting. 

Hence consider i.i.d. random vectors $(J_i, Y_i)$, where $Y_i \in \mathbb{R}^d$ represents a particle jump and $J_i \in \mathbb{R}^+$ is the waiting time preceeding that jump.
Let $Y(n)=Y_1+\dots +Y_n$ represent the location of the  particle after $n$ jumps and $\tau_n= J_1+\dots +J_n$    denote the time of the $n$-th jump. Moreover, let $N(t)=  \max \{n\geq 0: \tau_n\leq t \}$ be the renewal counting process representing the number of jumps up to time $t$. A CTRW is defined as
$$Y(N(t))=  \sum _{i=1}^{N(t)} Y_i.$$
 The wording ``coupled'' CTRWs means that $J_i$ and $Y_i$ are stochastically dependent.

By formula \eqref{processo posizione 2} we see that, apart from the initial position $x$, the process $X(t)$ can be written as a particular CTRW with waiting times $J_i$ and jumps $Y_i=J_iv_i$ (this is a L\'evy walk in the sense of \cite{magdalevy}) plus a corrective term
\begin{align*}
\epsilon(t) : = (t-\tau_{N(t)}  ) v_{N(t)+1}.
\end{align*}

Our goal is to understand wheter the scaled process
\begin{align}X_\alpha^c(t)& := x+c^{-H} \int_0^{ct} V_\alpha(s) ds     \label{scaled process}\notag \\
& = x+c ^{-H}   \sum _{i=1}^{N(ct)} J_i v_i + c^{-H}  \epsilon (ct)
\end{align}
converges, as $c\to \infty$, to some stochastic process for some $H>0$.

\begin{os} \normalfont \label{oss}
As mentioned in the previous section, if $J_i$ have finite expectation and variance, it is well known that a Brownian motion is obtained in the limit when $H=1/2$. The diffusive behavior is caused by some facts, which we recall here in a heuristic way, without claiming to be exhaustive. First note that 
when the r.v.'s $J_i$ have finite mean $1/\theta$ the renewal theorem states that $N(ct)\approx c\theta t$. Further $\left\| \epsilon(ct) \right\| \leq c^{-H}J_{N(ct)+1} \approx c^{-H} J_{c\theta t}$ and the latter quantity tends to zero a.s., for any $H>0$.
Now, if the $J_i$ also have finite
variance, putting $H=1/2$ and using central limit arguments, we have that  $c^{-1/2}Y([ct]) \to B(t)$, where $B(t)$ is a Brownian motion in $\mathbb{R}^d$.
Combining these two facts, we have   $$X^c(t)= x+c^{-1/2} Y(N(ct))+c^{-1/2} \epsilon (ct) \approx  x+ c^{-1/2} Y([c\theta t])   \to x+ B(\theta t),$$ hence the time-change simply yields a change of scale in the limit process.
\end{os}
In our case the r.v.'s $J_i$ are i.i.d. with infinite expectation. We show here that this permits to obtain a limit process with the scaling $x \to c^{-1/\alpha}x$, $t\to c^{1/\alpha}t$, $\alpha \in (0,1)$, i.e., putting $H=1$ in \eqref{scaled process} and renaming $c$ as $c^{1/\alpha}$ for convenience.

When studying this kind of limit of $X_\alpha(t)$, there are two problems to address. One is that the Mittag-Leffler distributed waiting times $J_i$ have infinite expectation and variance, so there is no hope to apply  the arguments  of Remark \ref{oss}. The second is that the quantity $c^{-1/\alpha}\epsilon (c^{1/\alpha}t)$ does not converge to zero and gives a contribution to the limit. 
In order to distinguish the two components we rewrite \eqref{scaled process} as
\begin{align}
X_\alpha^c (t) \,: = \, x+ Y_\alpha^c (t)+ \epsilon_c^\alpha (t)
\end{align}
where 
\begin{align}
Y_\alpha^c (t):= c^{-1/\alpha}Y(N(c^{1/\alpha}t)),
\label{5266}
\end{align}
and
\begin{align}
\epsilon_c^\alpha (t):= c^{-1/\alpha} \epsilon (c^{1/\alpha}t).
\end{align}
It is instructive to deal with the two components separately, therefore we first study $Y_\alpha^c$ and $\epsilon_c^\alpha$ and then we prove the convergence of the sum $X_\alpha^c$.

We begin with $Y_\alpha^c$ which is a particular CTRW whose waiting times belong to the domain of attraction of a stable law. In what follows, we refer to \cite{meer annals probab, meerstra} (and references therein) for the corresponding theory of scaling limits for this kind of coupled CTRWs.

Since $J_i$ has a Mittag-Leffler distribution, then the $J_iv_i$ belong to the domain of attraction of a rotationally invariant $\alpha$-stable law, hence 
$$ c^{-1/\alpha} \sum_{i=1}^{[ct]}J_iv_i \overset{\text{fdd}}{\to} A(t)$$
where $A(t)$ is a rotationally invariant stable process and $\overset{\text{fdd}}{\to}$ denotes convergence of all finite-dimensional distributions.
Moreover, the time $\tau_{n}$ of the $n$-th scattering and the renewal counting process $N(t)$ are such that 
$$
c^{-1/\alpha} \tau_{[ct]} \overset {\text{fdd}}{\to} \sigma^\alpha (t) \qquad c^{-1} N([c^{1/\alpha}t]) \overset{\text{fdd}}{\to} L^\alpha(t)
$$
where $\sigma^\alpha (t)$ is a $\alpha$-stable subordinator and $L^\alpha(t)$ is its inverse. The reader can consult \cite[Section 6.4]{FCbook}) for the above results. Heuristically, combining the above results, we should have for the scaled process
\begin{align} 
Y_\alpha^c(t)=  c^{-1/\alpha} \sum_{i=1}^{N(c^{1/\alpha}t)} J_i v_i  = \,  c^{-1/\alpha}\sum_{i=1}^{cc^{-1}N(c^{1/\alpha}t)} J_i v_i \approx A\l L^\alpha (t) \r.
\label{526}
\end{align}
Actually this is not exactly true: it will turn out that the process $A \l L^\alpha (t) \r$ is the limit of the overshooting CTRW, i.e., the process $$c^{-1/\alpha}\sum_{i=1}^{N(c^{1/\alpha}t)+1} J_iv_i$$
while our process converges to $A(L^\alpha(t)-)$. The  role of the following theorem is to make rigorous the above heuristic idea.
\begin{os} \normalfont
In particular, the theorem below will clarify what is the stochastic dependence between $L^\alpha(t)$ and $A(t)$, which can be described heuristically as follows. The process $L^\alpha(t)$ is the first passage time through level $t$ of a stable subordinator $\sigma^\alpha (t)$, while $A(t)$ is the sum of displacements whose direction is uniformly choosen on $\mathcal{S}^{d-1}$ and whose length is equal the jump of the stable subordinator $\sigma^\alpha(t)$. It follows that $\left\| A(L^\alpha(t)-) \right\|_d \leq \sigma^\alpha \l L^\alpha(t)- \r < t$, a.s., since $A(L^\alpha(t)-)$ is the position of the process $A(t)$ when the subordinator is in the starting point of the jump over $t$. It is clear from the construction of $A(t)$ that, on any finite interval of time, it performs a countable infinity of displacements with length less than $\delta$, for any $\delta$ and a finite number with length more than $\delta$, for some $\delta>0$.
\end{os}
Also, some physical properties of the limit process are interesting. It turns out that $A \l L^\alpha(t)- \r$ is superdiffusive (we recall that a process   is said to be superdiffusive if its mean square displacement grows as $t^{\gamma}$, with $\gamma >1$; in our case we find that $\gamma =2$).

\begin{te}
\label{teconvctrw}
Let $Y_\alpha^c(t)$ be the process defined in  \eqref{5266}. Suppose (without loss of generality), that the waiting time distribution \eqref{mittag} has $\theta =1$. Let $\mathpzc{e}_s$ be a Poisson point process on $\mathbb{R}^d \times (0, \infty)$ with the intensity 
\begin{align}
K(dx, ds) = \int_{S^{d-1}} \delta_{vs}(dx) \frac{\alpha s^{-\alpha-1}}{\Gamma (1-\alpha)}ds \,  \mu(dv).
\end{align}
Define the process on $\mathbb{R}^d \times (0, \infty)$
\begin{align}
\l A(t), \sigma^\alpha(t) \r := \sum_{s \leq t} \mathpzc{e}_s.
\end{align}
Then the following is true.

\begin{enumerate}

 \item $A(t)$ and $\sigma^\alpha(t)$ are stochastically dependent, with joint distribution given by
$$ \mathds{E}e^{-\lambda \sigma^\alpha(t)+i \xi\cdot A(t)} =e^{-t\psi (\xi,\lambda)} , \qquad \xi\in \mathbb{R}^d ,\lambda \geq 0,$$
where
\begin{align}
\psi (\xi,\lambda)=   \int _{S^{d-1}} (\lambda -i\xi \cdot v)^\alpha \mu (dv).
\end{align}
The marginal processes $A(t)$ and $\sigma^\alpha (t)$ are, respectively, a rotationally invariant $\alpha$-stable process and an $\alpha$-stable subordinator. 

 \item We have that $Y_\alpha^c(t)$ converges, in the sense of one-dimensional distributions, to the process $M(t):= A \l L^\alpha(t)- \r$ as $c\to \infty$ where $L^\alpha(t) := \inf \ll s \geq 0 : \sigma^\alpha(s) > t \rr$.

\item For all $t>0$, $M(t)$ has distribution with Fourier-Laplace transform
\begin{align}
\psi ^M(\xi , \lambda):=   \int _0^\infty e^{-\lambda t} \mathbb{e}\mathds{E}e^{i\xi\cdot M(t)}dt = \frac{\lambda ^{\alpha -1}}{\psi (\xi,\lambda)}. \label{psi M}
\end{align}

\item The limit process $M(t)$ is self-similar with scaling rate $M(ct)\overset{d}{=}cM(t)$, hence it has a superdiffusive behavior with mean square displacement
$$\mathds{E}\left\| M(t) \right\|^2_d=t^2\mathds{E}\left\| M(1) \right\|^2_d.$$

\end{enumerate}

\end{te}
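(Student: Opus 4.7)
The plan is to handle the four assertions in sequence, leveraging the Poisson point process construction of $(A,\sigma^{\alpha})$, the coupled CTRW limit theory of \cite{meer annals probab,meerstra}, and the stable scaling.

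For (1), the exponential formula for Poisson point processes gives immediately
\[
\mathds{E}e^{-\lambda\sigma^{\alpha}(t)+i\xi\cdot A(t)}=\exp\Bigl(-t\iint\bigl(1-e^{-\lambda s+i\xi\cdot x}\bigr)K(dx,ds)\Bigr).
\]
Substituting $K$, swapping the order of integration and using the elementary identity $\int_{0}^{\infty}(1-e^{-zs})\alpha s^{-\alpha-1}/\Gamma(1-\alpha)\,ds=z^{\alpha}$ for $\mathrm{Re}\,z\geq 0$ (with $z=\lambda-i\xi\cdot v$) produces exactly $\psi(\xi,\lambda)=\int_{S^{d-1}}(\lambda-i\xi\cdot v)^{\alpha}\mu(dv)$. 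Setting $\xi=0$ recovers the $\alpha$-stable Laplace exponent $\lambda^{\alpha}$; setting $\lambda=0$ and symmetrising via $v\mapsto -v$ (which leaves $\mu$ invariant) turns the integrand into $|\xi\cdot v|^{\alpha}\cos(\pi\alpha/2)$, a rotation-invariant function of $|\xi|$, identifying $A(t)$ as a rotationally invariant $\alpha$-stable process.

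For (2), the waiting times $J_{i}$ belong to the normal domain of attraction of $\sigma^{\alpha}(1)$ by \eqref{511}, and the coupled summands $(J_{i}v_{i},J_{i})$---i.i.d.\ with rotationally uniform direction---lie jointly in the generalised domain of attraction of $(A(1),\sigma^{\alpha}(1))$ under the normalisation $c^{-1/\alpha}$. This is exactly the setting of the coupled CTRW limit theorem of \cite{meerstra}; applied to $Y_{\alpha}^{c}(t)=c^{-1/\alpha}Y(N(c^{1/\alpha}t))$ it delivers convergence (in Skorohod $J_{1}$, hence of one-dimensional distributions) to $A(L^{\alpha}(t)-)$. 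The undershooting choice $L^{\alpha}(t)-$, rather than $L^{\alpha}(t)$, reflects the fact that at time $t$ the jump straddling $t$ has not yet been counted (the CTRW, not OCTRW, convention).

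For (3), I would use the Poisson compensation formula. Since $L^{\alpha}$ is continuous and $\{r:L^{\alpha}(r)=s\}=[\sigma^{\alpha}(s-),\sigma^{\alpha}(s))$ at each jump epoch $s$ of $\sigma^{\alpha}$, Fubini yields
\[
\int_{0}^{\infty}e^{-\lambda t}e^{i\xi\cdot M(t)}dt=\frac{1}{\lambda}\sum_{s}e^{i\xi\cdot A(s-)-\lambda\sigma^{\alpha}(s-)}\bigl(1-e^{-\lambda\Delta\sigma^{\alpha}(s)}\bigr).
\]
Taking expectations and applying the compensation formula for the jumps of $(A,\sigma^{\alpha})$ converts the sum into
\[
\int_{0}^{\infty}\mathds{E}\bigl[e^{i\xi\cdot A(s)-\lambda\sigma^{\alpha}(s)}\bigr]ds\cdot\iint(1-e^{-\lambda u})K(dx,du)=\frac{1}{\psi(\xi,\lambda)}\cdot\lambda^{\alpha},
\]
and dividing by $\lambda$ gives \eqref{psi M}.

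Finally, (4) is pure scaling. The joint self-similarity $(A(c^{\alpha}s),\sigma^{\alpha}(c^{\alpha}s))\overset{d}{=}c(A(s),\sigma^{\alpha}(s))$, inherited from (1), implies by inversion that $L^{\alpha}(ct)\overset{d}{=}c^{\alpha}L^{\alpha}(t)$, jointly with the corresponding time change of $A$; composing yields $M(ct)\overset{d}{=}cM(t)$, whence $\mathds{E}\left\|M(t)\right\|_{d}^{2}=t^{2}\mathds{E}\left\|M(1)\right\|_{d}^{2}$. I expect the main obstacle to be step (2): correctly pinning down the undershooting limit $A(L^{\alpha}(t)-)$ as opposed to the OCTRW limit $A(L^{\alpha}(t))$ requires the full coupled weak-convergence machinery of \cite{meerstra}, and the joint $J_{1}$-convergence of $(Y,\tau)$ to the coupled Lévy pair $(A,\sigma^{\alpha})$ is delicate because long waiting times are paired with large jumps.
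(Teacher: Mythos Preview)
Your proof is correct and follows the same overall architecture as the paper's, but with two noteworthy differences in execution. For item (2), the paper does not simply assert that the summands $(J_iv_i,J_i)$ lie in the domain of attraction of $(A(1),\sigma^\alpha(1))$; it \emph{verifies} this by a direct Fourier--Laplace computation, exploiting the explicit Mittag--Leffler Laplace transform $\mathds{E}e^{-zJ_1}=(1+z^\alpha)^{-1}$ to show that $\mathds{E}\exp\{n^{-1/\alpha}(-\lambda\tau_n+i\xi\cdot Y(n))\}\to e^{-\psi(\xi,\lambda)}$, and then invokes \cite[Theorem~3.4]{meer annals probab} (rather than \cite{meerstra}) for the one-dimensional convergence. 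Your appeal to the coupled CTRW machinery is legitimate, but the paper's explicit calculation is what actually pins down $\psi$ and makes the hypotheses of the cited theorem checkable. For item (3), the paper simply cites \cite[Corollary~4.3]{meer annals probab}, whereas your compensation-formula argument is a self-contained direct derivation---this is a genuine alternative and arguably more transparent, since it makes visible exactly where the factor $\lambda^{\alpha-1}$ comes from (the $\lambda^\alpha$ from the jump intensity divided by the extra $\lambda$ from integrating over each plateau of $L^\alpha$). For item (4), the paper verifies self-similarity by checking the scaling identity $c^{-1}\psi^M(\xi,c^{-1}\lambda)=\psi^M(c\xi,\lambda)$ directly on the transform \eqref{psi M}, whereas you argue at the process level via the joint $\alpha$-self-similarity of $(A,\sigma^\alpha)$; both routes are short and equivalent.
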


\begin{proof}
We prove each item of the theorem separately.
\begin{enumerate}

\item     
Note that, by definition, we have
\begin{align}
\l A(t) , \sigma^\alpha (t) \r \, = \, \sum_{\mathcal{X} \in \Pi} g(\mathcal{X})
\end{align}
where $\Pi$ is a Poisson process on $\mathbb{R}^d \times (0, \infty) \times [0,t]$ with the mean measure $K(dx, ds) dt$ and $g(x,s,t)=(x,s)$.
Further
\begin{align}
\int_{\mathbb{R}^d \times (0, \infty) \times [0,t]} \l \left\| g(x,s,t) \right\|_{d+1} \wedge 1 \r  K(dx, ds) dt < \infty
\end{align}
and thus it follows from Campbell theorem (e.g., \cite[page 28]{king}) that, for $\underline{\xi}=(i\xi, -\lambda)$, $\xi \in \mathbb{R}^d$, $\lambda\geq 0$,
\begin{align}
\mathds{E}e^{\underline{\xi} \cdot \l A(t), \sigma^\alpha (t) \r} \, = \,& \mathds{E}e^{i\xi \cdot A(t) -\lambda \sigma^\alpha(t)} \notag \\
 = \,& e^{-t\int_{\mathbb{R}^d\times(0, \infty)} \l 1-e^{i\xi \cdot x-\lambda s} \r K(dx, ds)} \notag \\
= \, & e^{-t \int_{S^{d-1}}\l \lambda -i\xi \cdot v \r^\alpha \mu(dv)}.
\label{uscamp}
\end{align}
It follows from \eqref{uscamp}  that
\begin{align}
  &\mathds{E}e^{-\lambda \sigma^\alpha (t)} = e^{-t\lambda^\alpha} \\ &\mathds{E}e^{i\xi \cdot A(t)}=e^{-tB \left\| \xi \right\|_d^\alpha},
  \label{531}
\end{align} 
where $B$ is the constant depending on $\alpha$ and $d$, given by
\begin{align}
B:=\cos (\pi \alpha /2) \int_{S^{d-1}} | v \cdot \mathpzc{1}|^\alpha \mu(dv),
\end{align}
where $\mathpzc{1}$ is any unit vector.
In \eqref{531} we used \cite[Example 6.24]{FCbook} to say that
\begin{align}
\int_{S^{d-1}} \l - i \xi \cdot v \r^\alpha \mu(dv) \, = \, \int_{S^{d-1}} \l  i \xi \cdot v \r^\alpha \mu(dv) \, = \, B \left\| \xi \right\|_d^\alpha.
\end{align}
Therefore, marginally, $A(t)$ is a rotationally invariant $\alpha$-stable process while $\sigma(t)$ is an $\alpha$-stable subordinator.
\item Since $(J_i, v_i)$, $i=1\dots n$, are i.i.d., we can write the Fourier-Laplace transform of $n^{-1/\alpha} (Y(n), \tau_n   )$    in the following way
\begin{align} 
\mathds{E}\exp  \ll   n^{-1/\alpha} \bigl (-\lambda \tau_n+i\xi\cdot Y(n)   \bigr ) \rr                                                                                                                           =  \, &   \mathds{E}\exp \ll-\sum _{i=1}^n    n^{-1/\alpha}   \bigl (  \lambda -i\xi \cdot v _i    \bigr )J_i \rr   \notag \\
                                                                                                                          = \, & \biggl ( \mathds{E} \exp \ll-n^{-1/\alpha}  \bigl (\lambda -i\xi\cdot v_1 \bigr ) J_1 \rr    \biggr ) ^n. \label{f1}
\end{align}
By using conditional expectation, \eqref{f1} can be re-written as
\begin{align}                                                                                                                        &\l  \int _{S^{d-1}} \mathds{E} \left[\exp \{-n^{-1/\alpha}  \l\lambda -i\xi\cdot v_1 \r J_1 \} \mid \, v_1=v \right] \, \mu (dv) \r ^n \notag \\                                                                                                                         = \, & \l  \int _{S^{d-1}}     \frac{1}{1+ n^{-1}  (\lambda -i\xi\cdot v )^\alpha   }                         \mu (dv)       \r ^n \notag \\                                                                                                                          = \, &\l 1-\frac{1}{n} \int _{S^{d-1}} (\lambda -i\xi\cdot v)^\alpha \mu (dv) +o\l n^{-1} \r \r^n \notag \\                                                                                                                        \overset{n\to \infty}{\longrightarrow} \, &  \exp \biggl  \{ -  \int _{S^{d-1}} (\lambda -i\xi\cdot v)^\alpha \mu (dv)   \biggr \} \notag \\
                                                                                                                           = \, & e^{-\psi (\xi,\lambda)}                                                                                                                            \label{trovophi}                                                                                                                         
\end{align}
which means that $n^{-1/\alpha} \bigl ( Y(n) , \tau_n \bigr ) \overset{d}{\to} \bigl ( A(1), \sigma^\alpha (1)\bigr )$. 
Therefore  $c^{-1/\alpha} \bigl ( Y([ct], \tau_{[ct]}) \bigr ) \overset{d}{\to} \bigl ( A(t), \sigma^\alpha (t)\bigr )$, where the limit has Fourier-Laplace transform $\exp\ll -t\psi (\xi,\lambda)\rr$. The result then follows by using \cite[Theorem 3.4]{meer annals probab}: we have that $X_\alpha^c(t) \overset{d}{\to}A(L^\alpha(t)-)$ for any fixed $t>0$.

\item This comes from an application of \cite[Corollary 4.3]{meer annals probab}.

\item It is easy to check that the Fourier-Laplace transform of $M(t)$ satisfies the condition $c^{-1} \psi ^M(\xi,c^{-1}\lambda)=  \psi ^M (c\xi,\lambda)$, from which one has $M(ct)\overset{d}{=}cM(t)$ and therefore $M(t)\overset{d}{=} tM(1)$ so that $\mathds{E}\left\| M(t) \right\|^2_d= t^2\mathds{E}\left\|M(1)\right\|_d^2$. This concludes the proof.
\end{enumerate}
\end{proof}

\begin{os} \normalfont
Let $h(x,t)$ be a density of $M(t)$. Then, following the lines of  \cite[page 748]{meer annals probab}, Fourier-Laplace inversion of  (\ref{psi M}) gives, at least formally, the following equation
\begin{align}
\int _{S^{d-1}} \mu (dv) \l \partial_t -v\cdot \nabla _x \r ^\alpha  h(x,t)= h(x,0)\frac{t^{-\alpha}}{\Gamma (1-\alpha) }. \label{equazione limite}
\end{align}
To the best of our knowledge, \eqref{equazione limite} has never appeared. However, its one-dimensional version 
\begin{align} \label{equazione limite 1d}
\l \partial_t + \partial_x \r ^\alpha h(x,t) + \l \partial_t - \partial_x \r ^\alpha h(x,t) = 2h(x,0)\frac{t^{-\alpha}}{\Gamma (1-\alpha) } 
\end{align}
had already appeared in \cite{meer phys} in the study of one-dimensional continuous time random walks; our theory implies that \eqref{equazione limite 1d} governs the scaling limit of a one-dimensional transport process. Eq. \eqref{equazione limite 1d} was also studied in \cite{magdalevy} where the authors find explicit solutions.
\end{os}

We deal now with the component $\epsilon_c^\alpha$. Write $\epsilon (t)$ as
\begin{align}
\epsilon (t) = \l t-\tau_{N(t)} \r v_{N(t)+1} = \gamma (t) v_{N(t)+1}
\end{align}
where the process $\gamma (t)$ is the sojourn time in the current position of $V_\alpha(t)$. Since the r.v.'s $v_n$ are independent, $v_{N(t)+1}$ is a uniform r.v. on $\mathcal{S}^{d-1}$, independent from $\gamma (t)$, it follows that $\gamma (t) v_{N(t)+1}$ is a vector with norm $\gamma (t)$ and independent uniform orientation. In our process $\eqref{scaled process}$ it represents the last displacement of $X_\alpha(t)$ (after the last scattering). Therefore, in the limit, it converges to the last displacement of the process $A(t)$ with length $t-\sigma^\alpha(L^\alpha(t)-)$ and uniform orientation.

Consider the process 
\begin{align}
\epsilon_c^\alpha (t) = c^{-1/\alpha}\gamma (c^{1/\alpha} t) v_{N(c^{1/\alpha}t)+1}.
\end{align}
Since the orientation (velocity) is independent on $\gamma$ and its distribution is uniform independently on $c$ and $t$, when studying the convergence in distribution of $\epsilon^\alpha_c$ it is sufficient to study the norm $\gamma_c^\alpha (t):=c^{-1/\alpha}\gamma (c^{1/\alpha} t)$ separately. In the next result we show that $c^{-1/\alpha}\gamma(c^{1/\alpha }t)$, i.e., the length of the last scattering of the process $X_\alpha^c$, converges (in distribution) to the process 
\begin{align}
\gamma^\sigma(t):= t-\sigma^\alpha\l L^\alpha(t)- \r,
\end{align}
i.e., the current time passed from the last renewal of the process $A \l L^\alpha(t)- \r$ (see \cite[Section 2.1]{meerstra} for a thourough discussion on renewal times of semi-Markov processes.)
\begin{prop}
We have that $\gamma_c^\alpha (t)$ converges to $\gamma^\sigma(t)$ in distribution as $c \to \infty$ and therefore $\epsilon_c^\alpha (t)$ converges in distribution to $\gamma^\sigma(t) U$ where $U$ is an independent uniform random variable on $\mathcal{S}^{d-1}$.
\end{prop}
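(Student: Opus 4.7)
My plan is to reduce the statement to a CTRW-type undershoot convergence that follows from machinery already in place in the proof of Theorem \ref{teconvctrw}. First I would separate the length from the direction: by the construction of $V_\alpha$, the unit vector $v_{N(c^{1/\alpha}t)+1}$ is the $(N(c^{1/\alpha}t)+1)$-th draw from the i.i.d.\ sequence $\ll v_i \rr$, which is independent of $\ll J_i \rr$, so after conditioning on $N(c^{1/\alpha}t)$ it is uniformly distributed on $\mathcal{S}^{d-1}$ and independent of the waiting times, and in particular independent of $\gamma_c^\alpha(t)$. Consequently, once the univariate convergence $\gamma_c^\alpha(t)\Rightarrow \gamma^\sigma(t)$ is in hand, the joint convergence $\l \gamma_c^\alpha(t), v_{N(c^{1/\alpha}t)+1}\r\Rightarrow \l \gamma^\sigma(t), U\r$ with $U$ uniform on $\mathcal{S}^{d-1}$ independent of $\gamma^\sigma(t)$ follows from the product structure, and continuous mapping closes the gap, giving $\epsilon_c^\alpha(t)\Rightarrow\gamma^\sigma(t)U$.

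Second, I would rewrite
\begin{align*}
\gamma_c^\alpha(t) \, = \, t-c^{-1/\alpha}\tau_{N(c^{1/\alpha}t)},
\end{align*}
so the task reduces to the CTRW-undershoot convergence
\begin{align*}
c^{-1/\alpha}\tau_{N(c^{1/\alpha}t)}\Rightarrow \sigma^\alpha\l L^\alpha(t)-\r.
\end{align*}
To obtain this, I would start from the bivariate random-walk convergence $c^{-1/\alpha}\l \tau_{[cs]}, Y([cs])\r\overset{\text{fdd}}{\to}\l \sigma^\alpha(s), A(s)\r$, whose time marginal is the one-dimensional restriction of the computation \eqref{trovophi}, and upgrade it to $J_1$-Skorokhod convergence on $D\l [0,\infty);\mathbb{R}^+\times\mathbb{R}^d\r$ by the classical invariance principle for i.i.d.\ random walks attracted to a L\'evy process. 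I would then invoke \cite[Theorem 3.4]{meer annals probab}, the same result used in the proof of Theorem \ref{teconvctrw} to produce the position limit $A\l L^\alpha(t)-\r$; applied to the time coordinate it delivers exactly $c^{-1/\alpha}\tau_{N(c^{1/\alpha}t)}\Rightarrow \sigma^\alpha\l L^\alpha(t)-\r$ at each fixed $t>0$.

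The delicate point, which I expect to require the most care, is the verification that the limit is the left-limit $\sigma^\alpha\l L^\alpha(t)-\r$ rather than $\sigma^\alpha\l L^\alpha(t)\r$. Because $L^\alpha$ is constant on the (dense) family of jump intervals of $\sigma^\alpha$, the naive composition is $J_1$-discontinuous there and either choice is a priori plausible as a limit. The correct identification is forced by the defining inequality $\tau_{N(s)}\leq s<\tau_{N(s)+1}$, which rescales to $c^{-1/\alpha}\tau_{N(c^{1/\alpha}t)}\leq t<c^{-1/\alpha}\tau_{N(c^{1/\alpha}t)+1}$ and mirrors $\sigma^\alpha\l L^\alpha(t)-\r\leq t<\sigma^\alpha\l L^\alpha(t)\r$, so the undershoot side is the only one compatible with the finite-$c$ inequality. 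The Meerschaert--Scheffler theorem is built precisely to respect this undershoot interpretation, so the main work is to check its hypotheses (the driving walk has i.i.d.\ increments whose time marginal is in the strict domain of attraction of $\sigma^\alpha$, and the limiting subordinator is strictly increasing and pure-jump) and then to read off the time coordinate of the joint limit.
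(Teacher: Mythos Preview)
Your proposal is correct but follows a different route from the paper. The paper proves this proposition by a direct computation: it writes the exact distribution of $\gamma(t)$ in terms of the renewal density $\mathpzc{h}$ (via \cite[p.~61]{cox}), uses the explicit potential density of the stable subordinator to identify $\mathpzc{h}(w)=w^{\alpha-1}/\Gamma(\alpha)$, substitutes, rescales, and then passes to the limit using the tail asymptotic $\mathcal{E}_\alpha(-cw^\alpha)\sim (cw^\alpha\Gamma(1-\alpha))^{-1}$ under dominated convergence. The limiting survival function is then matched to the known law of $\sigma^\alpha(L^\alpha(t)-)$ from \cite[Lemma 1.10]{bertoins}, which yields the generalized arcsine distribution for $\gamma^\sigma(t)$.

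Your approach instead reduces to the undershoot convergence $c^{-1/\alpha}\tau_{N(c^{1/\alpha}t)}\Rightarrow\sigma^\alpha(L^\alpha(t)-)$ and invokes \cite[Theorem 3.4]{meer annals probab} on the time coordinate alone. This is exactly the mechanism the paper uses in the \emph{next} theorem (the joint convergence of $(Y_\alpha^c,\gamma_c^\alpha)$), so your argument is in effect a specialization of that later proof to the one-dimensional time marginal. What your route buys is brevity and a unified treatment with the main result; what the paper's explicit computation buys is the concrete density of $\gamma^\sigma(t)$ and a self-contained argument that does not rely on the CTRW limit theorem for this preliminary step. Both handle the direction variable $v_{N(c^{1/\alpha}t)+1}$ the same way you describe.
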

\begin{proof}
We prove the result under the assumption $\theta=1$, without loss of generality.
The proof can be conducted by computing directly the limit. The distribution of $\gamma (t)$ is given by (e.g., \cite[page 61]{cox})
\begin{align}
P \l \gamma (t) \in dw \r \, = \, \delta_{t}(dw) \mathcal{E}_\alpha(-t^\alpha) + \mathds{1}_{[w<t]} \mathpzc{h}(t-w) \, \mathcal{E}_{\alpha}(-w^\alpha) \, dw,
\end{align}
where $\mathpzc{h}(w)$ is the renewal density which can be computed explicitely by resorting to \eqref{potmeassub} and Theorem \ref{teconvctrw}. By combining the information we know indeed that
\begin{align}
\mathpzc{h}(dw) \, = \, \mathds{E}\int_0^\infty \mathds{1}_{[\sigma^\alpha(t) \in dw]}dt
\end{align}
where $\sigma^\alpha (t)$ is a stable subordinator. It is well known that for a stable subordinator one has that
\begin{align}
\mathds{E}\int_0^\infty \mathds{1}_{[\sigma^\alpha(t) \in dw]}dt \, = \, \frac{w^{\alpha-1}}{\Gamma(\alpha)
}dw.
\label{rendensex}
\end{align}
It follows that, for $z>0$,
\begin{align}
P \l c^{-1/\alpha} \gamma (c^{1/\alpha}t) >  z \r \, = \,& \mathds{1}_{[z \leq t]} \left[ \mathcal{E}_\alpha(-ct^\alpha)+   \int_{c^{1/\alpha}z}^{c^{1/\alpha}t}  \mathcal{E}_\alpha (-w^\alpha) \mathpzc{h}(c^{1/\alpha}t-w)dw \right].
\label{firspas}
\end{align}

Therefore we get from  \eqref{firspas}, using \eqref{rendensex} and after a change of variable,
\begin{align}
P \l c^{-1/\alpha} \gamma (c^{1/\alpha}t) >  z \r \, = \, & \mathds{1}_{[z \leq t]}  \left[  \mathcal{E}_\alpha(-ct^\alpha)+ c^{1/\alpha} \int_{z}^{t}  \mathcal{E}_\alpha (-cw^\alpha) \frac{\l c^{1/\alpha}(t-w) \r^{\alpha-1}}{\Gamma (\alpha)}dw \right] \notag \\
= \, &\mathds{1}_{[z\leq t]}\left[ \mathcal{E}_\alpha(-ct^\alpha)+  \frac{c}{\Gamma(\alpha)} \int_{z}^{t}  \mathcal{E}_\alpha (-cw^\alpha) (t-w)^{\alpha-1}dw \right].
\label{limitmarg}
\end{align}
Now, in order to compute the limit as $c \to \infty$, we use \eqref{511}. Indeed by repeatedly applying dominated convergence to the integral on the rhs of \eqref{limitmarg} (on the set $[z,t)$) we find
\begin{align}
\lim_{c\to \infty} P \l c^{-1/\alpha} \gamma (c^{1/\alpha}t) >  z\r \, = \, &\lim_{c \to \infty}  \mathds{1}_{[z \leq t]} \frac{c}{\Gamma (\alpha)} \int_z^t \frac{w^{-\alpha}}{c\Gamma(1-\alpha)} (t-w)^{\alpha-1}dw \notag \\
= \, & \mathds{1}_{[z\leq t]}\frac{1}{\Gamma (\alpha)\Gamma(1-\alpha)} \int_z^t  w^{-\alpha}(t-w)^{\alpha-1}dw.
\label{limitm}
\end{align}
By \cite[Lemma 1.10]{bertoins}, the distribution of $\sigma^\alpha \l L^\alpha (t)- \r$ is
\begin{align}
P \l \sigma^\alpha \l L^\alpha(t)- \r \in dw \r \, = \, \mathds{1}_{[0 < w < t]}\frac{1}{\Gamma  (\alpha) \Gamma (1-\alpha)} (t-w)^{-\alpha} w^{\alpha-1} dw,
\end{align}
whence $c^{-1/\alpha}\gamma(c^{1/\alpha}t) \stackrel{\text{d}}{\to}t-\sigma^\alpha\l L^\alpha (t)- \r$.
\end{proof}

So far, we dealt with the convergence in distribution of the two components $ Y_\alpha^c(t) \to A\l L^\alpha(t)- \r$ and $ \epsilon_c^\alpha(t) \to \gamma^\sigma (t)U$ separately. Now we show that their sum converges to the sum of the limit.
\begin{te}
We have that the process $X_\alpha^c(t)$ converges to $X_\infty (t):=A \l L^\alpha(t)- \r+\gamma^\sigma(t)U$, in the sense of one-dimensional distributions. Further the process $X_\infty (t)$ is superdiffusive with order $2$, i.e., $\mathds{E}\left\| X_\infty(t)\right\|_d^2 = Kt^2 $ for $K= \mathds {E} \left\|M(1) \right\|_d^2 +(1-\alpha)(2-\alpha)/2$.
\end{te}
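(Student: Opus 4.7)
The plan is to reduce the convergence statement to a joint weak limit for $(Y_\alpha^c(t),\gamma_c^\alpha(t))$ coupled with the independent direction of the last jump, and then to compute the second moment by exploiting isotropy.

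First, I rewrite the decomposition as
\begin{align*}
X_\alpha^c(t)-x \,=\, Y_\alpha^c(t)+\gamma_c^\alpha(t)\,v_{N(c^{1/\alpha}t)+1}.
\end{align*}
The direction $v_{N(c^{1/\alpha}t)+1}$ is uniform on $\mathcal{S}^{d-1}$ and, since the sequence $(v_i)_i$ is i.i.d.\ and independent of $(J_i)_i$, it is independent of the pair $(Y_\alpha^c(t),\gamma_c^\alpha(t))$ for every $c$. Next I want joint weak convergence
\begin{align*}
\bigl(Y_\alpha^c(t),\gamma_c^\alpha(t)\bigr) \,\xrightarrow{d}\, \bigl(A(L^\alpha(t)-),\,t-\sigma^\alpha(L^\alpha(t)-)\bigr).
\end{align*}
From \eqref{trovophi} in the proof of Theorem \ref{teconvctrw} we already have the joint Fourier--Laplace convergence $c^{-1/\alpha}(Y([ct]),\tau_{[ct]})\xrightarrow{d}(A(1),\sigma^\alpha(1))$, which implies joint functional convergence of the scaled random walk and its partial sums of waiting times to $(A(s),\sigma^\alpha(s))$. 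Applying the coupled CTRW limit theorem of \cite[Theorem 3.4]{meer annals probab} (in the same fashion already used to deduce $Y_\alpha^c(t)\xrightarrow{d}A(L^\alpha(t)-)$), and using the continuous mapping argument for the age process $\gamma^\sigma(t)=t-\sigma^\alpha(L^\alpha(t)-)$ of the limit subordinator, gives the required joint convergence.

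Combining these ingredients with the independence of $v_{N(c^{1/\alpha}t)+1}$ yields joint convergence
\begin{align*}
\bigl(Y_\alpha^c(t),\gamma_c^\alpha(t),v_{N(c^{1/\alpha}t)+1}\bigr) \,\xrightarrow{d}\, \bigl(A(L^\alpha(t)-),\,\gamma^\sigma(t),\,U\bigr),
\end{align*}
with $U$ independent of the first two coordinates. The continuous mapping theorem applied to $(a,b,u)\mapsto a+bu$ delivers $X_\alpha^c(t)\xrightarrow{d}X_\infty(t)$.

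For the second moment, by independence of $U$ from $(M(t),\gamma^\sigma(t))$ and the fact that $\mathds{E}U=\bm{0}$ while $\|U\|_d=1$,
\begin{align*}
\mathds{E}\left\| X_\infty(t)\right\|_d^2 \,=\, \mathds{E}\left\| M(t)\right\|_d^2 + 2\,\mathds{E}\!\left[\gamma^\sigma(t)\,\langle M(t),U\rangle\right] + \mathds{E}\!\left[(\gamma^\sigma(t))^2\right],
\end{align*}
and the cross term vanishes by conditioning on $(M(t),\gamma^\sigma(t))$ and using $\mathds{E}U=\bm{0}$. The self-similarity in Theorem \ref{teconvctrw}(4) gives $\mathds{E}\|M(t)\|_d^2=t^2\mathds{E}\|M(1)\|_d^2$. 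For the last term I use the explicit density of $\sigma^\alpha(L^\alpha(t)-)$ from \cite[Lemma 1.10]{bertoins} recalled in the previous proposition, so that $\gamma^\sigma(t)$ has density $y^{-\alpha}(t-y)^{\alpha-1}/(\Gamma(\alpha)\Gamma(1-\alpha))$ on $(0,t)$, and a Beta integral yields
\begin{align*}
\mathds{E}\!\left[(\gamma^\sigma(t))^2\right] \,=\, \frac{t^2\,B(3-\alpha,\alpha)}{\Gamma(\alpha)\Gamma(1-\alpha)} \,=\, \frac{(1-\alpha)(2-\alpha)}{2}\,t^2,
\end{align*}
using $\Gamma(3-\alpha)=(2-\alpha)(1-\alpha)\Gamma(1-\alpha)$ and $\Gamma(3)=2$. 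Summing gives the stated constant $K$.

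The main obstacle is the joint weak convergence of $(Y_\alpha^c(t),\gamma_c^\alpha(t))$: marginal convergence of the two pieces is already established separately in Theorem \ref{teconvctrw} and in the preceding proposition, but they are highly dependent (both functionals of the same limit Lévy pair), so one must invoke joint convergence of $(Y([ct]),\tau_{[ct]})$ and then ensure that the renewal-type mappings producing $A(L^\alpha(t)-)$ and $\gamma^\sigma(t)$ behave well enough under the CTRW limit theorem—this is where the framework of \cite{meer annals probab,meerstra} does the work for us.
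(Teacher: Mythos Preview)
Your approach is correct and matches the paper's: decompose into $Y_\alpha^c(t)+\gamma_c^\alpha(t)\,v_{N(c^{1/\alpha}t)+1}$, establish joint convergence of $(Y_\alpha^c(t),\gamma_c^\alpha(t))$ via the coupled CTRW framework of \cite{meer annals probab}, append the independent uniform direction, apply continuous mapping, and compute the second moment by isotropy plus the Beta law of $\gamma^\sigma(t)/t$. The one place where the paper is more explicit than your sketch is the joint convergence step: rather than invoking a ``continuous mapping argument for the age process'' (the age functional is not obviously continuous on Skorokhod space), the paper writes $(Y(N(t)),\gamma(t))=(\bm{0},t)+\sum_{i\le N(t)}(Y_i,-J_i)$, recognises this as a coupled CTRW in the augmented space $\mathbb{R}^d\times(-\infty,0)$, verifies $n^{-1/\alpha}(Y(n),-\tau_n,\tau_n)\xrightarrow{d}(A(1),-\sigma^\alpha(1),\sigma^\alpha(1))$, and then applies \cite[Theorem 3.4]{meer annals probab} directly to obtain $(Y_\alpha^c(t),\gamma_c^\alpha(t))\xrightarrow{d}(A(L^\alpha(t)-),\,t-\sigma^\alpha(L^\alpha(t)-))$ in one clean stroke.
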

\begin{proof}
The proof can be conducted by resorting again to \cite[Theorem 3.4]{meer annals probab}, as follows.
Consider the process $\l Y(N(t)), \, \gamma (t)\r$  in $\mathbb{R}^d\times (0,\infty)$, where $\gamma (t)=t-\tau_{N(t)}$. Note that
\begin{align*}
\l Y(N(t)), \, t-\tau _{N(t)} \r =(\bm{0},t)+ \sum _{i=1}^{N(t)} (Y_i, -J_i)
\end{align*}
has the form of a coupled continuous time random walk in $\mathbb{R}^d\times (-\infty, 0)$, plus a drift.
Take now the vector process $Z_n:= \l Y(n), -\tau_n \r$. In order to apply \cite[Theorem 3.4]{meer annals probab} we compute the Laplace-Fourier transform of $n^{-1/\alpha}\l Z(n), \tau_n \r$ by performing the same computation as in \eqref{trovophi}. We get indeed, for $\lambda \geq 0$, $\xi \in \mathbb{R}^{d}$, $k \in \mathbb{R}$,
\begin{align}
\mathds{E} e^{ n^{-1/\alpha}\l -\lambda \tau_n+i\xi \cdot Y(n)-ik\tau_n\r}  \, \to \, \exp \ll -\int_{S^{d-1}} \l \lambda +ik -i\xi \cdot v  \r^\alpha \mu(dv)\rr.
\end{align}
It follows that $n^{-1/\alpha}(Z_n, \tau_n) \to \l A(1), -\sigma^\alpha(1), \sigma^\alpha(1) \r$ and also
\begin{align}
c^{-1/\alpha}\l Z_{[ct]}, \tau_{[ct]} \r \stackrel{\text{d}}{\to} \l A(t), -\sigma^\alpha(t), \sigma^\alpha(t) \r.
\end{align}
Therefore, we have by \cite[Theorem 3.4]{meer annals probab} that 
\begin{align}
c^{-1/\alpha} \sum_{i=1}^{N(c^{1/\alpha}t)} \l Y_i, -J_i \r \stackrel{\text{d}}{\to}  \bigg( A\l L^\alpha(t)- \r,- \sigma^\alpha \l L^\alpha(t)- \r \bigg)
\end{align}
so that
\begin{align}
&c^{-1/\alpha} \l Y (N(c^{1/\alpha} t), c^{1/\alpha}t-\tau_{N(c^{1/\alpha}t)} \r \notag \\ = \,& (\bm{0},t)+ c^{-1/\alpha} \sum_{i=1}^{N(c^{1/\alpha}t)} \l Y_i, -J_i \r \notag \\ \stackrel{\text{d}}{\to} &  \, (\bm{0},t)+ \bigg( A\l L^\alpha(t)- \r,- \sigma^\alpha \l L^\alpha(t)- \r \bigg).
\end{align}
It follows that $\l Y_\alpha^c(t), \gamma_c^\alpha (t) \r \stackrel{\text{d}}{\to} \l A \l L^\alpha(t)- \r, \gamma^\sigma(t) \r$ and, equivalently, 
\begin{align}
\l Y_\alpha^c(t), \epsilon_c^\alpha (t) \r \stackrel{\text{d}}{\to}\l A \l L^\alpha(t)- \r, \gamma^\sigma(t)U \r.
\end{align}
By using  the continuous mapping theorem we have that the scaled transport process
\begin{align*}
X^c_\alpha (t) = c^{-1/\alpha} Y(N(c^{1/\alpha}t))+ c^{-1/\alpha} \gamma (c^{1/\alpha} t)U,
\end{align*}
is such that
\begin{align*}
X^c_\alpha (t) \overset{\text{d}}{\longrightarrow}               A(L^\alpha(t)-) + \bigl (t-\sigma^\alpha (L^\alpha(t)-)  \bigr)U
\end{align*}
for each $t\geq 0$.

The asymptotic behaviour of $X_\infty (t)$ can be obtained as follows: we have that
\begin{align}
\mathds{E} \left\| X_\infty (t) \right\|^2_d \, = \, \mathds{E} \left\| M(t)+\gamma^\sigma (t) U \right\|_d^2  \, = \,  &\mathds{E} \left\| M(t) \right\|_d^2 + \mathds{E} \left\| \gamma^\sigma (t) U \right\|_d^2
\label{finalmenteforse}
\end{align}
since $U$ is uniform, independent on $\gamma^\sigma$ and $M(t)$ and has zero expectation (with all its marginals). 
It is clear from \eqref{limitm} that $\gamma^\sigma(t) \stackrel{\text{d}}{=} t \beta$ where $\beta$ is a Beta r.v. with parameters $\alpha$ and $1-\alpha$. It follows from \eqref{finalmenteforse}, using Theorem \ref{teconvctrw}, that
\begin{align}
\mathds{E} \left\| M(t)+\gamma^\sigma (t) U \right\|_d^2  \, = \, t^2 \mathds{E} \left\| M(1) \right\|_d^2 + t^2 \mathds{E} \beta^2 = t^2 \l\mathds{E} \left\| M(1) \right\|_d^2 + (1-\alpha)(2-\alpha)/2\r.
\end{align}
\end{proof}
\begin{os} \normalfont
It is interesting to note that the proof of previous theorem could be conducted explicitely, computing directly the limit on the distribution of $\l Y_\alpha^c(t), \gamma_c^\alpha (t) \r$. However, since some computations  are cumbersome, we outline here only the main parts of it in order to specify the explicit distributions.
We have  that
\begin{align}
P \l Y(t) \in dx, \gamma(t) \in ds \r \, = \, \delta_t(ds) \mu_t(dx) \mathcal{E}_\alpha(-t^\alpha) + \mathds{1}_{[s<t]}\mathds{1}_{[\left\|x \right\|_d< t-s]} \mathpzc{h}(dx, t-ds) \mathcal{E}_\alpha(-s^\alpha),
\label{joint}
\end{align}
where $\mu_t(dx)$ is the uniform measure on $\mathcal{S}^{d-1}_t$ and
\begin{align}
\mathpzc{h}(dx, ds) \, : = \, \sum_{n=1}^\infty P\l \tau_n \in ds, Y(n) \in dx \r
\label{defrenjoint}
\end{align}
reprents the probability to have a jump of the process $Y(t)$ at time $t-s$ which ends at $x$. The last term in \eqref{joint} can be justified by observing that
\begin{align}
\mathpzc{h}(dx, t-ds) \mathcal{E}_\alpha(-s^\alpha) \, = \, & \sum_{n=1}^\infty P\l \tau_n \in t- ds, Y(n) \in dx \r P \l J_1 >s \r \notag \\
= \, & \sum_{n=1}^\infty P\l \tau_n \in t-ds, Y(n) \in dx \r  P \l \tau_{n+1}-\tau_n>s \r  \notag \\
= \, & P \l \bigcup_n \ll \tau_n \in t- ds, Y(n) \in dx, \tau_{n+1}-\tau_n>s \rr \r.
\label{555}
\end{align}
Use the notation $\mathpzc{h}(x,s)dxds=\mathpzc{h}(dx,ds)$ since it is clear from \eqref{555} that $\mathpzc{h}(dx,ds)$ is absolutely continuous on $\left\| x \right\|_d<s$.
It follows that, for $w \in \mathbb{R}^d$, $\epsilon >0$,
\begin{align}
&P \l Y_\alpha^c (t) > w, \gamma_c^\alpha (t) > \epsilon \r \notag \\
 = \,&\mathds{1}_{[\left\| w \right\|_d<t-\epsilon, \epsilon < t]}\left[ \mathcal{E}_\alpha \l -cs^\alpha \r +\int_{c^{1/\alpha}\epsilon}^{c^{1/\alpha}t} \int_{x>c^{1/\alpha}w, \left\| c^{1/\alpha} x\right\|_d < c^{1/\alpha} t-s} \mathpzc{h}(x, c^{1/\alpha}t-s) \mathcal{E}_\alpha (-s^\alpha) \, dx \,  ds \right] \notag \\
= \, &\mathds{1}_{[\left\| w \right\|_d<t-\epsilon, \epsilon < t]}\left[ \mathcal{E}_\alpha \l -cs^\alpha \r +c^{2/\alpha}\int_{\epsilon}^{t} \int_{x>w,\left\| x \right\|_d < t-s} \mathpzc{h}(c^{1/\alpha}x,c^{1/\alpha}(t-s)) \mathcal{E}_\alpha (-cs^\alpha)dx \, ds \right].
\label{joinbl}
\end{align}
By a simple argument, using \eqref{defrenjoint} and \eqref{trovophi}, we get that $\mathpzc{h}(c^{1/\alpha}x,c^{1/\alpha}t) \sim u(c^{1/\alpha}x,c^{1/\alpha}s)$, as $c \to \infty$, where $u^\alpha(x,s)$ is the potential density
\begin{align}
u^\alpha(x,s) dx ds :=\mathds{E}\int_0^\infty \mathds{1}_{[A(t) \in dx, \sigma^\alpha (t) \in ds]} dt.
\end{align}
Use this estimation together with \eqref{511}, applying dominated convergence properly to get that, as $c \to \infty$,
\begin{align}
&\lim_{c \to \infty}P \l Y_\alpha^c (t) > w, \gamma_c^\alpha (t) > \epsilon \r \notag \\ = \, &\mathds{1}_{[\left\| w \right\|_d<t-\epsilon, \epsilon < t]}\lim_{c \to \infty} c^{2/\alpha-1} \int_\epsilon^t \int_{x>w,\left\| x \right\|_d < t-s} u(c^{1/\alpha}x, c^{1/\alpha }(t-s)) \frac{s^{-\alpha}}{\Gamma (1-\alpha)} \, dx \, ds \notag \\
= \, & \mathds{1}_{[\left\| w \right\|_d<t-\epsilon, \epsilon < t]}\int_\epsilon^t \int_{x>w,\left\| x \right\|_d < t-s} u(x, t-s) \frac{s^{-\alpha}}{\Gamma (1-\alpha)}  \, dx \, ds
\label{jointlf}
\end{align}
where, in the last step we used self-similarity of $\l A(t), \sigma^\alpha (t) \r$, to say that $u(c^{1/\alpha}x, c^{1/\alpha} s) = c^{1-2/\alpha} u(x,s)$. The distribution in \eqref{jointlf} coincides with the distribution of $\l M(t), \gamma^\sigma (t) \r$ which was obtained in \cite[Remark 4.2]{meerstra}. To check this, observe that
\begin{align}
K \l  \mathbb{R}^d \times [s, \infty) \r \, = \, \frac{s^{-\alpha}}{\Gamma (1-\alpha)}.
\end{align}
\end{os}
\begin{os} \normalfont
It is interesting to note that the theory of CTRW limit \cite{meerstra} states that the
\begin{align}
c^{-1/\alpha}\sum_{i=1}^{N(c^{1/\alpha}t)} J_iv_i \to A \l L(t)- \r
\end{align}
while
\begin{align}
c^{-1/\alpha}\sum_{i=1}^{N(c^{1/\alpha}t)+1} J_iv_i \to A \l L(t) \r
\end{align}
in distribution. Further both processes have discontinuous paths. The process $X_\infty (t)$ instead has continuous paths: the particle is moving, for any $t$, from the point $A \l L(t) -\r$ to the point $A \l L(t) \r$ where a new displacement will be performed.
\end{os}

\subsection{On the telegraph process: an hyperbolic-type equation}
In the case $d=1$, the isotropic transport process $(X(t), V(t))$ takes values in $\mathbb{R}\times \{+1,-1\}$. In compact form, it can be defined by
\begin{align}
V(t)= V_0(-1)^{N(t)} \qquad X(t)= x+V_0\int _0^t (-1)^{N(\tau)}d\tau,
\end{align}
where $V_0$ takes values in $\{1,-1\}$ and $N(t)$ denotes the number of renewals up to time $t$.

The Markov case, i.e., the case where $N(t)$ is a Poisson process, is usually called telegraph process and has been widely studied in the literature (consult, e.g., \cite{goldstein, kac, ratanov} and \cite[Chapter 1]{Pinsky}); 
 it is useful to specify that such a  process can be constructed in two ways which are equivalent in terms of governing equations:

\textit{i)}   At random times governed by a Poisson process with intensity $2\theta$, the particle can either continue to move in the same direction or it can reverse direction with probability  $1/2$.

\textit{ii)} At random times governed by a Poisson process with intensity $\theta $, the particle reverses direction with probability 1.

These constructions are equivalent, in the sense that the semigroup of $(X(t), V(t))$ exhibits in both cases the same infinitesimal generator
\begin{align*}
G h(x, v)= v {\partial_ x} h(x, v)   + \theta \biggl  (h(x, -v) - h(x, v) \biggr ), \qquad   v=\pm 1.
\end{align*}

Moreover, concerning the Markov case, it is well known (see e.g. \cite{kac}) that the density of the continuous component of $X(t)$, say $p_t(z|x)= \partial_z P(X(t)<z|X(0)=x)$ for $|z-x|<t$, is the fundamental solution of the damped wave equation
\begin{align}
\partial _t ^2 p_t(z|x)=  \partial_ z^2 p_t(z|x)-2\theta \partial_ t p_t(z|x) \label{damped markov}
\end{align}
under the initial conditions $p_0(z|x)= \delta (z-x)$ and $\partial _t p_t(z|x)|_{t=0}=0$.
Due to homogeneity and isotropy, $p_t(z|x)$ depends on $z$ and $x$ through their difference, so (\ref{damped markov}) is equivalent to 
\begin{align}
\partial _t ^2 p_t(z|x)=  \partial_ x^2 p_t(z|x)-2\theta \partial_ t p_t(z|x) \label{damped markov 2}
\end{align}

We derive here, heuristically, a generalization of \eqref{damped markov 2} holding for the telegraph process with Mittag-Leffler waiting times (i.e., the one-dimensional version of $\l X_\alpha (t), V_\alpha (t) \r$ of previous section), where the related renewal counting process $N(t)$ is the so-called fractional Poisson process (consult, e.g., \cite{orsbegh, meerpoisson}). The forthcoming derivation is the heuristic version of the general case of section \ref{secdw}.
Indeed, consider the equation \eqref{backtelfrac} on the open set $|z-x|<t$ and sum both members in the variable $w \in \ll -1,1 \rr$; then  such equation splits into
\begin{align}
 \l \partial_t- \partial_x \r^\alpha p_t(z|x,1)&= \theta \bigl (p_t(z|x,-1)-p_t(z|x,1) \,\bigr ) \notag \\ 
\l \partial_t +\partial_x \r^\alpha p_t(z|x,-1)&= \theta \bigl (p_t(z|x,1)-p_t(z|x,-1) \bigr ) \label{due}
\end{align}
where $p_t(z|x,v):=\partial_z P(X(t)<z|X(0)=x, V(0)=v)$, with $v=\pm1$.
Now, let 
\begin{align*}
\l \partial_t^2 -  \partial_x^2 \r ^\alpha := \l \partial_t -\partial_x \r^\alpha      \l\partial_t+\partial_x \r^\alpha
\end{align*}
be the fractional version of the D'Alembert operator (see \cite[pages 554 -- 555]{samko}), having Fourier-Laplace symbol
\begin{align}
(\lambda+i\xi)^\alpha (\lambda -i\xi)^\alpha = (\lambda ^2+\xi^2)^\alpha.
\end{align}
Using that $p_t(z|x)= \frac{1}{2} \bigl (p_t(z|x,-1)+p_t(z|x,1) \bigr )$, by  simple algebraic manipulations,  equations \eqref{due} can be re-arranged as
\begin{align}
\l \partial_t^2-   \partial_x^2 \r^\alpha p_t(z|x) = -\theta \l \partial_t -\partial_x \r^\alpha p_t(z|x)- \theta \l \partial_t + \partial_x \r^\alpha p_t(z|x) \label{equazione dalambertiano},
\end{align}
which is the fractional version of   (\ref{damped markov 2})   and          formally reduces to   (\ref{damped markov 2})     when $\alpha =1$.
\begin{os} \normalfont
In the paper \cite{orsbegh} the authors studied another random flight driven by a fractional Poisson process, hence having Mittag-Leffler waiting times. However, such a process is obtained by the time-change of the position process and thus it strongly differs, e.g., pathwise, from our process $X_\alpha (t)$. \revbt{The reader should compare the results in this section with \cite{fedotov} where the author derives the governing equation for the probability density function of a classical L\'evy walk. It turns out that this equation involves a classical wave
operator together with memory integrals (induced by the spatiotemporal coupling) and therefore it can be viewed as an alternative to \eqref{equazione dalambertiano}}.
\end{os}

\section*{Acknowledgements}
The authors are grateful to an anonymous referee, whose remarks and suggestions have improved a previous draft of the paper.

The author Bruno Toaldo is partially supported by Gruppo Nazionale per l’Analisi Matematica, la Probabilit\`{a} e le loro Applicazioni (GNAMPA-INdAM).

\vspace{1cm}
\end{document}